
\documentclass[10pt]{article}

\usepackage{amssymb, amsmath}
\usepackage{amsthm}
\usepackage{graphics}
\usepackage{amsfonts}
\usepackage{mathrsfs}
\usepackage{amscd}
\usepackage{comment}
\usepackage{color}
\usepackage{url}
\usepackage[normalem]{ulem}


\newcommand{\bN} { {\mathbb{N}}}

\newcommand{\bQ} { {\mathbb{Q}}}
\newcommand{\bZ} { {\mathbb{Z}}}





\newcommand{\ok} { {\overline{K}}}

\newcommand{\disp}{\operatorname{disp}}
\newcommand{\qdisp}{\operatorname{qdisp}}

\newcommand{\cres}{\operatorname{cres}}
\newcommand{\dres}{\operatorname{dres}}
\newcommand{\qres}{\operatorname{qres}}

\theoremstyle{definition}
\newtheorem{theorem}{Theorem}[section]
\newtheorem{prop}[theorem]{Proposition}

\newtheorem{lemma}[theorem]{Lemma}

\newtheorem{remark}[theorem]{Remark}
\newtheorem{define}[theorem]{Definition}
\newtheorem{example}[theorem]{Example}

\begin{document}
\title{Residues and Telescopers for
Rational Functions\thanks{This work is supported by the NSF grant CCF-1017217.}}
\author{Shaoshi Chen and Michael F.\ Singer\footnote{Email Addresses:~{\tt schen21@ncsu.edu}~(S.\ Chen),
{\tt singer@math.ncsu.edu}~(M.\ F.\ Singer).}\, \footnote{Corresponding Author: Michael F. Singer, Department of Mathematics,
North Carolina State University, Box 8205, Raleigh, NC 27695-8205,
{\tt singer@math.ncsu.edu}, Telephone:  919 515 2671, Fax:  919 515 3798} \\\\
Department of Mathematics, \\North Carolina
State University,\\ Raleigh, NC 27695-8205, USA}
%

\date{\today}
\maketitle
\begin{center} {\em Dedicated to the memories of Philippe Flajolet and Herbert S.~Wilf}
\end{center}

\vspace{.1in}

\begin{abstract}
We give necessary and sufficient conditions for the existence of telescopers
for rational functions of two variables in the continuous,  discrete and~$q$-discrete
settings and characterize which operators can occur as telescopers.
Using this latter characterization, we reprove results of Furstenberg and
Zeilberger concerning diagonals of power series representing rational functions.
The key concept behind these considerations is a generalization of the notion of
residue in the continuous case to an analogous concept in the  discrete and~$q$-discrete cases.
\end{abstract}

\begin{center}
\begin{minipage}[t]{10.5cm}
{\it Key Words:} Residues,  Telescopers, Zeilberger's Method\end{minipage}
\end{center}

\section{Introduction}\label{SECT:intro}
Residues have played a ubiquitous and important role in mathematics and
their use in combinatorics has had a lasting impact~(e.g., \cite{Flajolet_Sedgewick}).
In this paper we will show how the notion of residue and its generalizations lead
to new results and  a recasting of known results concerning telescopers in the continuous,
 discrete and~$q$-discrete cases.

As an introduction to our point of view and our results,
let us consider the  problem of finding a differential
telescoper for a rational function of two variables.
Let~$k$ be a field of characteristic zero,~$k(t,x)$ the
field of rational functions of two variables
and~$D_t = \partial/\partial_t$ and~$D_x=\partial/\partial_x$
the usual derivations with respect to~$t$ and~$x$,  respectively. Given~$f \in k(t,x)$,
we wish to find a nonzero operator  $L \in k(t)\langle D_t\rangle$, the
ring of linear differential operators in~$D_t$ with coefficients
in $k(t)$, and an element~$g \in k(t,x)$ such that~$L(f) = D_x(g)$.
We may consider~$f$ as an element of~$\overline{K}(x)$ where~$\overline{K}$ is
the algebraic closure of~$K = k(t)$.  As such, we may write
\begin{equation} \label{eqn0}
  f = p + \sum_{i=1}^m \sum_{j=1}^{n_i}\frac{\alpha_{i,j}}{(x-\beta_i)^j},
\end{equation}
where $p\in {K}[x]$, the $\beta_i$ are the roots in $\overline{K}$ of
the denominator of $f$ and the $\alpha_{i,j}$ are in~$\overline{K}$.
Note that the element $\alpha_{i,1}$ is the usual \emph{residue} of~$f$ at~$\beta_i$.
Using Hermite reduction (\cite[p.~39]{BronsteinBook} {or Section~\ref{SUBSECT:cres} below}),
one sees that a rational function $h \in K(x)$ is of the form $h = D_x(g)$ for some $g \in
K(x)$ if and only if all residues of $h$ are zero.  Therefore to find a
telescoper for $f$ it is enough to find a nonzero operator $L \in K\langle
D_t \rangle$ such that $L(f)$ has only zero residues.  For example assume that
$f$ has only simple poles, i.e., $f = \frac{a}{b}, a,b \in K[x]$, $\deg_xa <
\deg_x b$ and $b$ squarefree.  We then know that the Rothstein-Trager resultant
\cite{Trager1976, Rothstein1977}
\[
  R := {\rm resultant}_x(a-zD_x(b), b)\in K[z]
\]
is a polynomial whose roots are the residues at the poles of~$f$.  Given a
squarefree polynomial in $K[z]=k(t)[z]$, differentiation with respect to $t$ and
elimination allow one to construct a nonzero linear differential operator $L \in
k(t)\langle D_t \rangle$ such that $L$ annihilates the roots of this polynomial.
Applying $L$ to each term of \eqref{eqn0} one sees that $L(f)$ has zero residues
at each of its poles. Applying Hermite reduction to $L(f)$ allows us to find a
$g$ such that $L(f) = D_x(g)$.

The main idea in the method described above is that nonzero residues are the
obstruction to being the derivative of a rational function and one constructs a
linear operator to remove this obstruction. This idea is the basis of  results in
\cite{CKS2012} where it is shown that the problem of finding differential telescopers
for rational functions in~$m$ variables is equivalent to the problem of finding telescopers for
algebraic functions in~$m-1$ variables and where a new algorithm for finding
telescopers for algebraic functions in two variables is given.

 For a precise problem description, let~$k(t,x)$ be as above and~$D_t$ and~$D_x$ be the
derivations defined above.  We define shift operators~$S_t$ and~$S_x$
as
\[S_t(f(t,x)) = f(t+1,x) \quad  \text{and} \quad S_x(f(t,x)) = f(t, x+1)\]
and~$q$-shift operators (for~$q\in k$ not a root of unity)~$Q_t$ and~$Q_x$ as
\[Q_t(f(t,x) = f(qt,x)\quad  \text{and} \quad Q_x(f(t,x)) = f(t,qx)).\]
Let~$\Delta_x$ and~$\Delta_{q, x}$ denote the difference
and~$q$-difference operators~$S_x-1$ and~$Q_x-1$, respectively.  In this paper,
we give a solution to the following problem
\begin{center}
\begin{minipage}[t]{10cm}
\underline{Existence Problem for Telescopers.}\,\,
{\it For any~$\partial_t\in \{D_t, S_t, Q_t\}$ and~$\partial_x\in \{D_x, \Delta_x, \Delta_{q,x}\}$
find necessary and sufficient conditions on elements~$f \in k(t,x)$ that guarantee the existence
of a nonzero linear operator~$L(t,\partial_t)$ in~$\partial_t$ with coefficients in~$k(t)$
(a {\emph{telescoper}}) and an element~$g \in k(t,x)$ (a {\emph{certificate}}) such that
\[L(t,\partial_t)(f) = \partial_x(g).\]}
\end{minipage}
\end{center}
\vspace{-0.3cm}

As we have shown above, when~$\partial_t = D_t$
and~$\partial_x = D_x$, a telescoper and certificate exist for
any~$f\in k(t,x)$. This is not necessarily true in the other cases.
In the case when~$\partial_t = S_t$ and~$\partial_x = \Delta_x$,
Abramov and Le~\cite{AbramovLe2002} showed that there is no telescoper
for the rational function~$1/(t^2+x^2)$ and  presented a necessary and sufficient
condition for the existence of telescopers. Later, Abramov gave a general criterion
for the existence of telescopers for hypergeometric terms~\cite{Abramov2003}.
The~$q$-analogs were achieved in the works by Le~\cite{Le2001} and
by Chen et al. \cite{Chen2005}. Our approach in this paper represents
a unified way of solving the Existence Problem for Telescopers (for rational functions)
in these and the remaining cases.
In particular, we will first identify in each case the appropriate notion
of ``residues''  which will be elements of $\overline{k(t)}$, the algebraic
closure of~$k(t)$. We will show that for any~$f \in k(t,x)$
and~$\partial_x \in \{D_x, \Delta_x, \Delta_{q,x}\}$, there exists a~$g \in k(t,x)$
such that~$f = \partial_x(g)$ if and only if all the ``residues'' vanish.
We will then show that to find a telescoper, it is necessary and sufficient to
find an operator~$L(t,\partial_t)$ that annihilates all of the residues.

This necessary and sufficient condition has several applications.
For example, our results reduce the Existence Problem for Telescopers to the problem
of finding necessary and sufficient conditions that guarantee the existence
of operators that annihilate algebraic functions and we present a solution to
this latter problem.  Our approach also gives
termination criteria for the Zeilberger method~\cite{Almkvist1990, Zeilberger1990, Zeilberger1991} and
also a strategy for finding telescopers and certificates, which has been successfully used
in the continuous case in~\cite{CKS2012}.
In addition, these criteria  together with the
results in~\cite{Hardouin2008, Schneider2010} can be used to determine
if indefinite sums and integrals satisfy (possibly nonlinear) differential equations (see Example~\ref{EX:transcendental}).

The rest of the paper is organized as follows. In Section~\ref{SECT:residues} we
define the notions of  residues relevant to the discrete and~$q$-discrete
cases and show that for any~$f \in k(t,x)$ and~$\partial_x \in \{D_x, \Delta_x, \Delta_{q,x}\}$,
there exists a~$g \in k(t,x)$ such that~$f = \partial_x g$ if and only if all the residues vanish.
In Section~\ref{SECT:algfuns} we characterize those algebraic functions in~$\overline{k(t)}$ for
which there exist annihilating linear operators~$L(t,S_t)$ or $L(t,Q_t)$ as well as prove some
ancillary results useful in succeeding sections. In Section~\ref{SECT:telescoper}, we solve the
Existence Problem for Telescopers as well as characterize when a linear operator is a telescoper.
Using this latter characterization, we can give a proof, using our approach, of the theorem
of Furstenberg~\cite{Furstenberg1967} stating that the diagonal of a rational power series in
two variables is an algebraic function.  We also discuss a recent example
of Ekhad and Zeilberger~\cite{EZ2011} in the context of the results of this paper.
The final Appendix contains proofs of the characterizations stated in Section~\ref{SECT:algfuns}.

\section{Residues}\label{SECT:residues}
Let~$K$ be a field of characteristic zero and~$K(x)$ be the field of rational functions in~$x$ over~$K$.
Let~$\overline{K}$ denote the algebraic closure of~$K$. Let~$q\in K$ be such that~$q^i\neq 1$ for
any nonzero~$i\in \bZ$,
i.e., $q$ is not a root of unity.  As in the Introduction, we define the derivation~$D_x$, shift operator~$S_x$,
and~$q$-shift operator~$Q_x$ on~$K(x)$, respectively, as
\[D_x(f(x))=\frac{d(f(x))}{dx},  \quad S_x(f(x))= f(x+1), \quad \text{and}\quad Q_x(f(x))=f(qx) \]
for all~$f\in K(x)$. Let~$\Delta_x$ and~$\Delta_{q, x}$ denote the difference and~$q$-difference
operators~$S_x-1$ and~$Q_x-1$, respectively.
A rational function~$f\in K(x)$ is said to be
\emph{rational integrable} (resp.\ \emph{summable, $q$-summable}) in~$K(x)$ if
there exists~$g\in K(x)$
such that~$f=D_x(g)$ (resp.\ $f=\Delta_x(g)$, $f = \Delta_{q, x}(g)$). This section is motivated
by the well known result (Proposition~\ref{PROP:ratint} below)  that characterizes rational integrability
in terms of vanishing residues. In the remainder of this section we describe other types
of ``residues'' and how they can be used to give necessary and sufficient conditions for summability
and~$q$-summability.

\subsection{Continuous residues}\label{SUBSECT:cres}
Let~$f=a/b\in K(x)$ with~$a, b\in K[x]$ and~$\gcd(a, b)=1$. Then~$f$ can be uniquely
written in its partial fraction decomposition
\begin{equation}\label{EQ:cparfrac}
f = p + \sum_{i=1}^m \sum_{j=1}^{n_i} \frac{\alpha_{i,j}}{(x-\beta_i)^j},
\end{equation}
where~$p\in K[x]$, { $m, n_i\in \bN$, $\alpha_{i,j}, \beta_i\in \overline{K}$, and~$\beta_j$'s are roots of~$b$.
From any of the usual proofs of partial fraction decompositions, one sees that all the~$\alpha_{i, j}$'s are
in~$K(\beta_1, \ldots, \beta_m)$.
\begin{define}[Continuous residue]\label{DEF:cres}
Let~$f\in K(x)$ be of the form~\eqref{EQ:cparfrac}.
The value~$\alpha_{i,1}\in \ok$ is called the \emph{continuous residue}
of~$f$ at~$\beta_i$ (with respect to~$x$), denoted by~$\cres_x(f, \beta_i)$.
\end{define}
\noindent
Note that the continuous residue is just the usual residue  in complex analysis.
We will define other kinds of residues below but  when we refer to a residue
without further modification, we shall mean the continuous residue.
Although the following is well known (see~\cite[Proposition 2.1]{VanDerPut2001})
we include it since this result is the motivation and model for the considerations that follow.
\begin{prop}\label{PROP:ratint}
Let~$f=a/b\in K(x)$ be such that~$a, b\in K[x]$ and~$\gcd(a, b)=1$.
Then~$f$ is rational integrable in~$K(x)$ if and only if the residue~$\cres_x(f, \beta)$
is zero for any root~$\beta\in \ok$ of~$b$.
\end{prop}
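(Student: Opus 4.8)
The plan is to use the partial fraction decomposition \eqref{EQ:cparfrac} together with the explicit action of $D_x$ on each simple term, reducing the statement to a calculation of residues term by term.

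First I would establish the ``if'' direction via Hermite reduction, exactly as sketched in the Introduction. Writing $f$ in the form \eqref{EQ:cparfrac}, the polynomial part $p$ is trivially integrable. For the terms with $j \geq 2$, note that $\frac{\alpha_{i,j}}{(x-\beta_i)^j} = D_x\!\left(\frac{-\alpha_{i,j}}{(j-1)(x-\beta_i)^{j-1}}\right)$, so each such term is a derivative of a rational function in $\ok(x)$. Hence every term except the $\alpha_{i,1}/(x-\beta_i)$ terms can be integrated within $\ok(x)$. The hypothesis that all residues $\alpha_{i,1}$ vanish then says precisely that these remaining obstructions disappear, so $f = D_x(g)$ for some $g \in \ok(x)$. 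The one subtlety is descent: I must check that $g$ can be taken in $K(x)$ rather than merely $\ok(x)$. This follows because the whole construction is Galois-equivariant over $K$ --- the decomposition permutes the $\beta_i$ and their coefficients under $\mathrm{Gal}(\ok/K)$, and $f$ is $K$-rational --- so the constructed antiderivative is fixed by the Galois action and lies in $K(x)$.

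For the ``only if'' direction I would argue that residues are intrinsic and that $D_x(g)$ always has vanishing residue at every point. The cleanest route is the characterization of the residue as a linear functional invariant under the derivation: if $g \in \ok(x)$ has partial fraction decomposition with terms $\gamma/(x-\beta)^r$, then $D_x$ of such a term equals $-r\gamma/(x-\beta)^{r+1}$, whose coefficient of $(x-\beta)^{-1}$ is $0$. Thus $\cres_x(D_x(g),\beta)=0$ for every $\beta\in\ok$. Since $f = D_x(g)$, all residues of $f$ vanish. Equivalently, one can phrase this as the statement that the residue map $\cres_x(\cdot,\beta)$ annihilates the image of $D_x$, which is immediate from the power-$(-1)$ coefficient calculation above.

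The main obstacle I anticipate is purely bookkeeping rather than conceptual: organizing the argument over $\ok$ while guaranteeing that the certificate $g$ descends to $K(x)$. Since the proposition asserts integrability \emph{in $K(x)$}, one cannot simply work over the algebraic closure and declare victory; one must verify that clearing the nonzero-residue obstruction does not force the antiderivative out of $K(x)$. As noted, the Galois-invariance of the partial fraction data handles this, but it is the step that requires care. Everything else reduces to the elementary observation that $D_x$ lowers the coefficient of $(x-\beta)^{-1}$ to zero, which is the model calculation the authors signal will recur for the difference and $q$-difference residues in the sections that follow.
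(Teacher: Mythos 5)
Your proposal is correct and follows essentially the same route as the paper's proof: term-by-term differentiation of the partial fraction decomposition for the ``only if'' direction, and term-by-term integration of the $j\geq 2$ terms for the ``if'' direction. The only cosmetic difference is that you justify the descent of $g$ to $K(x)$ by Galois equivariance of the partial fraction data, while the paper phrases the same fact as the sum being a symmetric function of the conjugate roots of each irreducible factor of $b$.
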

\begin{proof}
Suppose that~$f$ is rational integrable in~$K(x)$, i.e.,\ $f= D_x(g)$ for some~$g$ in~$K(x)$.
Writing~$g$ in its partial fraction decomposition and differentiating each term,
one sees that all the residues of~$D_x(g)$ are~$0$. Conversely, if all residues
of~$f$ at its poles are zero, then~$f$ can be written as
\[f = p + \sum_{i=1}^m \sum_{j=2}^{n_i} \frac{\alpha_{i,j}}{(x-\beta_i)^j},\]
where~$p\in K[x]$, $\alpha_{i,j}, \beta_i\in \overline{K}$, and~$\beta_j$'s are roots of~$b$.
Note that any polynomial is rational integrable in~$K(x)$, and for all~$i, j$ with~$1\leq i\leq m$ and~$2\leq j\leq n_i$,
\[\frac{\alpha_{i,j}}{(x-\beta_i)^j} = D_x\left(\frac{(1-j)^{-1}\alpha_{i, j}}{(x-\beta_i)^{j-1}}\right).\]
Then~$f=D_x(g)$, where~$g$ is of the form
\[g = \tilde{p} + \sum_{i=1}^m \sum_{j=2}^{n_i} \frac{(1-j)^{-1}\alpha_{i, j}}{(x-\beta_i)^{j-1}} \quad \text{for some~$\tilde{p}\in K[x]$.}\]
For each irreducible factor $p$ of~$b$, the sum in~$g$ is a symmetric function of
those~$\beta_i$'s that are roots of~$p$.  From this one concludes that $g$ lies in~$K(x)$. Thus,~$f$ is rational
integrable in~$K(x)$.
\end{proof}

\subsection{Discrete residues}\label{SUBSECT:dres}
Given a rational function, Matusevich~\cite{Matusevich2000}
found a necessary and sufficient condition
for its rational summability. Moreover, one can algorithmically
decide whether a rational function is
rational summable or not using methods
in~\cite{Abramov1975, Abramov1989,
AbramovYu1991, Abramov1995, Abramov1995b, Paule1995b, Pirastu1995a, Pirastu1995b}.
Here, we present a rational summability criterion
via a discrete analogue of residues. To this end, we first recall some terminology
from~\cite{Abramov1975, Paule1995b}  and~\cite[Chapter 2]{vdPutSinger1997}.

For an element~$\alpha \in \ok$, we call the subset~$\alpha + \bZ$ the~\emph{$\bZ$-orbit} of~$\alpha$
in~$\ok$, denoted by~$[\alpha]$. For a polynomial~$b\in K[x]\setminus K$, the value
\[\max\{i\in \bZ \mid \text{$\exists \, \alpha, \beta \in \ok$
such that~$i=\alpha-\beta$ and~$b(\alpha)=b(\beta)=0$}\}\]
is called the~\emph{dispersion} of~$b$ with respect to~$x$,
denoted by~$\disp_x(b)$. The polynomial~$b$ is said to
be~\emph{shift-free} with respect to~$x$ if~$\disp_x(b)=0$.
Let~$f=a/b\in K(x)$ be such that~$a, b\in K[x]$ and~$\gcd(a, b)=1$.
Over the field~$\ok$, $f$ can be decomposed into the form
\begin{equation}\label{EQ:dparfrac}
f =  p + \sum_{i=1}^m \sum_{j=1}^{n_i} \sum_{\ell=0}^{d_{i,j}} \frac{\alpha_{i,j,\ell}}{(x-(\beta_i+\ell))^{j}},
\end{equation}
where~$p\in K[x]$,  $m, n_i, d_{i,j}\in \bN$, $\alpha_{i, j, \ell}, \beta_i\in \ok$, and~$\beta_i$'s are
in distinct $\bZ$-orbits.

\begin{define}[Discrete residue]\label{DEF:dres}
Let~$f\in K(x)$ be of the form~\eqref{EQ:dparfrac}.
The sum~$\sum_{\ell=0}^{d_{i,j}}\alpha_{i,j, \ell}$
is called the \emph{discrete residue} of~$f$ at
the $\bZ$-orbit $[\beta_i]$ of multiplicity~$j$ (with respect to~$x$),
denoted by~$\dres_x(f, [\beta_i], j)$.
\end{define}

\begin{lemma}\label{LM:dres}
Let~$f=\sum_{\ell=0}^d \alpha_{\ell}/(x-(\beta+\ell))^s$ be
such that~$d, s\in \bN$ and~$\alpha_{\ell}, \beta\in \ok$.
Then~$f$ is rational summable in~$\ok(x)$ if and only if
the sum~$\sum_{\ell=0}^d \alpha_{\ell}$ is zero that is, if and only if~$\dres_x(f, [\beta], s)=0$.
\end{lemma}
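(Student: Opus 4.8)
The plan is to prove the two directions separately, with the explicit shift identity
$\Delta_x\!\left(1/(x-(\beta+n))^s\right) = 1/(x-(\beta+n-1))^s - 1/(x-(\beta+n))^s$
doing the heavy lifting in both; note that this identity shows $\Delta_x$ preserves pole multiplicity and keeps all poles within a single $\bZ$-orbit.

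For sufficiency, I would assume $\sum_{\ell=0}^d \alpha_\ell = 0$ and search for a certificate supported on the same orbit and multiplicity, namely $g = \sum_n c_n/(x-(\beta+n))^s$. Applying the identity above and reindexing yields $\Delta_x(g) = \sum_n (c_{n+1}-c_n)/(x-(\beta+n))^s$, so I need $c_{n+1}-c_n = \alpha_n$ for $0\le n\le d$ and $c_{n+1}=c_n$ otherwise. The partial sums $c_n = \sum_{\ell=0}^{n-1}\alpha_\ell$ solve this system; crucially $c_n = 0$ for $n\le 0$, and the hypothesis $\sum_\ell \alpha_\ell = 0$ forces $c_n = 0$ for $n\ge d+1$ as well. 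Thus only finitely many $c_n$ are nonzero, $g$ is a genuine element of $\ok(x)$, and $\Delta_x(g)=f$.

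For necessity, I would assume $f = \Delta_x(g)$ with $g\in \ok(x)$ and argue that $\Delta_x$ can never produce a nonzero discrete residue. Writing $g$ in its unique partial fraction decomposition over $\ok$, with coefficient $c_{\gamma,j}$ on $1/(x-\gamma)^j$, the fact that $\Delta_x(1/(x-\gamma)^j)$ contributes only to poles of multiplicity exactly $j$, located at $\gamma-1$ and $\gamma$, shows that the coefficient of $1/(x-\delta)^s$ in $\Delta_x(g)$ is $c_{\delta+1,s}-c_{\delta,s}$. Summing over all $\delta$ in the single orbit $[\beta]$ gives the telescoping sum $\sum_{\delta\in[\beta]}(c_{\delta+1,s}-c_{\delta,s})$, which vanishes because the rational function $g$ has only finitely many nonzero partial-fraction coefficients. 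Since this sum equals $\dres_x(\Delta_x(g),[\beta],s) = \dres_x(f,[\beta],s) = \sum_{\ell=0}^d \alpha_\ell$, we conclude $\sum_\ell \alpha_\ell = 0$.

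The main obstacle is the bookkeeping in the necessity direction: one must confirm that terms of $g$ of a different multiplicity, or attached to a different $\bZ$-orbit, cannot contribute to the multiplicity-$s$ residue at $[\beta]$, which is exactly what the multiplicity- and orbit-preserving properties of the shift identity guarantee, and that the finiteness of the partial-fraction expansion of $g$ is what collapses the telescoping sum. Everything else reduces to a direct computation with the shift identity.
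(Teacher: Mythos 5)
Your proof is correct, and it diverges from the paper's in an interesting way. The sufficiency direction is essentially the paper's argument in closed form: the paper proves it by induction on $d$, using the same shift identity to absorb the top term $\alpha_{m+1}/(x-(\beta+m+1))^s$ into the next one down, and your partial-sum certificate $c_n=\sum_{\ell=0}^{n-1}\alpha_\ell$ is exactly that induction unrolled into an explicit anti-difference (with the hypothesis $\sum_\ell\alpha_\ell=0$ guaranteeing the certificate has finite support, hence lies in $\ok(x)$). The necessity direction is where you genuinely depart. The paper argues by contraposition: assuming $\sum_\ell\alpha_\ell\neq 0$, it splits $\alpha_0=\bar\alpha_0+\tilde\alpha_0$ so that $\tilde\alpha_0+\sum_{\ell\geq 1}\alpha_\ell=0$, applies the already-proved direction to write $f=\bar\alpha_0/(x-\beta)^s+\Delta_x(\tilde g)$ with $\bar\alpha_0\neq 0$, and then cites an external result (\cite[Lemma 3]{Matusevich2000} or \cite[Lemma 6.3]{Hardouin2008}) that a nonzero term over a shift-free denominator is not summable. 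You instead prove directly that \emph{any} element of $\Delta_x(\ok(x))$ has vanishing discrete residues: since $\Delta_x$ preserves pole multiplicity and $\bZ$-orbit, the multiplicity-$s$ coefficients of $\Delta_x(g)$ along the orbit $[\beta]$ form the differences $c_{\delta+1,s}-c_{\delta,s}$ of the finitely supported coefficients of $g$, so their sum telescopes to zero. Your route is self-contained, replacing the cited non-summability lemma with a short telescoping computation, and it is in substance the same argument the paper deploys one step later for the ``summable implies all residues vanish'' direction of Proposition~\ref{PROP:ratsum}; so you have effectively inlined that proposition's technique at the level of the single-orbit lemma. What the paper's version buys is the isolation of the true obstruction as a reusable fact (non-summability of a single shift-free term, which is where dispersion enters); what your version buys is independence from the literature and a proof that generalizes verbatim to all orbits and multiplicities simultaneously.
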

\begin{proof}
Suppose that the sum~$\sum_{\ell=0}^d \alpha_{\ell}$ is zero.
We show that~$f$ is rational summable in~$\ok(x)$.
To this end, we proceed by induction on~$d$.
In the base case when~$d=0$, $f$ is clearly rational summable in~$\ok(x)$ since~$f=0$.
Suppose that the assertion holds for~$d=m$ with~$m\geq 0$. Note that
\begin{align*}
 \frac{\alpha_{m+1}}{(x-(\beta+m+1))^s} =
    \Delta_x \left(-\frac{\alpha_{m+1}}{(x-(\beta+m+1))^s}\right) +
    \frac{\alpha_{m+1}}{(x-(\beta+m))^s}.
\end{align*}
This implies that
\[\sum_{\ell=0}^{m+1} \frac{\alpha_{\ell}}{(x-(\beta+\ell))^s} =
\Delta_x \left(-\frac{\alpha_{m+1}}{x-(\beta+m+1)^s}\right) +
\sum_{\ell=0}^{m} \frac{\tilde{\alpha}_{\ell}}{(x-(\beta+\ell))^s},\]
where~$\tilde{\alpha}_{\ell} = \alpha_{\ell}$ if~$0\leq \ell \leq m-1$
and~$\tilde{\alpha}_{m} = \alpha_{m+1} + \alpha_{m}$. By definition, the
sum~$\sum_{\ell=0}^m \tilde{\alpha}_{\ell}$ is still zero.
 The induction hypothesis then implies that there exists~$\tilde{g}\in \ok(x)$ such that
\[\sum_{\ell=0}^{m} \frac{\tilde{\alpha}_{\ell}}{(x-(\beta+\ell))^s} = \Delta_x(\tilde{g}).\]
So~$f=\Delta_x(g)$ with~$g=\tilde{g} - \alpha_{m+1}/(x-(\beta+m+1))^s\in \ok(x)$.
For the opposite implication, we assume to the contrary that
the sum~$\sum_{\ell=0}^d \alpha_\ell$
is nonzero. Without loss of generality, we can assume that~$\alpha_0 \neq 0$.
Write~$\alpha_0 = \bar{\alpha}_0 + \tilde{\alpha}_0$ such that~$\tilde{\alpha}_0 +
\sum_{\ell=1}^{d} \alpha_\ell =0$. Since~$\sum_{\ell=0}^d \alpha_\ell\neq 0$, $\bar{\alpha}_0\neq 0$.
By the assertion shown above, there exists~$\tilde{g}\in \ok(x)$ such that
\[f = \frac{\bar{\alpha}_0}{(x-\beta)^s} + \Delta_x(\tilde{g}).\]
Since~$\disp_x((x-\beta)^s)=0$ and~$\bar{\alpha}_0\neq 0$,
${\bar{\alpha}_0}/{(x-\beta)^s}$ is not rational
summable by~\cite[Lemma 3]{Matusevich2000} or~\cite[Lemma 6.3]{Hardouin2008}.
Then~$f$ is not rational summable in~$\ok(x)$.
This completes the proof.
\end{proof}
\begin{prop}\label{PROP:ratsum}
Let~$f=a/b\in K(x)$ be such that~$a, b\in K[x]$ and~$\gcd(a, b)=1$.
Then~$f$ is rational summable in~$K(x)$ if and only if the  discrete residue $\dres_x(f, [\beta], j)$
is zero for any $\bZ$-orbit~$[\beta]$ with~$b(\beta)=0$ of any multiplicity~$j\in \bN$.
\end{prop}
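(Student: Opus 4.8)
The plan is to reduce the assertion over $K(x)$ to the one-orbit, one-multiplicity case already settled in Lemma~\ref{LM:dres}, and then to descend a certificate from $\ok(x)$ back to $K(x)$. First I would write $f$ in the decomposition~\eqref{EQ:dparfrac} over $\ok$, collecting its polar part into blocks
\[
f = p + \sum_{i=1}^m \sum_{j=1}^{n_i} f_{i,j}, \qquad f_{i,j} = \sum_{\ell=0}^{d_{i,j}} \frac{\alpha_{i,j,\ell}}{(x-(\beta_i+\ell))^{j}},
\]
so that each $f_{i,j}$ is exactly of the shape treated in Lemma~\ref{LM:dres} and $\dres_x(f,[\beta_i],j)=\sum_{\ell}\alpha_{i,j,\ell}$.

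For the ``only if'' direction, suppose $f=\Delta_x(g)$ with $g\in K(x)\subseteq\ok(x)$, and decompose $g$ over $\ok$ into blocks by $\bZ$-orbit and multiplicity as well. The key observation is that $\Delta_x$ respects this block structure: applying $\Delta_x$ to a single term $c/(x-\mu)^{s}$ yields $c/(x-(\mu-1))^{s}-c/(x-\mu)^{s}$, whose two poles lie in the same orbit $[\mu]$ and have the same order $s$. Consequently $\Delta_x$ maps the $([\gamma],s)$-block of $g$ into the $([\gamma],s)$-block, distinct blocks cannot interfere, and the numerators of the image telescope, so its discrete residue is $0$. By uniqueness of the decomposition~\eqref{EQ:dparfrac}, the $([\beta_i],j)$-block of $f$ is precisely such an image, whence $\dres_x(f,[\beta_i],j)=0$ for every orbit $[\beta_i]$ and every multiplicity $j$.

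For the ``if'' direction, assume all discrete residues vanish. Then each $f_{i,j}$ has zero discrete residue, so Lemma~\ref{LM:dres} supplies $g_{i,j}\in\ok(x)$ with $f_{i,j}=\Delta_x(g_{i,j})$. The polynomial part is handled directly, since $\Delta_x\colon K[x]\to K[x]$ is surjective in characteristic zero (compare leading coefficients), giving $P\in K[x]$ with $\Delta_x(P)=p$. Hence $f=\Delta_x(g)$ with $g=P+\sum_{i,j}g_{i,j}$, and by construction $g$ lies in $L(x)$ for some finite Galois extension $L/K$ containing all the $\alpha_{i,j,\ell}$ and $\beta_i$.

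The main obstacle, and the only point where the hypothesis $f\in K(x)$ rather than $f\in\ok(x)$ is used, is descending this certificate from $L(x)$ back to $K(x)$. I expect to resolve this by Galois averaging: set $G=\mathrm{Gal}(L/K)$, which is finite since $\mathrm{char}\,K=0$, and replace $g$ by $\bar g=|G|^{-1}\sum_{\sigma\in G}\sigma(g)$. Because each $\sigma$ commutes with $\Delta_x$ and fixes $f$, one has $\Delta_x(\bar g)=|G|^{-1}\sum_{\sigma}\sigma(f)=f$; and $\bar g$ is $G$-invariant, hence lies in $L(x)^{G}=K(x)$. This exhibits $f$ as rational summable in $K(x)$ and completes the proof.
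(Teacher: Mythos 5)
Your proof is correct, and its skeleton matches the paper's own: decompose $f$ over $\ok$ into orbit/multiplicity blocks as in~\eqref{EQ:dparfrac}, apply Lemma~\ref{LM:dres} blockwise to produce a certificate in $\ok(x)$ for the ``if'' direction, and for the ``only if'' direction decompose the certificate $g$ over $\ok$ and observe that $\Delta_x$ preserves each $([\gamma],s)$-block while telescoping its numerators to zero. Where you genuinely differ is the descent from $\ok(x)$ back to $K(x)$. The paper handles this exactly as in Proposition~\ref{PROP:ratint}: it asserts that the certificate assembled from Lemma~\ref{LM:dres} is, for each irreducible factor of $b$, a symmetric function of the conjugate roots, hence already lies in $K(x)$. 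Making that rigorous requires checking that the inductive construction in Lemma~\ref{LM:dres} is Galois-equivariant (it depends on a choice of orbit representatives and an ordering of the poles), a point the paper leaves implicit. Your Galois-averaging step sidesteps this entirely: since $\mathrm{Gal}(L/K)$ acts coefficientwise on $L(x)$, commutes with $\Delta_x$, and fixes $f$, the average $\bar g$ of \emph{any} certificate is again a certificate and is invariant, hence lies in $L(x)^{G}=K(x)$. What this buys is robustness---the argument is independent of how the $g_{i,j}$ were constructed---and it transfers verbatim to the $q$-setting of Proposition~\ref{PROP:ratqsum}; the only cost is invoking the Galois correspondence for $L(x)/K(x)$ in place of the paper's bare symmetry observation.
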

\begin{proof}
Let~$f\in K(x)$ be decomposed into the form~\eqref{EQ:dparfrac}.
If the discrete residue of~$f$
at any~$\bZ$-orbit of any multiplicity is zero, then Lemma~\ref{LM:dres} implies that
for all~$i, j$ with~$1\leq i \leq m$
and~$1\leq j \leq n_i$, the sum
\[\sum_{\ell=0}^{d_{i,j}} \frac{\alpha_{i,j,\ell}}{(x-(\beta_i+\ell))^{j}} = \Delta_x(g_{i,j})\quad
\text{for some~$g_{i, j}\in \ok(x)$.}\]
Since any polynomial is rational summable, there exists~$\tilde{p}\in K[x]$
such that~$p = \Delta_x(\tilde{p})$.
So~$f = \Delta_x(\tilde{p} + g)$, where~$g=\sum_{i=1}^m\sum_{j=1}^{n_i} g_{i,j}$.
Arguing as in Proposition~\ref{PROP:ratint}, one sees that
for each irreducible factor $p$ of~$b$, the sum in~$f$ is a symmetric function
of those~$\beta_i$'s that are roots of~$p$.  From this one concludes that the
sum is in~$K(x)$ and that~$f$ is rational summable in~$K(x)$.

Suppose that~$f$ is rational summable in~$K(x)$, i.e., $f=\Delta_x(g)$ for some~$g\in K(x)$.
Over the field~$\ok$, we decompose~$g$ into the form~\eqref{EQ:dparfrac}. For all~$i, j$
with~$1\leq i \leq m$ and~$1\leq j \leq n_i$, the linearity of~$\Delta_x$ implies that
\[\Delta_x\left(\sum_{\ell=0}^{d_{i,j}} \frac{\alpha_{i,j,\ell}}{(x-(\beta_i+\ell))^{j}} \right)
=
\sum_{\ell=0}^{d_{i,j}+1} \frac{\tilde{\alpha}_{i,j,\ell}}{(x-(\tilde{\beta}_i +\ell))^{j}}, \]
where~$\tilde{\beta}_i = \beta_i -1$, $\tilde{\alpha}_{i,j, 0}={\alpha}_{i, j, 0}$,
$\tilde{\alpha}_{i, j, d_{i,j}+1} = -\alpha_{i, j, d_{i, j}}$,
and~$\tilde{\alpha}_{i, j, \ell} = \alpha_{i,j, \ell}-\alpha_{i, j, \ell-1}$
for~$1\leq \ell \leq d_{i, j}$. Then the
residue~$\dres_x(f, [\tilde{\beta}_i], j)=\sum_{\ell=0}^{d_{i, j}}\tilde{\alpha}_{i,j,\ell}=0$
for all~$i, j$. This completes the proof.
\end{proof}
\begin{remark}
Proposition~\ref{PROP:ratsum} is also known in literature  (see~\cite[Theorem 10]{Matusevich2000} or~\cite[Corollary 1]{Marshall2005}).
We have recast the known proofs in our terms to show the relevance of discrete residues.
\end{remark}

\subsection{$q$-discrete residues}\label{SUBSECT:qres}
Given a rational function, the $q$-analogue of Abramov's algorithm
in~\cite{Abramov1995b}
can decide whether it is rational $q$-summable or not.
Here, we present a $q$-analogue of Proposition~\ref{PROP:ratsum} in terms of a
$q$-discrete analogue of residues. To this end, we first recall
some terminology from~\cite{Abramov1975, Abramov1989, Abramov1995b}.

For an element~$\alpha \in \ok$, we call the subset~$\{\alpha \cdot q^i \mid i \in \bZ\}$ of~$\ok$
the~\emph{$q^\bZ$-orbit} of~$\alpha$
in~$\ok$, denoted by~$[\alpha]_q$. For a polynomial~$b\in K[x]\setminus K$, the value
\[\max\{i\in \bZ \mid \text{$\exists$  nonzero~$\alpha, \beta \in \ok$ such
that~$\alpha=q^i\cdot \beta$ and~$b(\alpha)=b(\beta)=0$}\}\]
is called the~\emph{$q$-dispersion} of~$b$ with respect to~$x$, denoted by~$\qdisp_x(b)$.
For~$b=\lambda x^n$ with~$\lambda \in K$ and~$n\in \bN\setminus\{0\}$,
we define~$\qdisp_x(b)=+\infty$.
The polynomial~$b$ is said to be~\emph{$q$-shift-free} with respect
to~$x$ if~$\qdisp_x(b)=0$.
Let~$f=a/b\in K(x)$ be such that~$a, b\in K[x]$ and~$\gcd(a, b)=1$.
Over the field~$\ok$, $f$ can be uniquely decomposed into the form
\begin{equation}\label{EQ:qparfrac}
f = c + xp_1 + \frac{p_2}{x^s} +
\sum_{i=1}^m \sum_{j=1}^{n_i} \sum_{\ell=0}^{d_{i,j}} \frac{\alpha_{i,j,\ell}}{(x-q^\ell \cdot \beta_i)^{j}},
\end{equation}
where~$c\in K$, $p_1, p_2 \in K[x]$,  $m, n_i\in \bN$ are nonzero,
$s, d_{i,j}\in \bN$, $\alpha_{i, j, \ell}, \beta_i\in \ok$, and~$\beta_i$'s are nonzero and
in distinct $q^\bZ$-orbits.

\begin{define}[$q$-discrete residue]\label{DEF:qres}
Let~$f\in K(x)$ be of the form~\eqref{EQ:qparfrac}.
The sum $\sum_{\ell=0}^{d_{i,j}}q^{-\ell \cdot j} \alpha_{i,j, \ell}$
is called the \emph{$q$-discrete residue} of~$f$ at
the~$q^\bZ$-orbit $[\beta_i]_q$ of multiplicity~$j$ (with respect to~$x$),
denoted by~$\qres_x(f, [\beta_i]_q, j)$. In addition, we call the constant~$c$ the
\emph{$q$-discrete residue} of~$f$ at infinity, denoted by~~$\qres_x(f, \infty)$.
\end{define}

We summarize some basic facts concerning rational $q$-summability in the next lemma.
For a detailed proof, one can see~\cite[\S 3]{Abramov1995b}.
\begin{lemma}\label{LM:ratqsum}
Let~$p, p_1, p_2\in K[x]$, $c\in K$, and~$s\in \bN\setminus \{0\}$ be as in \eqref{EQ:qparfrac}. Then
\begin{enumerate}
  \item $\deg_x(\Delta_{q, x}(p)) = \deg_x(p)$.
  \item If~$c$ is nonzero, then~$c$ is not rational $q$-summable in~$K(x)$.
  \item $f = xp_1 + p_2/x^s$ is rational $q$-summable in~$K(x)$.
\end{enumerate}
\end{lemma}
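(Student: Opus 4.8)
The plan is to handle the three parts separately, with everything reducing to one elementary dichotomy: for an integer $j\neq 0$ one has $\Delta_{q,x}(x^j)=(q^j-1)x^j$ with $q^j-1\neq 0$ (since $q$ is not a root of unity), whereas $\Delta_{q,x}(x^0)=0$. For part (1), I would simply expand $p=\sum_{k=0}^n a_k x^k$ with $a_n\neq 0$ and compute $\Delta_{q,x}(p)=\sum_{k=1}^n a_k(q^k-1)x^k$, the $k=0$ term dropping out. As $q^k\neq 1$ for every $k\geq 1$, the leading coefficient $a_n(q^n-1)$ is nonzero whenever $n\geq 1$, so $\deg_x(\Delta_{q,x}(p))=\deg_x(p)$ for non-constant $p$; the constant case, where $\Delta_{q,x}$ annihilates $p$, is exactly the content of part (2).

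For part (2), I would argue by contradiction: suppose $c=\Delta_{q,x}(g)$ for some nonzero $c\in K$ and some $g\in K(x)$. The plan is to decompose $g=h+r$ uniquely, where $h\in K[x]$ and $r$ is a proper rational function (numerator degree strictly below denominator degree). The key observation is that $Q_x$ respects this splitting: since $q\neq 0$, the substitution $x\mapsto qx$ preserves degrees, hence sends polynomials to polynomials and proper rational functions to proper rational functions. By uniqueness of the polynomial-plus-proper decomposition, matching the two pieces of $c=\Delta_{q,x}(h)+\Delta_{q,x}(r)$ forces $\Delta_{q,x}(h)=c$. But part (1) shows $\Delta_{q,x}(h)$ is either $0$ (if $h$ is constant) or of degree $\geq 1$ (if not), so it can never equal the nonzero constant $c$, a contradiction.

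For part (3), I would exploit that in the decomposition \eqref{EQ:qparfrac} the summand $p_2/x^s$ is the principal part at the pole $x=0$, hence a combination of strictly negative powers of $x$, while $xp_1$ is a combination of strictly positive powers; in particular neither contributes a constant term. Writing $xp_1+p_2/x^s=\sum_j e_j x^j$ over a finite set of exponents $j\neq 0$, each monomial is individually $q$-summable via $x^j=\Delta_{q,x}\bigl(x^j/(q^j-1)\bigr)$, which is legitimate precisely because $q^j-1\neq 0$ for $j\neq 0$. Summing these certificates yields an explicit $g\in K(x)$ with $\Delta_{q,x}(g)=xp_1+p_2/x^s$.

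There is no deep obstacle here; all the content sits at the single exponent $j=0$, where $q^j-1$ vanishes. The only point requiring genuine care is part (2): one must rule out exotic rational $g$ whose $q$-difference might collapse to a constant, and the clean way to do this is the polynomial-versus-proper splitting together with its invariance under $Q_x$, which confines the obstruction to the polynomial part so that part (1) can finish the job. In part (3) the analogous subtlety is merely to notice that the decomposition \eqref{EQ:qparfrac} has already quarantined the non-summable constant into the separate term $c$, guaranteeing that every monomial actually occurring in $xp_1+p_2/x^s$ has nonzero exponent.
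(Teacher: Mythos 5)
Your proof is correct. Note that the paper gives no proof of this lemma at all: it states it as a summary of known facts and refers the reader to \cite[\S 3]{Abramov1995b}, so your argument supplies a self-contained proof rather than an alternative to one in the text. The ingredients you use are the right ones and each step checks out: the dichotomy $\Delta_{q,x}(x^j)=(q^j-1)x^j$ with $q^j\neq 1$ for every integer $j\neq 0$ (valid also for negative $j$, since $q$ is not a root of unity) settles (1) and (3); and for (2), the splitting $g=h+r$ into polynomial plus proper part, which $Q_x$ preserves, lets uniqueness of that splitting push the whole problem into the polynomial part, where (1) finishes it. You are also right that (3) silently relies on the uniqueness convention of \eqref{EQ:qparfrac}, which forces $\deg_x(p_2)<s$ so that $xp_1+p_2/x^s$ contains no constant term; without that normalization the statement is false (take $p_2=x^s$). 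Two minor quibbles: part (1) as literally stated fails when $p$ is a nonzero constant (then $\Delta_{q,x}(p)=0$, of degree $-\infty$), and your restriction to non-constant $p$ is the right reading of the lemma's intent; also, your aside that the constant case of (1) ``is exactly the content of part (2)'' conflates annihilation with non-summability --- part (2) is the strictly stronger assertion that no $g\in K(x)$ whatsoever satisfies $\Delta_{q,x}(g)=c$, which is precisely what your polynomial-versus-proper argument is needed to establish.
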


The following lemma is a $q$-analogue of Lemma~\ref{LM:dres} and its proof
proceeds in a similar way.
\begin{lemma}\label{LM:qres}
Let~$f=\sum_{\ell=0}^d \alpha_{\ell}/(x-q^\ell\cdot \beta )^s$ be such
that~$d, s\in \bN$,~$\alpha_{\ell}, \beta\in \ok$,
and~$\beta$ is nonzero.
Then~$f$ is rational $q$-summable in~$\ok(x)$ if and only if the
sum~$\sum_{\ell=0}^d q^{-\ell\cdot s}\alpha_{\ell}$ is zero, that is,
if and only if~$\qres_x(f, [\beta]_q, s)=0$.
\end{lemma}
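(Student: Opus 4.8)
The plan is to mirror the proof of Lemma~\ref{LM:dres}, replacing the translation $x\mapsto x+1$ by the $q$-dilation $x\mapsto qx$ and carefully tracking the weight $q^{-s}$ that this substitution introduces. The computational heart is the $q$-shift identity obtained by applying $\Delta_{q,x}=Q_x-1$ to a single simple fraction: since $Q_x\bigl((x-q^{m+1}\beta)^{-s}\bigr)=(qx-q^{m+1}\beta)^{-s}=q^{-s}(x-q^m\beta)^{-s}$, one gets
\[
\frac{\alpha}{(x-q^{m+1}\beta)^s}=-\Delta_{q,x}\!\left(\frac{\alpha}{(x-q^{m+1}\beta)^s}\right)+\frac{q^{-s}\alpha}{(x-q^m\beta)^s}.
\]
This is the exact analogue of the telescoping identity of Lemma~\ref{LM:dres}, now carrying the factor $q^{-s}$ as one moves from the orbit element $q^{m+1}\beta$ down to $q^m\beta$.

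For the forward direction I would induct on $d$, assuming $\sum_{\ell=0}^d q^{-\ell s}\alpha_\ell=0$. The base case $d=0$ forces $\alpha_0=0$, so $f=0$ is $q$-summable. For the inductive step, applying the identity to the top term $\alpha_d/(x-q^d\beta)^s$ rewrites $f$ as a $\Delta_{q,x}$-image plus $\sum_{\ell=0}^{d-1}\tilde\alpha_\ell/(x-q^\ell\beta)^s$, where $\tilde\alpha_\ell=\alpha_\ell$ for $\ell\le d-2$ and $\tilde\alpha_{d-1}=\alpha_{d-1}+q^{-s}\alpha_d$. The point to verify, and it is a one-line computation, is that the \emph{weighted} sum is preserved: $\sum_{\ell=0}^{d-1}q^{-\ell s}\tilde\alpha_\ell=\sum_{\ell=0}^{d}q^{-\ell s}\alpha_\ell=0$. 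The induction hypothesis then yields a $q$-summable representation of the reduced sum, completing this direction.

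For the converse I would use the contrapositive. Iterating the same reduction (which does not require the weighted sum to vanish) collapses $f$ to a single residue term, $f=\Delta_{q,x}(\tilde g)+\gamma/(x-\beta)^s$, where $\gamma=\sum_{\ell=0}^d q^{-\ell s}\alpha_\ell=\qres_x(f,[\beta]_q,s)$ is precisely the weight accumulated at the bottom of the orbit. If $\gamma\neq 0$, then $f$ is $q$-summable if and only if $\gamma/(x-\beta)^s$ is. Here the hypothesis $\beta\neq 0$ is essential: together with $q$ not being a root of unity it forces $\qdisp_x((x-\beta)^s)=0$, so that $(x-\beta)^s$ is $q$-shift-free, and by the $q$-analogue of Matusevich's criterion (cf.\ \cite[\S 3]{Abramov1995b}) the term $\gamma/(x-\beta)^s$ with $\gamma\neq 0$ is not $q$-summable in $\ok(x)$; hence neither is $f$.

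I expect the main obstacle to be organizational rather than conceptual: keeping the $q^{-s}$ weights correct throughout the telescoping, so that the preserved invariant is the weighted sum $\sum_\ell q^{-\ell s}\alpha_\ell$ rather than the plain sum as in the discrete case, and pinning down the precise non-summability statement for a $q$-shift-free denominator that here plays the role Matusevich's lemma played in Lemma~\ref{LM:dres}. The nonvanishing requirement $\beta\neq 0$ is exactly what distinguishes this case from the $q$-summable terms $p_2/x^s$ of Lemma~\ref{LM:ratqsum}(3), and it must be invoked at the precise point where one asserts that $\gamma/(x-\beta)^s$ fails to be $q$-summable.
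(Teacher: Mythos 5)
Your proof is correct and follows essentially the same route as the paper's: the same $q$-shift telescoping identity and induction on $d$ for the forward direction, and for the converse the same reduction of $f$ to $\gamma/(x-\beta)^s + \Delta_{q,x}(\tilde{g})$ with $\gamma = \sum_{\ell=0}^d q^{-\ell s}\alpha_\ell$, concluding from the non-summability of a simple fraction whose denominator is $q$-shift-free (which requires $\beta\neq 0$; the paper cites \cite[Lemma 6.3]{Hardouin2008} for this fact). The only cosmetic difference is that the paper derives the converse by splitting $\alpha_0 = \bar{\alpha}_0 + \tilde{\alpha}_0$ and invoking the already-proved direction, whereas you iterate the reduction directly; the resulting decomposition and final appeal are identical.
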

\begin{proof}
Suppose that the sum~$\sum_{\ell=0}^d q^{-\ell\cdot s}\alpha_{\ell}$ is zero.
We show that~$f$ is rational $q$-summable in~$\ok(x)$. To this end,
we proceed by induction on~$d$.
In the base case when~$d=0$, $f$ is clearly rational $q$-summable since~$f=0$.
Suppose that the assertion holds for~$d=m$ with~$m\geq 0$. Note that
\begin{align*}
 \frac{\alpha_{m+1}}{(x-q^{m+1}\beta)^s} =
  \Delta_{q, x} \left(-\frac{\alpha_{m+1}}{(x-q^{m+1}\beta)^s}\right) +
    \frac{q^{-s}\alpha_{m+1}}{(x-q^m\beta)^s}.
\end{align*}
This implies that
\[\sum_{\ell=0}^{m+1} \frac{\alpha_{\ell}}{(x-q^{\ell}\beta)^s} =
\Delta_{q, x} \left(-\frac{\alpha_{m+1}}{(x-q^{m+1}\beta)^s}\right) +
\sum_{\ell=0}^{m} \frac{\tilde{\alpha}_{\ell}}{(x-q^{\ell}\beta)^s},\]
where~$\tilde{\alpha}_{\ell} = \alpha_{\ell}$ if~$0\leq \ell \leq m-1$
and~$\tilde{\alpha}_{m} = q^{-s}\alpha_{m+1} + \alpha_{m}$.
From the definition and assumption on the~$\alpha_\ell$'s,
the sum~$\sum_{\ell=0}^m q^{-\ell\cdot s}\tilde{\alpha}_{\ell}$ is zero.
The induction hypothesis then implies that there exists~$\tilde{g}\in \ok(x)$ such that
\[\sum_{\ell=0}^{m} \frac{\tilde{\alpha}_{\ell}}{(x-q^{\ell}\beta)^s} = \Delta_{q, x}(\tilde{g}).\]
So~$f=\Delta_{q, x}(g)$ with~$g=\tilde{g} - \alpha_{m+1}/(x-q^{m+1}\beta)^s\in \ok(x)$.
For the opposite implication, we assume to the contrary that the
sum~$\sum_{\ell=0}^d q^{-\ell \cdot s}\alpha_\ell$
is nonzero. Without loss of generality, we can assume that~$\alpha_0 \neq 0$.
Write~$\alpha_0 = \bar{\alpha}_0 + \tilde{\alpha}_0$ such that~$\tilde{\alpha}_0 +
\sum_{\ell=1}^{d} q^{-\ell\cdot s}\alpha_\ell =0$.
Since~$\sum_{\ell=0}^d q^{-\ell\cdot s}\alpha_\ell\neq 0$, $\bar{\alpha}_0 \neq 0$.
By the assertion shown above, there exists~$\tilde{g}\in \ok(x)$ such that
\[f = \frac{\bar{\alpha}_0}{(x-\beta)^s} + \Delta_{q, x}(\tilde{g}).\]
Since~$\qdisp_x((x-\beta)^s)=0$ and~$\bar{\alpha}_0\neq 0$,
${\bar{\alpha}_0}/{(x-\beta)^s}$ is not rational
summable by~\cite[Lemma 6.3]{Hardouin2008}.
Then~$f$ is not rational $q$-summable in~$\ok(x)$.
This completes the proof.
\end{proof}

\begin{prop}\label{PROP:ratqsum}
Let~$f=a/b\in K(x)$ be such that~$a, b\in K[x]$ and~$\gcd(a, b)=1$.
Then~$f$ is rational $q$-summable in~$K(x)$ if and only if the~$q$-discrete
residues $\qres_x(f, \infty)$
and~$\qres_x(f, [\beta]_q, j)$ are all zero for any $q^\bZ$-orbit~$[\beta]_q$ with~$\beta \neq 0$ and~$b(\beta)=0$
of any multiplicity~$j\in \bN$.
\end{prop}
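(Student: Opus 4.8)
The plan is to mirror the proof of Proposition~\ref{PROP:ratsum}, replacing each discrete ingredient by its $q$-analogue: Lemma~\ref{LM:qres} takes the place of Lemma~\ref{LM:dres}, and Lemma~\ref{LM:ratqsum} is used to dispose of the pieces $c$, $xp_1$, and $p_2/x^s$ that have no counterpart in the shift setting. Throughout, $f$ is written in its unique decomposition \eqref{EQ:qparfrac} over $\ok$, and I exploit the uniqueness of this decomposition to match residues on the two sides of an identity.

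For sufficiency, assume every $q$-discrete residue of $f$ vanishes. Then $\qres_x(f,\infty)=c=0$, while $xp_1+p_2/x^s$ is rational $q$-summable by Lemma~\ref{LM:ratqsum}(3). For each orbit $[\beta_i]_q$ and each multiplicity $j$, the hypothesis $\qres_x(f,[\beta_i]_q,j)=0$ together with Lemma~\ref{LM:qres} gives that the inner block $\sum_{\ell=0}^{d_{i,j}}\alpha_{i,j,\ell}/(x-q^\ell\beta_i)^j$ equals $\Delta_{q,x}(g_{i,j})$ for some $g_{i,j}\in\ok(x)$. Summing over $i,j$ and adjoining a $q$-antidifference of $xp_1+p_2/x^s$ produces a certificate $g$ for $f$ over $\ok(x)$. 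It then remains to descend from $\ok(x)$ to $K(x)$: arguing as in Propositions~\ref{PROP:ratint} and~\ref{PROP:ratsum}, since $q\in K$ the Galois group permutes the $q^\bZ$-orbits of the roots of each irreducible factor of $b$, so the block antidifferences may be assembled symmetrically and the resulting $g$ is fixed by $\operatorname{Gal}(\ok/K)$, hence lies in $K(x)$.

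For necessity, suppose $f=\Delta_{q,x}(g)$ with $g\in K(x)$ and decompose $g$ via \eqref{EQ:qparfrac}. Two computations drive the argument. First, $\Delta_{q,x}$ sends a polynomial $\sum_k a_k x^k$ to $\sum_k a_k(q^k-1)x^k$, which has zero constant term; since the fractional parts of $g$ contribute only terms with denominators, the constant term $c$ of $f$ is $0$, i.e.\ $\qres_x(f,\infty)=0$ (consistent with Lemma~\ref{LM:ratqsum}(2)). Second, using $Q_x\bigl((x-q^\ell\beta)^{-j}\bigr)=q^{-j}(x-q^{\ell-1}\beta)^{-j}$, a short telescoping check shows that applying $\Delta_{q,x}$ to a single block $\sum_{\ell=0}^d \alpha_\ell/(x-q^\ell\beta)^j$ yields a block supported on the same $q^\bZ$-orbit whose $q$-discrete residue is $0$. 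By the uniqueness of the decomposition \eqref{EQ:qparfrac}, the orbit blocks of $\Delta_{q,x}(g)$ are exactly those of $f$, so every $\qres_x(f,[\beta]_q,j)$ vanishes.

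The main obstacle is the descent step in the sufficiency direction: Lemma~\ref{LM:qres} only guarantees a certificate in $\ok(x)$, and one must show that the antidifferences attached to a full Galois orbit of poles can be chosen compatibly so that their sum is defined over $K$. The $q$-setting demands slightly more bookkeeping than the shift case, because a single irreducible factor of $b$ may meet several distinct $q^\bZ$-orbits and because the behaviour at $x=0$ and at $\infty$ (the factors $x$ and $x^s$) must be tracked separately via Lemma~\ref{LM:ratqsum}; once the Galois action is seen to permute the blocks of a given multiplicity within each orbit, the symmetric-function argument of Proposition~\ref{PROP:ratint} applies without change.
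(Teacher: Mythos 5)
Your proof is correct and follows essentially the same route as the paper's: the same decomposition \eqref{EQ:qparfrac}, Lemma~\ref{LM:qres} plus Lemma~\ref{LM:ratqsum} for sufficiency with the symmetric-function/Galois descent from $\ok(x)$ to $K(x)$, and the same telescoping residue computation on the blocks of $g$ for necessity. Your treatment of the residue at infinity (noting $\Delta_{q,x}$ of a polynomial has zero constant term) is a slightly more explicit version of the paper's remark that $\Delta_{q,x}(c)=0$, but the argument is the same.
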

\begin{proof}
Let~$f\in K(x)$ be decomposed into the form~\eqref{EQ:qparfrac}. If the residue of~$f$
at any $q^\bZ$-orbit~$[\beta]_q, \beta \neq 0$, of any multiplicity is zero, then Lemma~\ref{LM:qres} implies that
for all~$i, j$ with~$1\leq i \leq m$
and~$1\leq j \leq n_i$, the sum
\[\sum_{\ell=0}^{d_{i,j}} \frac{\alpha_{i,j,\ell}}{(x-q^{\ell}\beta_i)^{j}} = \Delta_{q, x}(g_{i,j})\quad
\text{for some~$g_{i, j}\in \ok(x)$.}\]
Since the rational function~$xp_1 + \frac{p_2}{x^s}$ in~\eqref{EQ:qparfrac} is rational $q$-summable
by Lemma~\ref{LM:ratqsum}, there exists~$u\in K(x)$ such that~$xp_1 + p_2/x^s = \Delta_{q, x}(u)$.
So~$f = \Delta_{q, x}(u + g)$, where~$g=\sum_{i=1}^m\sum_{j=1}^{n_i} g_{i,j}$.
As in Proposition~\ref{PROP:ratsum}, we see that~$g \in K(x)$ and therefore that
$f$ is rational~$q$-summable in~$K(x)$.

Suppose that~$f$ is rational $q$-summable in~$K(x)$, i.e., $f=\Delta_{q, x}(g)$ for some~$g\in K(x)$.
Over the field~$\ok$, we decompose~$g$ into the form~\eqref{EQ:qparfrac}. For all~$i, j$
with~$1\leq i \leq m$ and~$1\leq j \leq n_i$, the linearity of~$\Delta_{q, x}$ implies that
\[\Delta_{q, x}\left(\sum_{\ell=0}^{d_{i,j}} \frac{\alpha_{i,j,\ell}}{(x-q^{\ell}\beta_i)^{j}} \right)
=
\sum_{\ell=0}^{d_{i,j}+1} \frac{\tilde{\alpha}_{i,j,\ell}}{(x-q^{\ell}\tilde{\beta}_i)^{j}}, \]
where~$\tilde{\beta}_i = q^{-1}\beta_i$, $\tilde{\alpha}_{i,j, 0}=q^{-j}{\alpha}_{i, j, 0}$,
$\tilde{\alpha}_{i, j, d_{i,j}+1} = -\alpha_{i, j, d_{i, j}}$,
and~$\tilde{\alpha}_{i, j, \ell} = q^{-j}\alpha_{i,j, \ell}-\alpha_{i, j, \ell-1}$
for~$1\leq \ell \leq d_{i, j}$. Then the
residue~$\qres_x(f, [\tilde{\beta}_i]_q, j)=\sum_{\ell=0}^{d_{i, j}}q^{-\ell\cdot j}
\tilde{\alpha}_{i,j,\ell}=0$
for all~$i, j$. Since~$\Delta_{q, x}(c)=0$ for any constant~$c\in k$,
the residue of~$f$ at infinity is zero.
This completes the proof.
\end{proof}

\subsection{Residual forms}\label{SUBSECT:resform}
In terms of residues, we will present a normal form of a rational function in the
quotient space~$K(x)/\partial_x (K(x))$ with~$\partial_x\in \{D_x, \Delta_x, \Delta_{q, x}\}$.
Let~$f\in K(x)$. If~$f$ is of the form~\eqref{EQ:cparfrac}, then we can reduce it to
\[f = D_x(g) + r, \quad \text{where~$r=\sum_{i=1}^m \frac{\cres_x(f, \beta_i)}{x-\beta_i}$}.\]
Note that~$r$ actually lies in~$K(x)$. We call such an~$r$ the \emph{residual form}
of~$f$ with respect to~$D_x$. Similarly, residual forms with respect to~$\Delta_x$ and~$\Delta_{q, x}$
are respectively
\[r=\sum_{i=1}^m\sum_{j=1}^{n_i} \frac{\dres_x(f, [\beta_i], j)}{(x-\beta_i)^j}, \quad \text{where
~$\beta_i$'s in distinct $\bZ$-orbits}.\]
and
\[r=c + \sum_{i=1}^m\sum_{j=1}^{n_i} \frac{\qres_x(f, [\beta_i]_q, j)}{(x-\beta_i)^j},
\quad \text{where~$c\in K$ and~$\beta_i$'s in distinct $q^\bZ$-orbits}.\]
Such a residual form for a rational function is unique up to taking a different representative
from orbits. One can compute residual forms without introducing algebraic
extensions of~$K$ by algorithms
in~\cite{Hermite1872, Ostrogradsky1845, Horowitz1971, Paule1995b, Pirastu1995a, Pirastu1995b, Abramov1995b}.

\section{Algebraic functions}\label{SECT:algfuns}
As early as 1827, Abel  already observed that an algebraic function satisfies
a linear differential equation with polynomial coefficients~\cite[p.\ 287]{Abel1881}.
The annihilating differential equations are important in the study of algebraic
functions and their series expansions~\cite{Comtet1964, Chudnovsky1986, Chudnovsky1987}.
Algorithms for constructing differential annihilators for algebraic functions
have been developed in~\cite{Cockle1861, Harley1862, CormierSingerTragerUlmer2002, Nahay2003, BCLSS2007}.
It is not true that any algebraic function satisfies a linear  or a~$q$-linear recurrence.
In this section we characterize those algebraic functions that satisfy such equations and prove
a few lemmas concerning algebraic solutions of first order linear and~$q$-linear recurrences.
In the next section, we will see  how this restriction on algebraic solutions of such recurrences
is responsible for the essential difference between the continuous problems and the~($q$-)discrete ones.

Let~$k$ be an algebraically closed field of characteristic zero.
 Let~$q\in k$ be such that~$q^i\neq 1$ for any~$i\in \bZ\setminus\{0\}$.
Let~$k(t)$ be the field
of all rational functions in~$t$ over~$k$.
On the field~$k(t)$, we let~$D_t$, $S_t$, and~$Q_t$ denote the derivation, shift operator, and
$q$-shift operator with respect to~$t$, respectively. Let~$k(t)\langle D_t \rangle$
(resp.\ $k(t)\langle S_t \rangle$, $k(t)\langle Q_t \rangle$) denote
the ring of linear differential (resp.\ recurrence, $q$-recurrence) operators over~$k(t)$.
We recall the following fact for reference later. One can find its proof
in~\cite[p.\ 339]{Harley1862} or~\cite[p.\ 267]{Comtet1964}.

\begin{prop}\label{PROP:aflde}
Let~$\alpha(t)$ be an element of the algebraic closure of~$k(t)$. Then there exists
a nonzero operator~$L(t, D_t)\in k(t)\langle D_t\rangle$ such that~$L(\alpha)=0$.
\end{prop}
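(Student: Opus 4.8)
The plan is to show that $\alpha = \alpha(t)$, being algebraic over $k(t)$, together with its derivatives $D_t\alpha, D_t^2\alpha, \ldots$ all live in a single finite-dimensional $k(t)$-vector space, so that enough of them must be linearly dependent over $k(t)$; such a dependence is precisely an annihilating operator. First I would fix the minimal polynomial of $\alpha$ over $k(t)$, say $P(t,Y) = \sum_{i=0}^n c_i(t) Y^i$ with $c_i \in k(t)$ and $c_n \neq 0$, and let $E = k(t)(\alpha)$ be the field it generates. Since $\alpha$ is algebraic, $[E : k(t)] = n < \infty$, and $E$ has $k(t)$-basis $1, \alpha, \ldots, \alpha^{n-1}$.

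\medskip

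The key step is to check that $D_t$ extends to a derivation on $E$ with $D_t(E) \subseteq E$; that is, $\alpha$ is differentiably finite over $k(t)$. To see this I would differentiate the relation $P(t,\alpha) = 0$ implicitly, obtaining
\[
\frac{\pa P}{\pa t}(t,\alpha) + \frac{\pa P}{\pa Y}(t,\alpha)\cdot D_t(\alpha) = 0.
\]
Because $P$ is the minimal (hence separable, as $k$ has characteristic zero) polynomial of $\alpha$, the partial derivative $\frac{\pa P}{\pa Y}(t,\alpha)$ is nonzero in $E$, so it is invertible there. Solving gives
\[
D_t(\alpha) = -\frac{\pa P/\pa t\,(t,\alpha)}{\pa P/\pa Y\,(t,\alpha)} \in E.
\]
By induction, every iterate $D_t^j(\alpha)$ lies in $E$, since $D_t$ maps $E$ into itself: any element of $E$ is a polynomial in $\alpha$ with coefficients in $k(t)$, and differentiating such an expression produces, via the chain rule and the formula above, another element of $E$.

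\medskip

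With that in hand the conclusion is a dimension count. The $n+1$ elements $\alpha, D_t(\alpha), \ldots, D_t^{n}(\alpha)$ all lie in the $n$-dimensional $k(t)$-vector space $E$, so they are linearly dependent over $k(t)$: there exist $a_0, \ldots, a_n \in k(t)$, not all zero, with $\sum_{j=0}^{n} a_j\, D_t^j(\alpha) = 0$. Setting $L(t,D_t) = \sum_{j=0}^{n} a_j D_t^j \in k(t)\langle D_t\rangle$, which is nonzero by construction, yields $L(\alpha) = 0$, as desired.

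\medskip

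The main obstacle, and the only place requiring care, is verifying that $D_t$ genuinely stabilizes $E$ rather than escaping into a larger extension — equivalently, that the implicit-differentiation formula makes sense inside $E$. This hinges on $\frac{\pa P}{\pa Y}(t,\alpha) \neq 0$, which is exactly the separability of the minimal polynomial in characteristic zero; once this is established, closure of $E$ under $D_t$ and hence the finiteness of the span of the derivatives follow routinely, and the linear-algebra step is immediate. (I note that the analogous statements for $S_t$ and $Q_t$ are \emph{false} in general, which is precisely the phenomenon Section~\ref{SECT:algfuns} is designed to isolate; here the derivation case works because differentiation, unlike shifting, does not move the argument off the orbit structure of the algebraic element.)
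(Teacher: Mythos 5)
Your proof is correct and complete: extending $D_t$ to $E=k(t)(\alpha)$ via implicit differentiation (valid since the minimal polynomial is separable in characteristic zero) and then observing that $\alpha, D_t(\alpha),\ldots,D_t^{n}(\alpha)$ are $n+1$ vectors in the $n$-dimensional $k(t)$-vector space $E$ gives exactly the desired nonzero annihilating operator. The paper itself offers no proof of Proposition~\ref{PROP:aflde}, deferring instead to Harley and Comtet, and your argument is precisely the classical one found in those references, so there is nothing to fault and no divergence of method to report.
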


{As mentioned above, the situation is different if we consider the
linear \linebreak ($q$-)recurrence equations for
algebraic functions and the following results show that requiring an algebraic
function~$f$ to satisfy such a recurrence equation severely restricts~$f$.

\begin{prop}\label{PROP:afrde}
Let~$\alpha(t)$ be an element in the algebraic closure of~$k(t)$. If there exists
a nonzero operator~$L(t, S_t)\in k(t)\langle S_t \rangle$ such that~$L(\alpha)=0$,
then~$\alpha\in k(t)$.
\end{prop}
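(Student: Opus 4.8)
The plan is to replace the single element $\alpha$ by a finite extension of $k(t)$ that is stable under the shift, and then to exploit the geometry of the associated branched cover of $\mathbb{P}^1$ together with the fact that the translation $t\mapsto t+1$ has no finite periodic orbit in characteristic zero.

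First I would fix an extension of $S_t$ to an automorphism $\sigma$ of $\overline{k(t)}$, so that each $S_t^i(\alpha)=\sigma^i(\alpha)$ is again algebraic over $k(t)$ of the same degree as $\alpha$. Writing $L=\sum_{i=0}^n a_i(t)S_t^i$ with $a_n\neq0$, set
\[
  E:=k(t)\bigl(\alpha,\sigma(\alpha),\dots,\sigma^{n-1}(\alpha)\bigr),
\]
a finite extension of $k(t)$. The relation $L(\alpha)=0$ lets me solve for $\sigma^n(\alpha)=-a_n^{-1}\sum_{i<n}a_i\,\sigma^i(\alpha)\in E$, so $\sigma(E)\subseteq E$; since $\sigma$ restricts to an automorphism of $k(t)$ it preserves degrees over $k(t)$, and hence $\sigma(E)=E$. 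Thus $E/k(t)$ is a finite extension stable under the shift, and it suffices to prove $E=k(t)$, for then $\alpha\in E=k(t)$.

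Next I would pass to geometry. As $\operatorname{char}k=0$ the extension $E/k(t)$ is separable, and as $E$ is a field it is the function field of a smooth projective irreducible curve $C$ equipped with a finite morphism $\pi\colon C\to\mathbb{P}^1$ of degree $d=[E:k(t)]$, the target carrying the coordinate $t$. Stability $\sigma(E)=E$ says precisely that $\sigma$ induces an automorphism of $C$ lying over the translation $\tau\colon t\mapsto t+1$ of $\mathbb{P}^1$, so the (finite) branch locus $B\subset\mathbb{P}^1$ of $\pi$ is $\tau$-invariant. But $\tau$ fixes only $\infty$ and sends any finite point $c$ to the infinitely many distinct points $c+\bZ$ (distinct since $\bZ\hookrightarrow k$ in characteristic zero); hence the only finite $\tau$-invariant subsets of $\mathbb{P}^1$ are $\emptyset$ and $\{\infty\}$, and therefore $B\subseteq\{\infty\}$.

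Finally I would apply Riemann--Hurwitz. The cover is tame, so with $s\ge1$ the number of points of $C$ over $\infty$,
\[
  2g_C-2=-2d+\sum_{P\mid\infty}(e_P-1)=-2d+(d-s)=-d-s .
\]
If $d\ge2$ this gives $2g_C-2\le -3$, forcing $g_C<0$, which is absurd; hence $d=1$, i.e.\ $E=k(t)$ and $\alpha\in k(t)$. The crux of the argument is the passage to the shift-stable field $E$ together with the observation that shift-invariance forces the branch locus into $\{\infty\}$; the genus count is then immediate. I note that the same strategy applied to $Q_t$ would replace $\tau$ by the scaling $t\mapsto qt$, whose only finite periodic points are $0$ and $\infty$; there Riemann--Hurwitz permits covers ramified over $\{0,\infty\}$ (for instance $Y^d=t$), which is exactly why the $q$-analogue requires a weaker conclusion rather than $\alpha\in k(t)$.
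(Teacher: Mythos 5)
Your proof is correct and follows essentially the same route as the paper's: you form the shift-stable finite extension $E=k(t)(\alpha,\sigma(\alpha),\dots,\sigma^{n-1}(\alpha))$ (the paper's Lemma~\ref{LM:shiftclose}), observe that invariance of the finite branch locus under $t\mapsto t+1$ forces all ramification to lie over $\infty$, and conclude $[E:k(t)]=1$ by Riemann--Hurwitz (the paper's Lemma~\ref{LM:aflre1}, phrased there in the language of places rather than curves). The only cosmetic difference is that you get surjectivity of $\sigma$ on $E$ by a degree count instead of the paper's backward induction (which needs $a_0\neq 0$), and your closing remark about the $q$-case matches the paper's Lemma~\ref{LM:aflqre1} exactly.
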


\begin{prop}\label{PROP:aflqe}
Let~$\alpha(t)$ be an element in the algebraic closure of~$k(t)$. If there exists
a nonzero operator~$L(t, Q_t)\in k(t)\langle Q_t \rangle$ such that~$L(\alpha)=0$,
then~$\alpha\in k(t^{1/n})$ for some positive integer~$n$.
\end{prop}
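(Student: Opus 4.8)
The plan is to translate the statement into a ramification condition and exploit the fact that the $q$-shift has no finite fixed point other than the origin. Since $k$ is algebraically closed of characteristic zero, the compositum $\bigcup_{n\ge 1}k(t^{1/n})$ is precisely the maximal subextension of $\overline{k(t)}$ unramified away from $t=0$ and $t=\infty$ (the Kummer covers $t=v^{n}$ exhaust the connected covers of $\mathbb{P}^1\setminus\{0,\infty\}$, whose \'etale fundamental group is $\widehat{\bZ}$). Thus it suffices to prove that the finite extension $k(t)(\alpha)/k(t)$ is unramified at every place $t=c$ with $c\in k\setminus\{0\}$; any such $\alpha$ then lies in some $k(t^{1/n})$. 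Write $\sigma:=Q_t$, an automorphism of $\overline{k(t)}$ fixing $0$ and $\infty$ and acting on the $t$-line by $t\mapsto qt$.

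First I would normalize the recurrence. Writing $L=\sum_{i\ge i_0}a_iQ_t^{i}$ with $a_{i_0}\ne 0$, apply the automorphism $\sigma^{-i_0}$ to the identity $L(\alpha)=0$; this produces $L'=\sum_{j\ge0}\sigma^{-i_0}(a_{j+i_0})Q_t^{\,j}$ with $L'(\alpha)=0$ and nonzero constant term, so I may assume $L=\sum_{i=0}^{d}a_iQ_t^{i}$ with $a_0\ne0$. Now suppose, for contradiction, that $k(t)(\alpha)/k(t)$ ramifies at some finite nonzero place, and let $S\subset k\setminus\{0\}$ be the (finite) set of such places. The substitution $t\mapsto q^{i}t$ carries the covering attached to $\alpha$ to the one attached to $Q_t^{i}\alpha=\alpha(q^{i}t)$, so $k(t)(Q_t^{i}\alpha)/k(t)$ is ramified over $k\setminus\{0\}$ exactly along $q^{-i}S$; equivalently $Q_t^{i}\alpha$ is unramified at $c$ iff $q^{i}c\notin S$. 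Because $S$ is finite and $q$ is not a root of unity, I can choose $c\in S$ maximal in its $q$-orbit, i.e. $q^{i}c\notin S$ for all $i\ge1$. For this $c$, every $Q_t^{i}\alpha$ with $i\ge1$ is unramified at $c$, while $\alpha$ itself is ramified there.

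The contradiction is then local at $t=c$. Let $\mathcal{P}_c$ denote the field of Puiseux series in $(t-c)$; since the extension ramifies at $c$, there is a $k(t)$-embedding $\iota\colon\overline{k(t)}\hookrightarrow\mathcal{P}_c$ for which $\iota(\alpha)$ genuinely involves fractional powers of $(t-c)$, i.e. its support of exponents has a nonzero class modulo $\bZ$. For $i\ge1$, unramifiedness of $k(t)(Q_t^{i}\alpha)$ at $c$ forces $\iota(Q_t^{i}\alpha)\in k((t-c))$, a Laurent series in integer powers, and each $a_i\in k(t)$ likewise expands in integer powers. Applying $\iota$ to $L(\alpha)=0$ gives $a_0\,\iota(\alpha)=-\sum_{i=1}^{d}a_i\,\iota(Q_t^{i}\alpha)$, whose right-hand side has support contained in $\bZ$. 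Since $a_0\ne0$, multiplication by $a_0$ only shifts exponents by integers, so the left-hand side retains a nonzero class modulo $\bZ$ in its support. This is impossible, proving that $\alpha$ is unramified at every finite nonzero place, hence $\alpha\in k(t^{1/n})$ for some~$n$.

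The routine points are the normalization and the orbit selection; the conceptual input is the identification of $\bigcup_n k(t^{1/n})$ with the extensions unramified outside $\{0,\infty\}$. The step I expect to require the most care is the local comparison: making precise that ``ramified at $c$'' yields a Puiseux expansion with a genuine fractional exponent, while each shifted term $Q_t^{i}\alpha$ $(i\ge1)$ and each coefficient $a_i$ contributes only integer exponents, so that tracking exponents modulo $\bZ$ is legitimate and forces the contradiction. This is exactly where the hypothesis that $q$ is not a root of unity enters, guaranteeing that the orbit $\{q^{i}c\}$ is infinite and that a maximal element of $S$ in each orbit exists.
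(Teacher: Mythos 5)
Your proposal is correct, but it takes a genuinely different route from the paper's. The paper first closes $k(t)(\alpha)$ up to a finite extension $E=k(t)(\alpha,Q_t(\alpha),\ldots,Q_t^{n-1}(\alpha))$ stable under $Q_t$ (Lemma~\ref{LM:shiftclose}) and then argues globally: since $\sigma(E)=E$, the ramification locus of $E/k(t)$ is invariant under $t\mapsto qt$, hence contained in $\{0,\infty\}$, after which Riemann--Hurwitz forces $g=0$ with a single, totally ramified place over each of $0$ and $\infty$, and Riemann--Roch produces $y=c\,t^{1/n}$ with $E=k(y)$ (Lemma~\ref{LM:aflqre1}). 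You avoid the $\sigma$-stable closure entirely: after normalizing $L$ to have nonzero constant term, you pick $c\in S$ maximal in its $q$-orbit (legitimate precisely because $q$ is not a root of unity, so the orbit $\{q^ic\}$ is infinite and meets the finite set $S$ in a set with a top element) and derive a contradiction from Puiseux expansions at $t=c$; your ``support mod $\bZ$'' bookkeeping is rigorous, since $k(((t-c)^{1/m}))=\bigoplus_{j=0}^{m-1}(t-c)^{j/m}\,k((t-c))$ is graded by exponent classes and multiplication by $a_0\in k((t-c))\setminus\{0\}$ preserves the grading --- or, even more simply, $\iota(\alpha)=-a_0^{-1}\sum_{i=1}^{d}a_i\,\iota(Q_t^i\alpha)\in k((t-c))$ directly contradicts ramification at $c$. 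The delicate local facts you flag (ramification at $c$ yields an embedding $\iota$ with a genuinely fractional exponent; unramifiedness forces \emph{every} embedding of $k(t)(Q_t^i\alpha)$ into the Puiseux field to land in $k((t-c))$) are both true, since the completion at the induced place is $k((t-c))(\iota(\beta))=k(((t-c)^{1/e'}))$. What remains --- that a finite extension of $k(t)$ unramified outside $\{0,\infty\}$ lies in some $k(t^{1/n})$ --- is exactly the geometric core of the paper's Lemma~\ref{LM:aflqre1}, whose Riemann--Hurwitz/Riemann--Roch computation never uses the difference structure once the ramification locus is known; you invoke it in the equivalent form $\pi_1^{\text{\'et}}(\mathbb{P}^1\setminus\{0,\infty\})\cong\widehat{\bZ}$. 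The trade-off: the paper's argument is more uniform (the same two lemmas also handle the shift case, Proposition~\ref{PROP:afrde}), while yours works directly with $k(t)(\alpha)$ rather than an auxiliary difference field, replaces the global invariance of the ramification set by a one-sided orbit argument at a single well-chosen place, and isolates exactly where the non-root-of-unity hypothesis enters.
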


We have included complete proofs (and references to other proofs) of these results in the Appendix.

In the next section, algebraic functions will appear as residues
of bivariate rational functions and these functions will satisfy certain first order
linear ($q$-)recurrence relations. The following lemmas characterize the form of these functions.
Although these characterizations can be derived from Propositions~\ref{PROP:afrde} and~\ref{PROP:aflqe},
we will give more elementary proofs.
Abusing notation, we let~$S_t$ and~$Q_t$ denote arbitrary extensions of~$S_t$ and~$Q_t$ to automorphisms
of~$\overline{k(t)}$, the algebraic closure of~$k(t)$.

\begin{lemma}\label{LM:const} Let~$n$ be a positive integer.
\begin{enumerate}
\item[(i)] If~$f\in \overline{k(t)}$ and~$S_t^n(f) = f$, then~$f \in k$.
\item[(ii)] If~$f\in \overline{k(t)}$ and~$Q_t^n(f) = f$, then~$f \in k$.
\item[(iii)]  If~$f\in \overline{k(t)}$ and~$D_t(f)=0$, then~$f \in k$.
\end{enumerate}
\end{lemma}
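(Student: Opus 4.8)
The plan is to treat all three parts by a single device: pass to the minimal polynomial of $f$ over $k(t)$ and show that its coefficients are forced to lie in $k$. Write
\[
P(Y) = Y^d + c_{d-1}Y^{d-1} + \cdots + c_0 \in k(t)[Y]
\]
for the minimal polynomial of $f$ over $k(t)$, with $c_0,\ldots,c_{d-1}\in k(t)$. Once I know that every $c_i$ lies in $k$, the polynomial $P$ has coefficients in the algebraically closed field $k$, so $f$ is algebraic over $k$ and hence $f\in k$. Thus in each case it suffices to prove that the $c_i$ are constants in $k$.

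For parts (i) and (ii), set $\sigma = S_t^n$ (resp.\ $\sigma = Q_t^n$), viewed as a field automorphism of $\overline{k(t)}$ restricting to $t\mapsto t+n$ (resp.\ $t\mapsto q^n t$) on $k(t)$. Applying $\sigma$ to $P(f)=0$ and using that $\sigma$ is a ring homomorphism with $\sigma(f)=f$ by hypothesis, I obtain $P^{\sigma}(f)=0$, where $P^{\sigma}(Y)=Y^d+\sigma(c_{d-1})Y^{d-1}+\cdots+\sigma(c_0)$ is again monic of degree $d$. Since $P$ is the minimal polynomial of $f$ and $P^{\sigma}$ is a monic degree-$d$ polynomial annihilating $f$, minimality forces $P^{\sigma}=P$, i.e.\ $\sigma(c_i)=c_i$ for every $i$. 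It then remains to verify the elementary claim that a rational function $c\in k(t)$ fixed by $\sigma$ must lie in $k$. For $\sigma=S_t^n$ this holds because the finite pole set of a nonconstant $c$ cannot be invariant under the nonzero shift $t\mapsto t-n$; hence $c$ is a polynomial, and comparing subleading coefficients in $c(t+n)=c(t)$ then forces $\deg c=0$. For $\sigma=Q_t^n$ I use that $q$ is not a root of unity, so $q^n\neq 1$: the nonzero poles of $c$ would form infinite orbits under scaling by $q^n$, so $c$ must be a Laurent polynomial $\sum_j a_j t^j$, and $c(q^n t)=c(t)$ gives $a_j(q^{nj}-1)=0$, whence $a_j=0$ for all $j\neq 0$ and $c=a_0\in k$.

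For part (iii), $D_t$ is a derivation rather than an automorphism, so I differentiate instead. Applying $D_t$ to $P(f)=0$ and using the product and chain rules gives
\[
P'(f)\,D_t(f) + \sum_{i=0}^{d-1} D_t(c_i)\,f^{i} = 0,
\]
where $P'$ denotes the derivative with respect to $Y$. Since $D_t(f)=0$ by hypothesis, the first term vanishes and $\sum_{i=0}^{d-1} D_t(c_i)\,f^{i}=0$ is a relation for $f$ over $k(t)$ of degree at most $d-1$. By minimality of $P$ this relation must be trivial, so $D_t(c_i)=0$ for every $i$; as the field of constants of $k(t)$ is exactly $k$, each $c_i\in k$.

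The genuinely content-bearing steps are the invariance-implies-constant facts at the end of each case; everything else is formal. I expect the $q$-shift case of (ii) to be the most delicate, since it is precisely there that the hypothesis that $q$ is not a root of unity is essential: without it $q^n$ could equal $1$ and invariant nonconstant Laurent polynomials would exist. I also note that the use of minimality to conclude $P^{\sigma}=P$ (rather than merely $P\mid P^{\sigma}$) relies on both polynomials being monic of the same degree $d$, which is why it is convenient to normalize $P$ to be monic from the outset.
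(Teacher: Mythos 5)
Your proof is correct and follows essentially the same route as the paper: both reduce the algebraic case to the rational-function case via invariance of the minimal polynomial under $S_t^n$ (resp.\ $Q_t^n$), and both settle the rational case by noting that a nonconstant rational function would have an infinite orbit of poles (or zeros) under the shift or $q$-shift, using that $q$ is not a root of unity. The only cosmetic difference is part (iii), where the paper cites Lemma~3.3.2~(i) of \cite{BronsteinBook} while you prove the same fact directly by differentiating the minimal polynomial and invoking minimality, which is a self-contained version of the standard argument.
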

\begin{proof}~$(i)$. We begin by showing that if~$f\in {k(t)}$ and~$S_t^n(f) = f$ then~$f \in k$.
If~$f\notin k$, then there exists an element~$a\in k$ such that~$a$ is a pole or zero of~$f$.
In this case the infinite set~$\{a + in \ |  \ i \in \bZ \}$ will also consist of poles or zeroes,
an impossibility since~$f$ is a rational function. Now assume that~$f\in \overline{k(t)}$ and~$S_t^n(f) = f$.
Let~$Y^\lambda + a_{\lambda-1} Y^{\lambda-1} + \ldots + a_0$ be the minimal polynomial of~$f$ over~$k(t)$.
We then have that~$Y^\lambda + S_t^n(a_{\lambda-1}) Y^{\lambda-1} + \ldots + S_t^n(a_0)$ is also the minimal
polynomial of~$f(t) = S_t^n(f(t))$.  Therefore~$S_t^n(a_i) = a_i$ for all~$i = \lambda -1, \ldots  , 0$.
This implies that the~$a_i \in k$. Since $k$ is algebraically closed,~$f \in k$.

$(ii)$. We again begin by showing that if~$f\in {k(t)}$ and~$Q_t^n(f) = f$ then~$f \in k$.
Assume~$f \notin k$ and let~$a \in k$ be a nonzero pole or zero of~$f$. We then have that
the set~$\{aq^{in} \ | \ i \in \bZ \}$ consists of poles or zeroes. Since~$q$ is not a root of unit,
this set is infinite and we get a contradiction as before.  Therefore,~$f = ct^m$ for some~$m \in \bZ$.
Since~$f(q^nt) = f(t)$, we have~$q^{nm} = 1$, a contradiction. Therefore~$f\in k$. Now assume
that~$f\in \overline{k(t)}$ and~$Q_t^n(f) = f$. An argument similar to that in 1.~shows that 2.~holds.

$(iii)$. This assertion follows from Lemma~3.3.2~(i) of~\cite[Chapter 3]{BronsteinBook} and the assumption
that~$k$ is algebraically closed.
\end{proof}

\begin{lemma}\label{LM:commuting} Let~$E \subset F$ be fields of characteristic zero  with~$F$
algebraic over~$E$.  Let~$\sigma$ be an automorphism of~$F$ such that~$\sigma(E) \subset E$ and
let~$\delta$ be a derivation of~$F$ such that~$\delta(E) \subset E$. If~$\delta\sigma(f) = \sigma\delta(f)$
for all~$f \in E$, then~$\delta\sigma(f) = \sigma\delta(f)$ for all~$f \in F$.\end{lemma}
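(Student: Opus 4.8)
The plan is to study the single operator $D := \delta\sigma - \sigma\delta$ acting on $F$ and to show it vanishes identically, given that it vanishes on $E$ by hypothesis. First I would record the algebraic shape of $D$. It is clearly additive, and since $\sigma$ is a ring automorphism while $\delta$ is a derivation, a direct expansion of $\delta\sigma(fg)$ and $\sigma\delta(fg)$ and subtraction shows that $D$ satisfies the twisted Leibniz rule
\[
D(fg) = D(f)\,\sigma(g) + \sigma(f)\,D(g) \qquad \text{for all } f,g \in F,
\]
that is, $D$ is a $\sigma$-derivation of $F$. From this an easy induction yields the power rule $D(f^{i}) = i\,\sigma(f)^{\,i-1}\,D(f)$ for every $f \in F$ and every $i \geq 1$ (the $i=0$ term contributing nothing).

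Next, fix an arbitrary $f \in F$. Since $F$ is algebraic over $E$, let $P(Y) = Y^{n} + \sum_{i=0}^{n-1} a_i Y^{i} \in E[Y]$ be the minimal polynomial of $f$ over $E$, so that $\sum_{i=0}^{n} a_i f^{i} = 0$ with $a_n = 1$. I would then apply $D$ to this relation. Using additivity, the twisted Leibniz rule, the hypothesis $D(a_i) = 0$ (each $a_i$ lies in $E$), and the power rule, each term collapses to $D(a_i f^{i}) = i\,\sigma(a_i)\,\sigma(f)^{\,i-1}\,D(f)$, so the whole relation factors as
\[
D(f)\cdot \sigma\!\left(\sum_{i=1}^{n} i\,a_i\,f^{\,i-1}\right) = D(f)\cdot\sigma\bigl(P'(f)\bigr) = 0,
\]
where $P'$ is the formal derivative of $P$.

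Finally I would conclude by separability. Because $F$ has characteristic zero and $P$ is the minimal polynomial of $f$ of degree $n$, the derivative $P'$ is nonzero of degree $n-1 < n$, whence $P'(f) \neq 0$; applying the injective map $\sigma$ gives $\sigma(P'(f)) \neq 0$. As $F$ is a field, we may cancel to obtain $D(f) = 0$, and since $f$ was arbitrary this is precisely the assertion $\delta\sigma(f) = \sigma\delta(f)$ for all $f \in F$.

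The step I expect to require the most care is the very first one: deriving the twisted Leibniz rule and the resulting power rule exactly, since it is the presence of $\sigma$ (rather than the identity) inside the Leibniz rule that makes $D$ a $\sigma$-derivation and introduces the factors $\sigma(f)^{\,i-1}$, which in turn are what allow the minimal-polynomial relation to factor through $\sigma(P'(f))$. Everything after that is routine, though it is worth emphasizing that the argument genuinely uses characteristic zero: it is exactly separability of $P$ that guarantees $P'(f) \neq 0$ and thus permits the final cancellation.
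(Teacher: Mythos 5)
Your proof is correct, but it takes a different (more self-contained) route than the paper. The paper's proof conjugates: it observes that $\sigma^{-1}\delta\sigma$ is an ordinary derivation of $F$ which, by the hypothesis, agrees with $\delta$ on $E$, so that $\sigma^{-1}\delta\sigma-\delta$ is a derivation vanishing on $E$; it then invokes, as a black box, the uniqueness of extensions of derivations to algebraic (hence, in characteristic zero, separable) extensions to conclude this derivation vanishes on all of $F$. You instead work directly with the twisted operator $D=\delta\sigma-\sigma\delta$, verify it is a $\sigma$-derivation, and prove the vanishing from scratch by applying $D$ to the minimal polynomial of an arbitrary $f\in F$ and cancelling $\sigma\bigl(P'(f)\bigr)\neq 0$. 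The two are tightly linked: $D=\sigma\circ\bigl(\sigma^{-1}\delta\sigma-\delta\bigr)$, so the operators vanish simultaneously, and your minimal-polynomial computation is essentially the standard proof of the uniqueness theorem the paper cites, transported through $\sigma$. What the paper's argument buys is brevity, at the cost of an appeal to a standard result; what yours buys is a fully elementary, self-contained argument that moreover never uses $\sigma^{-1}$, so it goes through verbatim when $\sigma$ is merely a field endomorphism of $F$ rather than an automorphism. Your identification of separability ($P'(f)\neq 0$) as the crux is exactly right --- it is the same place where characteristic zero enters the paper's cited uniqueness theorem.
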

\begin{proof} One can verify that~$\sigma^{-1}\delta\sigma$ is a derivation on~$F$ such
that~$\sigma^{-1}\delta\sigma(E) \subset E$. Therefore~$\sigma^{-1}\delta\sigma - \delta$ is
a derivation on~$f$ that is zero on~$E$. From the uniqueness of extensions of derivations to
algebraic extensions, we have that~$\sigma^{-1}\delta\sigma - \delta$ is zero on~$F$, which
yields the result.\end{proof}
}

\begin{lemma}\label{LM:cstdq}
Let~$\alpha(t)$ be an element in the algebraic closure of~$k(t)$.
If there exists a nonzero~$n\in \bN$
such that~$S_t^n(\alpha)= q^m\alpha$ for some~$m\in \bZ$,
then~$m=0$ and~$\alpha(t)\in k$.
\end{lemma}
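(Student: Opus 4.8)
The plan is to reduce the eigenvalue-type relation $S_t^n(\alpha) = q^m\alpha$ to an honest periodicity relation $S_t^n(\beta)=\beta$ for a rational function $\beta\in k(t)$, from which both $m=0$ and (via Lemma~\ref{LM:const}(i)) the conclusion $\alpha\in k$ can be extracted. We may assume $\alpha\neq 0$, the case $\alpha=0$ being trivial.

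First I would pass to the minimal polynomial of $\alpha$. Let $P(Y)=Y^\lambda + a_{\lambda-1}Y^{\lambda-1}+\cdots+a_0$ with $a_i\in k(t)$ be the (monic, irreducible) minimal polynomial of $\alpha$ over $k(t)$. Since $S_t$ extends to an automorphism of $\overline{k(t)}$ whose restriction to $k(t)$ is the shift, it fixes $k$ pointwise and maps $k(t)$ to $k(t)$. Applying $S_t^n$ to $P(\alpha)=0$ and substituting $S_t^n(\alpha)=q^m\alpha$, then dividing by $q^{m\lambda}$, exhibits $\alpha$ as a root of the monic polynomial
\[
Y^\lambda + \sum_{i=0}^{\lambda-1} q^{-m(\lambda-i)}\,S_t^n(a_i)\,Y^{i},
\]
whose coefficients lie in $k(t)$. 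By uniqueness of the minimal polynomial this equals $P(Y)$, so comparing coefficients gives $S_t^n(a_i)=q^{m(\lambda-i)}a_i$ for all $i$; in particular $S_t^n(a_0)=q^{m\lambda}a_0$. Because $\alpha\neq0$ and $P$ is irreducible, $a_0\neq0$.

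The remaining ingredient is a purely rational statement: if a nonzero $r\in k(t)$ satisfies $S_t^n(r)=c\,r$ with $c\in k^\times$, then $c=1$ and $r\in k^\times$. I would prove this by a pole/zero count. The relation forces the finite set of poles of $r$ to be invariant under translation by $n$; a nonempty finite set cannot be invariant under a nonzero translation, so $r$ has no poles and is a polynomial. Applying the same reasoning to its zeros forces $r$ to be a nonzero constant, and then $S_t^n(r)=r$ gives $c=1$. Taking $r=a_0$ and $c=q^{m\lambda}$ yields $q^{m\lambda}=1$; since $q$ is not a root of unity and $\lambda\geq1$, this forces $m=0$. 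With $m=0$ the hypothesis becomes $S_t^n(\alpha)=\alpha$, and Lemma~\ref{LM:const}(i) immediately gives $\alpha\in k$.

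The conceptual crux is the second step: recognizing that applying the automorphism $S_t^n$ to the minimal polynomial relation transfers the scalar relation from $\alpha$ to each coefficient $a_i$, thereby reducing the algebraic-function problem to the elementary rational-function fact. The points requiring genuine care are checking that the constructed polynomial is monic of degree $\lambda$ with coefficients in $k(t)$ so that the uniqueness of the minimal polynomial applies, and verifying $a_0\neq0$ so that the rational-function lemma receives nonzero input; the pole/zero argument itself and the final appeal to Lemma~\ref{LM:const}(i) are then routine.
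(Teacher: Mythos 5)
Your proof is correct, but it takes a genuinely different route from the paper's. The paper works differentially: it invokes Lemma~\ref{LM:commuting} to commute $\delta = D_t$ past $S_t^n$, deduces $S_t^n(\delta\alpha/\alpha) = \delta\alpha/\alpha$ and hence $\delta\alpha = c\alpha$ with $c \in k$ by Lemma~\ref{LM:const}, and then derives a contradiction from the minimal polynomial via the logarithmic derivative $\delta a_0/a_0 = \lambda c$ and uniqueness of partial fraction decomposition --- concluding $\alpha \in k$ first and extracting $m=0$ last. You avoid the derivation entirely: applying the automorphism $S_t^n$ directly to $P(\alpha)=0$ and using uniqueness of the monic minimal polynomial, you transfer the twisted relation to the coefficients, $S_t^n(a_i) = q^{m(\lambda-i)}a_i$, and then kill the twist on the nonzero coefficient $a_0$ by the elementary orbit argument on poles and zeros (a nonempty finite set in $k$ cannot be invariant under translation by $n \neq 0$, and a nonconstant polynomial over the algebraically closed $k$ has zeros), forcing $q^{m\lambda}=1$ and hence $m=0$ since $q$ is not a root of unity; Lemma~\ref{LM:const}~(i) then finishes. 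Your route is purely difference-algebraic and arguably more self-contained --- its pole/zero count is exactly the mechanism underlying the paper's own proof of Lemma~\ref{LM:const}~(i), so you are really extending that lemma's method to the $q^m$-twisted case --- whereas the paper's differential detour buys uniformity: the same template (choose a commuting $\delta$, reduce to $\delta\alpha = c\alpha$) is reused verbatim for the $q$-analogue Lemma~\ref{LM:cstqd} with $\delta = tD_t$. All the delicate points in your write-up check out: the constructed polynomial is monic of degree $\lambda$ with coefficients in $k(t)$ since $S_t^n$ preserves $k(t)$ and fixes $q \in k$, and $a_0 \neq 0$ follows from irreducibility once $\alpha = 0$ is set aside.
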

\begin{proof} Let~$\delta = D_t$. Lemma~\ref{LM:commuting} implies
that~$S_t^n\delta = \delta S_t^n$ on~$\overline{k(t)}$. Therefore,~$S_t^n(\delta \alpha) = q^m \delta\alpha$.
One see that this implies that~$S_t^n(\delta\alpha/\alpha) = \delta\alpha/\alpha$, so by
Lemma~\ref{LM:const}~$\delta\alpha = c \alpha$ for some~$c \in k$. Assume that~$\alpha \notin k$ and
therefore that~$\delta\alpha \neq 0$ and~$c \neq 0$.
Let~$P(Y) = Y^\lambda + a_{\lambda-1} Y^{\lambda-1} + \ldots + a_0$ be the minimal polynomial
of~$\alpha$ over~$k(t)$. Applying~$\delta$ to~$P(\alpha)$, one sees that
\[Y^\lambda + \frac{\delta a_{\lambda-1} + (\lambda-1)c }{\lambda c} Y^{\lambda -1} + \ldots + \frac{\delta a_0}{\lambda c}\]
is also the minimal polynomial of~$\alpha$ over~$k(t)$.  Therefore
\[ \frac{\delta a_0}{a_0} = \lambda c .\]
Since~$a_0 \in k(t)$, we may write~$a_0 = d\prod(t-e_i)^{\mu_i}$, where~$d, e_i \in k, \mu_i \in \bZ$.
Therefore
\[\sum \frac{\mu_i}{t-e_i} = \lambda c\]
contradicting the uniqueness of partial fraction decomposition. This contradiction implies
that $\alpha \in k$.  From the equation~$S_t^n(\alpha)=q^m \alpha$ we get~$q^m=1$. Therefore~$m=0$ since~$q$
is not root of unity.
\end{proof}

\begin{lemma}\label{LM:intlin}
Let~$\alpha(t)$ be an element in the algebraic closure of~$k(t)$.
If there exists a nonzero~$n\in \bZ$
such that~$S_t^n(\alpha)-\alpha = m$ for some~$m\in \bZ$,
then~$\alpha(t)=\frac{m}{n} t + c$ for some~$c\in k$.
\end{lemma}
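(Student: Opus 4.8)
The plan is to mirror the proof of Lemma~\ref{LM:cstdq}, but in this additive setting the argument is more direct and no minimal-polynomial estimate is needed. The two ingredients are the commutativity of the shift and the derivation on $\overline{k(t)}$, supplied by Lemma~\ref{LM:commuting}, together with the descent-to-constants provided by Lemma~\ref{LM:const}.

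Set $\delta = D_t$. First I would apply $\delta$ to the hypothesis $S_t^n(\alpha) - \alpha = m$. Since $S_t^n$ and $\delta$ commute on $k(t)$ (shifting and differentiating a rational function commute, as $\tfrac{d}{dt}f(t+n) = f'(t+n)$), Lemma~\ref{LM:commuting} promotes this to $S_t^n\delta = \delta S_t^n$ on all of $\overline{k(t)}$. Hence applying $\delta$ to the relation yields $S_t^n(\delta\alpha) - \delta\alpha = \delta(m) = 0$, that is, $S_t^n(\delta\alpha) = \delta\alpha$. Since $\delta\alpha \in \overline{k(t)}$, Lemma~\ref{LM:const}~(i) forces $\delta\alpha = c$ for some $c \in k$.

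Next I would integrate. Consider $\gamma := \alpha - ct \in \overline{k(t)}$. Then $\delta\gamma = \delta\alpha - c = 0$, so Lemma~\ref{LM:const}~(iii) gives $\gamma \in k$; write $\gamma = c'$. Thus $\alpha = ct + c'$ with $c, c' \in k$. Finally I would substitute this back into the hypothesis: since $c, c'$ are fixed by $S_t$ and $S_t^n(t) = t + n$, one computes $S_t^n(\alpha) - \alpha = cn$, and comparing with $S_t^n(\alpha) - \alpha = m$ gives $cn = m$, so $c = m/n$. Hence $\alpha = \frac{m}{n}t + c'$ with $c' \in k$, as required.

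The only delicate point is the first step, namely transferring the commutation $S_t^n\delta = \delta S_t^n$ from $k(t)$ to the algebraic closure; this is precisely the content of Lemma~\ref{LM:commuting} together with the uniqueness of the extension of $D_t$ to $\overline{k(t)}$. After that the remaining steps are routine. I do not expect any genuine obstacle here, in contrast to the multiplicative case of Lemma~\ref{LM:cstdq}, where the analogous descent produces a logarithmic-derivative relation that must be ruled out by a partial-fraction argument. In the present additive situation a single differentiation already lands in the constants, and one integration finishes the proof.
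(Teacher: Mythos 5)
Your proof is correct, but it takes a different route from the paper's. The paper's proof is a one-liner: it writes down the explicit particular solution $\beta(t) = \frac{m}{n}t$, observes that $S_t^n(\beta) - \beta = m$, hence $S_t^n(\alpha - \beta) = \alpha - \beta$, and concludes $\alpha - \beta \in k$ directly from Lemma~\ref{LM:const}~(i) --- no differentiation, no Lemma~\ref{LM:commuting}, and no need for part~(iii) of Lemma~\ref{LM:const}. (The same trick, in multiplicative form with $\beta(t) = t^{m/n}$, is what the paper uses for Lemma~\ref{LM:qintlin}.) Your argument instead imports the machinery of the proofs of Lemmas~\ref{LM:cstdq} and~\ref{LM:cstqd}: commute $\delta = D_t$ past $S_t^n$ via Lemma~\ref{LM:commuting}, deduce $\delta\alpha \in k$ from part~(i), integrate via part~(iii), and then solve for the constant by substituting back. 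Every step checks out, and your approach has the virtue of being the ``generic'' method --- it works even when one cannot guess a closed-form particular solution of the inhomogeneous equation --- but in this additive case it spends three lemma applications where the paper spends one. The paper's proof is the cleaner choice here precisely because the particular solution $\frac{m}{n}t$ is staring you in the face.
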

\begin{proof}
Let~$\beta(t) = \frac{m}{n} t$. Since~$S_t^n(\beta) - \beta = m$, we
have that $S_t^n(\alpha - \beta) - (\alpha - \beta) = 0$.  Therefore Lemma~\ref{LM:const}
implies that~$\alpha = \beta + c = \frac{m}{n}t + c$ for some~$c \in k$.
\end{proof}

\begin{lemma}\label{LM:cstqd}
Let~$\alpha(t)$ be an element in the algebraic closure of~$k(t)$.
If there exists a nonzero~$n\in \bZ$
such that~$Q_t^n(\alpha)-\alpha=m$ for some~$m\in \bZ$,
then~$m=0$ and~$\alpha(t)\in k$.
\end{lemma}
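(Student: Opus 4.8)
The plan is to imitate the proof of Lemma~\ref{LM:cstdq}, reducing the equation to one governed by Lemma~\ref{LM:const}. Unlike the shift case (Lemma~\ref{LM:intlin}), the equation $Q_t^n(\alpha)-\alpha = m$ has no algebraic particular solution to subtract off, so instead I would differentiate it. Set $\alpha' = D_t(\alpha)$. The operators $D_t$ and $Q_t$ do not commute; rather $D_t Q_t^n = q^n Q_t^n D_t$, which holds on $k(t)$ by the chain rule and extends to $\overline{k(t)}$ since $\tfrac{1}{q^n}Q_t^{-n}D_tQ_t^n$ and $D_t$ are derivations of $\overline{k(t)}$ that agree on $k(t)$, hence agree everywhere by uniqueness of the extension of a derivation to an algebraic extension (as in Lemma~\ref{LM:commuting}). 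Applying $D_t$ to $Q_t^n(\alpha)-\alpha=m$ therefore gives $q^n Q_t^n(\alpha') - \alpha' = 0$, that is, $Q_t^n(\alpha') = q^{-n}\alpha'$.

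Next I would turn this twisted invariance into genuine invariance. Since $Q_t^n(t)=q^n t$, we get $Q_t^n(t\alpha') = q^n t\cdot q^{-n}\alpha' = t\alpha'$, so by Lemma~\ref{LM:const}(ii) (which applies after replacing $n$ by $|n|$) we conclude $t\alpha' \in k$, say $\alpha' = c/t$ with $c\in k$. Once I know $c=0$, it follows that $\alpha'=0$, hence $\alpha\in k$ by Lemma~\ref{LM:const}(iii), and then $Q_t^n(\alpha)=\alpha$ forces $m=0$, finishing the proof.

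The crux, and the step I expect to give the most trouble, is showing $c=0$: no algebraic function over $k(t)$ can have derivative $c/t$ with $c\neq 0$ (morally, $\log t$ is transcendental over $k(t)$). I would argue as in Lemma~\ref{LM:cstdq} via the minimal polynomial. Assuming $c\neq 0$ we have $\alpha\notin k$; let $P(Y)=Y^\lambda + a_{\lambda-1}Y^{\lambda-1}+\ldots+a_0\in k(t)[Y]$ be the minimal polynomial of $\alpha$, with $\lambda\geq 1$. Differentiating $P(\alpha)=0$ and substituting $\alpha'=c/t$ yields a polynomial in $\alpha$ of degree at most $\lambda-1$ which must vanish identically; comparing the coefficient of $\alpha^{\lambda-1}$ gives $D_t(a_{\lambda-1}) = -c\lambda/t$. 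But $a_{\lambda-1}\in k(t)$, and the derivative of a rational function has only poles of order at least two, i.e.\ vanishing residues (cf.\ Proposition~\ref{PROP:ratint}), so it cannot equal the simple-pole term $-c\lambda/t$ unless $c\lambda=0$. As $\lambda\geq 1$, this forces $c=0$, the desired contradiction.
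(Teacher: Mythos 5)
Your proof is correct and follows essentially the same route as the paper's: the paper applies the derivation $\delta = tD_t$ (which commutes with $Q_t$ by Lemma~\ref{LM:commuting}) to get $Q_t^n(\delta\alpha)=\delta\alpha$, hence $tD_t(\alpha)=c\in k$ by Lemma~\ref{LM:const}, and then rules out $c\neq 0$ by noting $D_t(\mathrm{Tr}(\alpha))=\lambda c/t$ contradicts Proposition~\ref{PROP:ratint} — your twisted commutation $D_tQ_t^n=q^nQ_t^nD_t$ followed by multiplication by $t$ is the same first step in disguise, and your coefficient $-a_{\lambda-1}$ is exactly the trace used in the paper's residue argument.
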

\begin{proof}Let$\delta = tD_t$.  One has
that~$\delta Q_t = Q_t \delta$ on~$k(t)$ so Lemma~\ref{LM:commuting}
implies that~$\delta Q_t = Q_t \delta$ on~$\overline{k(t)}$.  We then
also have~$\delta Q_t^n = Q_t^n \delta$ on~$\overline{k(t)}$ so~$Q_t^n(\delta \alpha) - \delta \alpha = 0$.
Lemma~\ref{LM:const} implies~$\delta\alpha \in k$. Suppose that~$\delta \alpha = c$ for~$c\in k$.
Then~$D_t(\alpha) = c/t$. If~$\text{Tr}: k(t)(\alpha) \rightarrow k(t)$ is the trace mapping, then
~$D_t(\text{Tr}(\alpha)) = \lambda c/t$ for some nonzero~$\lambda \in \bN$.
By Proposition~\ref{PROP:ratint}, we have~$\lambda c =0$ and then~$c=0$. Now~$\alpha\in k$
follows from the third assertion of Lemma~\ref{LM:const}.
\end{proof}

\begin{lemma}\label{LM:qintlin}
Let~$\alpha(t)$ be an element in the algebraic closure of~$k(t)$.
If there exists a nonzero~$n\in \bZ$
such that~$Q_t^n(\alpha)= q^m\alpha$ for some~$m\in \bZ$,
then~$\alpha(t)=c t^{\frac{m}{n}}$ for some~$c\in k$.
\end{lemma}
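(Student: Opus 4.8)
The plan is to run the $q$-analogue of the argument in Lemma~\ref{LM:cstqd}, using the Euler operator $\delta = tD_t$, which is the natural derivation commuting with the $q$-shift. First I would dispose of the trivial case $\alpha = 0$ (take $c = 0$) and assume $\alpha \neq 0$. Since $\delta Q_t = Q_t\delta$ holds on $k(t)$ (a direct check: both sides send $f$ to $qt\,f'(qt)$), Lemma~\ref{LM:commuting} upgrades this to $\delta Q_t = Q_t \delta$, hence $\delta Q_t^n = Q_t^n \delta$, on all of $\overline{k(t)}$. Applying $\delta$ to $Q_t^n(\alpha) = q^m\alpha$ and commuting gives $Q_t^n(\delta\alpha) = q^m\,\delta\alpha$, so the ratio $\delta\alpha/\alpha$ is fixed by $Q_t^n$; by Lemma~\ref{LM:const}(ii) it lies in $k$. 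Writing $c := \delta\alpha/\alpha \in k$, we obtain the first-order equation $\delta\alpha = c\alpha$.

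The next step is to pin down $c$ as a rational number and to locate the only admissible pole or zero. Mimicking Lemma~\ref{LM:cstqd}, let $P(Y) = Y^\lambda + a_{\lambda-1}Y^{\lambda-1} + \cdots + a_0$ be the minimal polynomial of $\alpha$ over $k(t)$. Because $\delta(\alpha^i) = ic\,\alpha^i$, applying $\delta$ to $P(\alpha)=0$ produces another polynomial of degree $\le \lambda$ vanishing at $\alpha$ with leading coefficient $\lambda c$; by minimality it must be $\lambda c\cdot P$, forcing $\delta a_i = (\lambda - i)c\,a_i$ for every $i$, and in particular $\delta a_0/a_0 = \lambda c$. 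Since $a_0 \in k(t)$ factors as $d\prod (t-e_i)^{\mu_i}$, the identity $t\,a_0'/a_0 = \sum \mu_i t/(t-e_i) = \lambda c$ being constant forces, via uniqueness of partial fractions exactly as in Lemma~\ref{LM:cstqd}, every $e_i$ with $\mu_i \neq 0$ to vanish. Hence $a_0 = d\,t^\mu$ and $c = \mu/\lambda \in \bQ$. (If $c = 0$ one gets $D_t\alpha = 0$, so $\alpha \in k$ by Lemma~\ref{LM:const}(iii), then $q^m = 1$ and $m = 0$, which is the desired conclusion.)

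Finally I would match this rational exponent against the $q$-data. Choose $\gamma \in \overline{k(t)}$ with $\gamma^\lambda = t^\mu$, a concrete realization of $t^{\mu/\lambda} = t^c$; differentiating $\gamma^\lambda = t^\mu$ gives $\delta\gamma = c\gamma$, so $\delta(\alpha/\gamma) = 0$ and Lemma~\ref{LM:const}(iii) yields $\alpha = c_0\gamma$ with $c_0 \in k$. The main obstacle, and the only place the hypothesis genuinely re-enters, is upgrading $c\in\bQ$ to the exact value $\mu/\lambda = m/n$. For this, apply $Q_t^n$ to $\gamma^\lambda = t^\mu$ to get $Q_t^n(\gamma)^\lambda = q^{n\mu}\gamma^\lambda$, so $Q_t^n(\gamma) = \eta\gamma$ with $\eta^\lambda = q^{n\mu}$; comparing $q^m\alpha = Q_t^n(\alpha) = \eta\alpha$ gives $\eta = q^m$, whence $q^{m\lambda} = q^{n\mu}$ and, since $q$ is not a root of unity, $m\lambda = n\mu$. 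Therefore $c = \mu/\lambda = m/n$ and $\alpha = c_0\,t^{m/n}$, as claimed.
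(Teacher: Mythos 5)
Your proof is correct, but it takes a genuinely different and much longer route than the paper's. The paper's proof is three lines: set $\beta = t^{m/n}$, observe $Q_t^n(\alpha/\beta) = \alpha/\beta$, and apply Lemma~\ref{LM:const}~(ii) to get $\alpha/\beta \in k$ --- the same ``divide by an explicit particular solution'' trick used for Lemma~\ref{LM:intlin}. You instead transplant the machinery of Lemma~\ref{LM:cstdq}: commute the Euler derivation $\delta = tD_t$ past $Q_t^n$ via Lemma~\ref{LM:commuting}, deduce $\delta\alpha = c\alpha$ with $c \in k$, use the minimal polynomial plus partial fractions to force $a_0 = d\,t^\mu$ and $c = \mu/\lambda \in \bQ$, and finally compare $q$-shift eigenvalues to pin down $\mu/\lambda = m/n$; each of these steps checks out (including the correct adaptation that, with $\delta = tD_t$, the constancy of $\delta a_0/a_0$ yields a monomial $a_0$ rather than a contradiction as in Lemma~\ref{LM:cstdq}). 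What your longer route buys: it derives the monomial form of $\alpha$ without guessing it in advance, and it is careful about a point the paper's proof glosses over --- for an arbitrary extension of $Q_t$ to $\overline{k(t)}$ one only knows $\left(Q_t^n(t^{m/n})\right)^n = q^{mn}t^m$, so $Q_t^n(t^{m/n}) = \omega\, q^m t^{m/n}$ for some $n$-th root of unity $\omega$; your derivation of $\eta = q^m$ from the hypothesis resolves exactly this ambiguity, whereas the paper's assertion $Q_t^n(\alpha/\beta) = \alpha/\beta$ tacitly assumes $\omega = 1$. What the paper's route buys: brevity and uniformity with Lemma~\ref{LM:intlin}; note also that an intermediate shortcut exists --- apply Lemma~\ref{LM:const}~(ii) directly to $\alpha^n/t^m$, which is visibly fixed by $Q_t^n$, giving $\alpha^n = c_1 t^m$ in two lines without any choice of root.
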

\begin{proof} Let~$\beta(t) = t^\frac{m}{n}$.  We then have that
\[Q_t^n(\frac{\alpha}{\beta}) = \frac{\alpha}{\beta}\]
so $\alpha/\beta = c \in k$ by Lemma~\ref{LM:const},
that is,~$\alpha = c t^\frac{m}{n}$.\end{proof}

\section{Telescopers}\label{SECT:telescoper}
In Section~\ref{SECT:residues}, we see that nonzero residues are the obstruction
for a rational function to being rational integrable (resp.\ summable, $q$-summable).
In this section, we consider whether we can use a linear operator,
a so-called~\emph{telescoper}, to remove this
obstruction if an extra parameter is available. The importance of telescopers in the study of special functions
and combinatorial identities have been
shown in the work by Zeilberger and his
collaborators~\cite{Zeilberger1990, Almkvist1990, WilfZeilberger1990a, WilfZeilberger1990b, Wilf1992}.

Let~$k(t, x)$ be the field of rational functions in~$t$ and~$x$ over~$k$.
On the field~$k(t, x)$, we have derivations~$D_t, D_x$, shift operators~$S_t, S_x$, and
$q$-shift operators~$Q_t, Q_x$. The linear operators used below will be in the
ring~$k(t)\langle D_t \rangle$, $k(t)\langle S_t \rangle$, or~$k(t)\langle Q_t \rangle$.
For a rational function~$f\in k(t, x)$, we wish to solve the Existence Problem for
Telescopers stated in the Introduction, that is, we want to decide the existence of
linear operators~$L(t, \partial_t)$ with~$\partial_t \in \{D_t, S_t, Q_t\}$
such that
\begin{equation}\label{EQ:tele}
L(t, \partial_t)(f)=\partial_x(g)
\end{equation}
for some~$g\in k(t, x)$
and~$\partial_x \in \{D_x, \Delta_x, \Delta_{q, x}\}$.
According to the different choices of~$L$
and~$\partial_x$, we have nine types of telescopers in general, see Table~\ref{tab:ninetelepb}.
\begin{center}
\renewcommand{\arraystretch}{1.2}
\tabcolsep4.5pt
\begin{table}[ht]
\begin{tabular}{|c|c|c|c|}
  \hline
  $(L, \partial_x)   $ & $D_x$ & $\Delta_x$ & $\Delta_{q, x}$ \\ \hline
  $k(t)\langle D_t \rangle $ & $L(t, D_t)(f) = D_x(g)$   & \underline{$L(t, D_t)(f) = \Delta_x(g)$} & \underline{$L(t, D_t)(f) = \Delta_{q, x}(g)$} \\
  $k(t)\langle S_t \rangle$ & \underline{$L(t, S_t)(f) = D_x(g)$}   & $L(t, S_t)(f) = \Delta_x(g)$ & \underline{$L(t, S_t)(f) = \Delta_{q, x}(g)$}\\
  $k(t)\langle Q_t \rangle$ & \underline{$L(t, Q_t)(f) = D_x(g)$}   & \underline{$L(t, Q_t)(f) = \Delta_x(g)$} & $L(t, Q_t)(f) = \Delta_{q, x}(g)$\\[0.05in]
  \hline
\end{tabular}
\caption{Nine different types of telescoping equations}\label{tab:ninetelepb}
\end{table}
\end{center}
\vspace{-0.5cm}
The existence problem of telescopers is related to the termination of Zeilberger-style
algorithms and has been studied in~\cite{AbramovLe2002, Abramov2003, Chen2005, CCFL2010}
but, to our knowledge, our results concerning telescopers of the six types underlined in
the above table are new.
In this section, we will present a unified way to solve this problem for rational functions
by using the knowledge in the previous sections. Before the investigation of the existence of telescopers,
we first present some preparatory lemmas for later use.
\begin{define}
Let~$\sim$ be an equivalence relation on a set~$R$ and~$\sigma: R\rightarrow R$ be a bijection.
The relation~$\sim$ is said to be~\emph{$\sigma$-compatible} if
\[\sigma(r_1) \sim \sigma(r_2) \, \, \Leftrightarrow \, \, r_1 \sim r_2 \quad \text{for all~$r_1, r_2\in R$.}\]
\end{define}

If the equivalence relation~$\sim$ is compatible with a bijection~$\sigma$ on~$R$,
then a bijection on the quotient set~$R/\sim$ can be naturally induced
by~$\sigma$, for which we still use the name~$\sigma$.
We denote by~$[t]$ the equivalence class of~$t$ in~$R/\sim$.

\begin{prop}\label{PROP:periodic}
Let~$\sigma: R\rightarrow R$ be a bijection and~$\sim$
be a~$\sigma$-compatible equivalence relation on the set~$R$.
Let~$T=\{[t_1], \ldots, [t_n]\}\subset R/\sim$. If for any~$i\in \{1, \ldots, n\}$,
there exists nonzero~$m_i\in \bN$ such that~$\sigma^{m_i}([t_i])\in T$,
then there exists nonzero~$m\in \bN$ such that~$\sigma^m([t_i])=[t_i]$ for all~$i\in \{1, \ldots, n\}$.
\end{prop}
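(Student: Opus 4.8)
The plan is to show that the bijection $\sigma$ induced on the quotient $R/\sim$ makes each class $[t_i]$ \emph{periodic}, and then to take $m$ to be a common period. First I would record that, because $\sim$ is $\sigma$-compatible, $\sigma$ descends to a well-defined bijection of $R/\sim$ (this is exactly the remark preceding the statement), so every power $\sigma^j$ is injective on classes and $\sigma$ has an inverse bijection there. Since $T=\{[t_1],\ldots,[t_n]\}$, the hypothesis then provides, for every class $c\in T$, a chosen integer $m(c)\in\bN$ with $m(c)\geq 1$ and $\sigma^{m(c)}(c)\in T$.

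Next, fixing an index $i$, I would build the sequence of classes $c_0=[t_i]$ and $c_{j+1}=\sigma^{m(c_j)}(c_j)$. By construction every $c_j$ lies in $T$, and each step applies a strictly positive power of $\sigma$. Since $T$ is finite of size $n$, the pigeonhole principle yields indices $a<b$ with $c_a=c_b$. Writing $M=\sum_{j=a}^{b-1}m(c_j)$ and $M'=\sum_{j=0}^{a-1}m(c_j)$, I obtain $M\geq 1$ (the sum is nonempty and each term is positive), $M'\geq 0$, together with $c_a=\sigma^{M'}(c_0)$ and $c_b=\sigma^{M}(c_a)$; from $c_a=c_b$ this gives $\sigma^{M}(c_a)=c_a$.

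The one slightly delicate step, which I expect to be the only real content, is transferring this periodicity from $c_a$ back to $c_0=[t_i]$: I would substitute $c_a=\sigma^{M'}(c_0)$ into $\sigma^{M}(c_a)=c_a$ to get $\sigma^{M}\sigma^{M'}(c_0)=\sigma^{M'}(c_0)$, and then cancel $\sigma^{M'}$ using the injectivity of the induced bijection, concluding $\sigma^{M}([t_i])=[t_i]$ with $M\geq 1$. This is exactly where $\sigma$ being a bijection, rather than merely a self-map, is essential. Finally, denoting by $p_i\geq 1$ a period so obtained for each $i$, I would set $m=\operatorname{lcm}(p_1,\ldots,p_n)$; then $\sigma^{m}([t_i])=[t_i]$ for all $i\in\{1,\ldots,n\}$, which is the desired conclusion. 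Everything besides the pigeonhole step and the injectivity cancellation is routine bookkeeping.
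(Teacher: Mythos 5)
Your proof is correct, and it is in fact more careful than the paper's own argument, which is shorter but contains a genuine leap. The paper sets $\tilde m = \operatorname{lcm}(m_1,\ldots,m_n)$, asserts that $\sigma^{\tilde m}$ is a permutation of the finite set $T$, and then takes $m=\tilde m s$ where $\sigma^{\tilde m s}$ restricts to the identity on $T$. But the hypothesis provides only one ``landing time'' $m_i$ per class, so the claim that $\sigma^{\tilde m}$ even maps $T$ into $T$ does not follow as stated: if the induced map on classes is a $3$-cycle $a\mapsto b\mapsto c\mapsto a$ and $T=\{a,b\}$, one may take $m_1=1$ and $m_2=2$, yet $\sigma^{\operatorname{lcm}(1,2)}(a)=c\notin T$ (the proposition's conclusion still holds, with $m=3$). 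Your itinerary-plus-pigeonhole argument avoids this issue entirely: following the forward orbit inside $T$ until it repeats, and then cancelling $\sigma^{M'}$ by injectivity of the induced bijection, extracts a genuine period $p_i\geq 1$ for each class, and only at that point do you take an lcm, which is where an lcm is legitimately used. Both arguments rest on the same two pillars---finiteness of $T$ and bijectivity of the induced map---but yours deploys them in an order that actually closes the argument; what the paper's version buys is brevity, at the cost of an intermediate claim that is unjustified (and false in general), so your write-up can be read as a correction of the paper's proof rather than a mere variant of it.
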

\begin{proof}
Let~$\tilde{m}$ be the least common multiple of~$m_i$'s. Then~$\sigma^{\tilde{m}}$ is
a permutation on the finite set~$T$. Since any permutation on a finite set is idempotent, there exists an~$s\in \bN$
such that~$\sigma^{\tilde{m}s}$ is an identity on~$T$. Taking~$m=\tilde{m}s$ completes
the proof.
\end{proof}

We will specialize Proposition~\ref{PROP:periodic} to different bijections and equivalence relations.
The following examples show how to perform specializations.
\begin{example}\label{EX:intlin}
Let~$R$ be the algebraic closure of~$k(t)$. The equivalence relation~$\sim$ on~$R$ is defined by
$\alpha_1 \sim \alpha_2$ if and only if~$\alpha_1-\alpha_2\in \bZ$.
We take the shift mapping~$\sigma(\alpha(t))=\alpha(t+1)$ as the bijection.
Let~$T=\{[\alpha_1], \ldots, [\alpha_n]\}$ be such that for any~$i\in \{1, \ldots, n\}$,
$\sigma^{m_i}([\alpha_i])\in T$ for some nonzero~$m_i\in \bN$. By Proposition~\ref{PROP:periodic},
there exists nonzero~$m\in \bN$ such that~$\sigma^m(\alpha_i)-\alpha_i \in \bZ$ for all~$i\in \{1, \ldots, n\}$.
Applying Lemma~\ref{LM:intlin} to~$\alpha_i$ yields~$\alpha_i = \frac{n_i}{m} t +c_i$ for some~$n_i\in \bZ$
and~$c_i\in k$.
\end{example}

\begin{example}\label{EX:intqlin}
Let~$R$ be the algebraic closure of~$k(t)$. The equivalence relation~$\sim$ on~$R$ is defined by
$\alpha_1 \sim \alpha_2$ if and only
if~$\alpha_1/\alpha_2\in q^{\bZ}$. We take the $q$-shift mapping~$\sigma(\alpha(t))=\alpha(qt)$ as the bijection.
Let~$T=\{[\alpha_1]_q, \ldots, [\alpha_n]_q\}$ be such that for any~$i\in \{1, \ldots, n\}$,
$\sigma^{m_i}([\alpha_i])\in T$ for some nonzero~$m_i\in \bN$. By Proposition~\ref{PROP:periodic},
there exists nonzero~$m\in \bN$ such that~$\sigma^m(\alpha_i)/\alpha_i \in q^{\bZ}$
for all~$i\in \{1, \ldots, n\}$. Applying Lemma~\ref{LM:qintlin} to~$\alpha_i$
yields~$\alpha_i =  c_i t^{{n_i}/{m}}$ for some~$n_i\in \bZ$
and~$c_i\in k$.
\end{example}
\subsection{Existence of telescopers}\label{SUBSECT:existence}
The first result about the existence of telescopers was shown by Zeilberger in~\cite{Zeilberger1990}
based on the theory of holonomic D-modules. In the following, we will study the existence problems
from the residual point of view. For rational functions, the existence
of telescopers is related to the properties of residues and the commutativity between the residue
mappings and linear operators.

Starting from the simplest, we consider the telescoping relation~$L(t, D_t)(f)=D_x(g)$ for a given rational
function~$f\in k(t, x)$. Given~$\beta\in \overline{k(t)}$, view the residue mapping~$\cres_x(\underline{ \ \ }, \beta)$ as
a $\overline{k(t)}$-linear transformation
from~$\overline{k(t)}(x)$ to~$\overline{k(t)}$. For any~$\alpha, \beta\in \overline{k(t)}$, we have
\[D_t\left(\frac{\alpha}{x-\beta}\right) = \frac{D_t(\alpha)}{x-\beta} + \frac{\alpha
D_t(\beta)}{(x-\beta)^2}.\]
Then~$\cres_x(D_t(f), \beta) = D_t(\cres_x(f, \beta))$ for any~$f\in \overline{k(t)}(x)$ and~$\beta\in \overline{k(t)}$.
Assume that~$f=a/b$ with~$a, b \in k[t, x]$ and~$\gcd(a, b)=1$. Let~$\beta_1, \ldots, \beta_m$ be
the roots of~$b$ in~$\overline{k(t)}$. For each root~$\beta_i$, the continuous residue
$\cres_x(f, \beta_i) \in \overline{k(t)}$ is annihilated by a linear differential
operator~$L_{i}\in k(t)\langle D_t \rangle$ by Proposition~\ref{PROP:aflde}. Let~$L(t, D_t)$ be
the least common left multiple (LCLM) of the~$L_i$'s. Then we have~$L(\cres_x(f, \beta_i))= \cres_x(L(f), \beta_i)=0$
for all~$i$ with~$1\leq i \leq m$. So~$L(f)$ is rational integrable with respect to~$x$
by Proposition~\ref{PROP:ratint}. In summary, we have the following theorem.
\begin{theorem}\label{THM:cc}
For any~$f\in k(t, x)$, there exists a nonzero
operator~$L\in k(t)\langle D_t \rangle$ such that~$L(f)= D_x(g)$ for some~$g\in k(t, x)$.
\end{theorem}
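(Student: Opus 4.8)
The plan is to exploit the fact that $D_t$ commutes with the continuous residue map and that, by Proposition~\ref{PROP:ratint}, rational integrability of a rational function is detected exactly by the vanishing of its continuous residues. Since the paper has already assembled all the needed ingredients in the paragraph immediately preceding the statement, the proof amounts to organizing these facts into a construction of $L$.

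First I would write $f = a/b$ with $a,b \in k[t,x]$, $\gcd(a,b)=1$, and regard $f$ as an element of $\overline{k(t)}(x)$. Letting $\beta_1,\dots,\beta_m \in \overline{k(t)}$ be the roots of $b$ (viewed as a polynomial in $x$), each residue $\cres_x(f,\beta_i)$ is an element of the algebraic closure $\overline{k(t)}$. By Proposition~\ref{PROP:aflde}, every such algebraic function is annihilated by some nonzero differential operator $L_i \in k(t)\langle D_t\rangle$. I would then set $L$ to be the least common left multiple of $L_1,\dots,L_m$, so that $L$ is a single nonzero operator in $k(t)\langle D_t\rangle$ annihilating \emph{all} of the residues simultaneously: $L(\cres_x(f,\beta_i))=0$ for every $i$.

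The second key step is the commutativity relation $\cres_x(D_t(f),\beta) = D_t(\cres_x(f,\beta))$, which holds because differentiating $\alpha/(x-\beta)$ with respect to $t$ produces a simple-pole term $D_t(\alpha)/(x-\beta)$ plus a double-pole term $\alpha D_t(\beta)/(x-\beta)^2$ that contributes nothing to the residue. Iterating this gives $\cres_x(L(f),\beta_i) = L(\cres_x(f,\beta_i)) = 0$ for all $i$. Hence every continuous residue of $L(f)$ vanishes, and Proposition~\ref{PROP:ratint} yields a $g$ with $L(f) = D_x(g)$, where $g \in \overline{k(t)}(x)$; the descent argument in that proposition (the symmetric-function observation over each irreducible factor of $b$) guarantees $g \in k(t,x)$.

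I do not expect a serious obstacle here, since this is the ``easy'' continuous-continuous case in which telescopers always exist; the later $(q$-$)$discrete cases are where difficulties arise because Propositions~\ref{PROP:afrde} and~\ref{PROP:aflqe} show not every algebraic residue can be annihilated by a recurrence. The only point requiring mild care is confirming that the residues genuinely lie in $\overline{k(t)}$ (so that Proposition~\ref{PROP:aflde} applies) and that taking the LCLM keeps the operator nonzero and inside $k(t)\langle D_t\rangle$; both follow from standard Ore-algebra facts and the partial-fraction remark that the $\alpha_{i,j}$ lie in $K(\beta_1,\dots,\beta_m)$.
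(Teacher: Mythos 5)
Your proposal is correct and follows essentially the same route as the paper: annihilate each continuous residue $\cres_x(f,\beta_i)\in\overline{k(t)}$ via Proposition~\ref{PROP:aflde}, take the LCLM, use the commutation $\cres_x(D_t(f),\beta)=D_t(\cres_x(f,\beta))$ to conclude all residues of $L(f)$ vanish, and invoke Proposition~\ref{PROP:ratint}. The only cosmetic difference is that you re-derive the descent of $g$ to $k(t,x)$, which is already built into the statement of Proposition~\ref{PROP:ratint} since it is formulated over $K(x)$ with $K=k(t)$.
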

However, the situation in other cases turns out to be more involved.
For  the rational function~$f=1/(t^2+x^2)$, Abramov and Le~\cite{Le2001, AbramovLe2002} showed that
there is no telescoper in~$k(t)\langle S_t \rangle$ such that~$L(f)=\Delta_x(g)$ for any~$g\in k(t, x)$.
In other cases, there are two main reasons for non-existence: one is the non-commutativity between
linear  operators~$\partial_t\in \{D_t, S_t, Q_t\}$ and residue mappings, the other is
that not all algebraic functions would satisfy linear ($q$)-recurrence relations.
So it is natural that rational functions are of special forms if telescopers
exist.

Let~$f\in k(t, x)$ and~$\partial_x\in \{D_x, \Delta_x, \Delta_{q, x}\}$.
Then~$f = \partial_x(g) + r$ with~$g, r\in k(t, x)$
and~$r$ being the residual form of~$f$ with respect to~$\partial_x$ {(see Section~\ref{SUBSECT:resform})}.
Since linear operators~$L(t, \partial_t)$ with~$\partial_t\in \{D_t, S_t, Q_t\}$
commute with  the linear operator~$\partial_x\in \{D_x, \Delta_x, \Delta_{q, x}\}$, a rational function has a telescoper if and only if its residual form does.
From now on, we always assume that the given rational function is in its residual form.
We will also use the fact~\cite[Lemma 1]{AbramovLe2002} that the sum~$f_1+f_2$ has a telescoper if both~$f_1$ and~$f_2$ do.
To be more precise, if~$L_1, L_2$ are telescopers for~$f_1, f_2$, respectively, then the LCLM of~$L_1, L_2$
is a telescoper for~$f_1+f_2$.

\subsubsection{Telescopers with respect to~$D_x$}\label{SUBSECT:existencediff}
Let~$f\in k(t, x)$ be a residual form, that is,
\begin{equation}\label{EQ:cresform}
    f = \sum_{i=1}^m \frac{\alpha_i}{x-\beta_i}, \quad \text{where~$\alpha_i, \beta_i\in \overline{k(t)}$ and the~$\beta_i$ are pairwise distinct.}
\end{equation}
\begin{theorem}\label{THM:dc}
Let~$f\in k(t, x)$ be as in~\eqref{EQ:cresform}. Then~$f$
has a telescoper~$L$ in~$k(t)\langle S_t\rangle$ such that~$L(t, S_t)(f)= D_x(g)$ for some~$g\in k(t, x)$
if and only if all the~$\beta_i$ are in~$k$.
\end{theorem}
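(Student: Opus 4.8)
The plan is to translate everything into the language of continuous residues. Since $L(t,S_t)$ has coefficients in $k(t)$ it maps $k(t,x)$ to itself and commutes with $D_x$, so by Proposition~\ref{PROP:ratint} the telescoping relation $L(f)=D_x(g)$ with $g\in k(t,x)$ holds for some $g$ if and only if every continuous residue of $L(f)$ vanishes. The single computation that governs both directions is the way $S_t$ moves a simple pole, namely
\[
S_t\!\left(\frac{\alpha}{x-\beta}\right)=\frac{S_t(\alpha)}{x-S_t(\beta)},
\]
so that $S_t$ simultaneously transports the residue and the pole location. This is precisely the feature that distinguishes this case from the $D_t$ case treated in Theorem~\ref{THM:cc}, where a pole stays put.

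For the ``if'' direction I would assume all $\beta_i\in k$. Then $S_t^j(\beta_i)=\beta_i$ for every $j$, so the common denominator $\prod_i(x-\beta_i)$ lies in $k[x]$; writing $f=a/\prod_i(x-\beta_i)$ with $a\in k(t)[x]$ and evaluating at $x=\beta_i$ gives $\alpha_i=a(t,\beta_i)/\prod_{i'\neq i}(\beta_i-\beta_{i'})\in k(t)$. Each nonzero $\alpha_i$, being in $k(t)$, is killed by the first order operator $L_i=S_t-S_t(\alpha_i)/\alpha_i\in k(t)\langle S_t\rangle$. Taking $L$ to be the LCLM of the $L_i$ (equivalently, invoking that a sum of terms each admitting a telescoper admits one, \cite[Lemma~1]{AbramovLe2002}) yields $L(\alpha_i)=0$ for all $i$, and since the poles are fixed by $S_t$ we get $L(f)=\sum_i L(\alpha_i)/(x-\beta_i)=0=D_x(0)$.

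For the converse I would argue by contradiction via a leading-coefficient collapse along a single orbit. Suppose $L=\sum_{j=0}^d c_j(t)S_t^j$ is a nonzero telescoper and that some $\beta:=\beta_{i_0}\notin k$. For $n\in\bZ$ let $A_n\in\overline{k(t)}$ be the residue of $f$ at $S_t^n(\beta)$ (so $A_n=\alpha_i$ when $S_t^n(\beta)=\beta_i$ and $A_n=0$ otherwise); only finitely many $A_n$ are nonzero, and $A_0=\alpha_{i_0}\neq0$. Because $\beta\notin k$, Lemma~\ref{LM:const}(i) guarantees that the poles $S_t^n(\beta)$ are pairwise distinct, and that any equality $S_t^j(\beta_{i'})=S_t^n(\beta)$ forces $\beta_{i'}$ to lie in the $S_t$-orbit of $\beta$; hence the residue of $L(f)$ at $S_t^n(\beta)$ draws contributions only from this orbit. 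Reindexing the pole-shifting identity, that residue equals $\sum_{j=0}^d c_j\,S_t^j(A_{n-j})$, and integrability of $L(f)$ forces $\sum_{j=0}^d c_j\,S_t^j(A_{n-j})=0$ for every $n$. Letting $N$ be the largest index with $A_N\neq0$ and evaluating at $n=N+d$ leaves only the $j=d$ term, giving $c_d\,S_t^d(A_N)=0$, whence $c_d=0$; descending through $n=N+d-1,\dots,N$ forces $c_{d-1}=\dots=c_0=0$, so $L=0$, a contradiction. Thus every $\beta_i\in k$.

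I expect the delicate point to be the bookkeeping in the converse: one must be sure that distinct $S_t$-orbits of poles do not interfere, so that the vanishing of the top residue genuinely isolates $c_d$. This is exactly where Lemma~\ref{LM:const}(i) (no nonconstant element of $\overline{k(t)}$ is $S_t$-periodic) is indispensable; once orbit-disjointness and distinctness of the $S_t^n(\beta)$ are secured, the extremal residue at the maximal shift dismantles $L$ one coefficient at a time. Everything else — the reduction to residues, the evaluation formula for $\alpha_i$, and the first order annihilators — is routine.
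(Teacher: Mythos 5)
Your proof is correct, and while your ``if'' direction is essentially the paper's (with all $\beta_i\in k$ the residues $\alpha_i$ lie in $k(t)$, each is killed by the first-order operator $S_t-S_t(\alpha_i)/\alpha_i$, and the LCLM annihilates $f$ outright, so $g=0$ works), your converse takes a genuinely different route. The paper writes $L=\sum_\ell e_\ell S_t^\ell$, looks at the lowest index $\ell_0$ with $e_{\ell_0}\neq 0$, and argues that vanishing of the residues of $L(f)$ forces each element of the finite set $T=\{S_t^{\ell_0}(\beta_1),\dots,S_t^{\ell_0}(\beta_m)\}$ to meet $T$ again under a positive shift; it then feeds this into the combinatorial Proposition~\ref{PROP:periodic} to get $S_t^m(\beta_i)=\beta_i$ for all $i$, and concludes $\beta_i\in k$ by Lemma~\ref{LM:const}~(i). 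You instead fix one alleged non-constant pole $\beta$, use Lemma~\ref{LM:const}~(i) once to ensure its shift orbit consists of pairwise distinct points (orbit-disjointness from the other poles is automatic, since $S_t^j(\beta_{i'})=S_t^n(\beta)$ just means $\beta_{i'}=S_t^{n-j}(\beta)$), identify the residues of $L(f)$ along that orbit with the convolution $\sum_j c_j S_t^j(A_{n-j})$, and then kill the coefficients of $L$ one at a time from the top index $n=N+d$ downward, reaching $L=0$ --- a contradiction rather than a periodicity statement. Your version is more self-contained: it bypasses Proposition~\ref{PROP:periodic} entirely (whose orbit-return hypothesis is the one delicate point in the paper's argument) and makes the cancellation mechanism explicit. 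What the paper's formulation buys is uniformity: the same orbit-return-plus-constancy-lemma template is reused verbatim, with the appropriate equivalence relations and Lemmas~\ref{LM:cstdq}, \ref{LM:intlin}, \ref{LM:cstqd}, \ref{LM:qintlin}, in the other mixed cases (Theorems~\ref{THM:qc}, \ref{THM:cd}, \ref{THM:dd}, \ref{THM:qd}, \ref{THM:dq}, \ref{THM:qq}), whereas your extremal argument would have to be redone with $\bZ$-orbits or $q^\bZ$-orbits in place of single points to cover those. One shared tacit hypothesis worth flagging: both your argument (via $A_0=\alpha_{i_0}\neq 0$) and the paper's assume the $\alpha_i$ in \eqref{EQ:cresform} are nonzero, which is implicit in calling \eqref{EQ:cresform} a residual form.
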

\begin{proof}
Suppose that there exists a nonzero~$L\in k(t)\langle S_t\rangle$ such that~$L(t, S_t)(f)=D_x(g)$
for some~$g\in k(t, x)$. Write~$L=\sum_{\ell=0}^{\rho} e_{\ell}S_t^{\ell}$ with~$e_{\ell}\in k(t)$
and~{$e_\rho = 1$}.
Then
\[L(f) = \sum_{\ell=0}^{\rho} \sum_{i=1}^m \frac{e_{\ell}S_t^{\ell}(\alpha_i)}{x-S_t^{\ell}(\beta_i)}.\]
Assume that~$\ell_0$ is the first index in~$\{0, 1, \ldots, \rho\}$ such that~$e_{\ell_0}\neq 0$.
Since~$L(f)$ is rational integrable in~$k(t, x)$ with respect to~$D_x$, all residues of~$L(f)$ are
zero by Proposition~\ref{PROP:ratint}. In particular, the
set~$T=\{S_t^{\ell_0}(\beta_1), \ldots, S_t^{\ell_0}(\beta_m)\}$ satisfies the property
that for any~$i\in \{1, \ldots, m\}$,
there exists nonzero~$m_i\in \bN$ such that~$S_t^{\ell_0+m_i}(\beta_i)\in T$.
By taking equality as the equivalence
relation and the shift mapping as the bijection in Proposition~\ref{PROP:periodic},
there exists nonzero~$m\in \bN$ such
that~$S_t^{\ell_0+m}(\beta_i)= \beta_i$ for all~$i\in \{1, \ldots, m\}$.  By Lemma~\ref{LM:const}~(i) and the assumption
that~$k$ is algebraically closed, all the~$\beta_i$ are in~$k$.

For the opposite implication, it suffices to show that each fraction~$\alpha_i/(x-\beta_i)$ with~$\beta_i\in k$
has a telescoper in~$k(t)\langle S_t\rangle$. According to the process of partial fraction decomposition,
~$\alpha_i\in k(t)(\beta_i)$ for any~$i$ with~$1\leq i\leq m$. Then~$\alpha_i\in k(t)$, which is annihilated
by the operator~$L_i = S_t - {\alpha_i(t+1)}/{\alpha_i(t)}$. Moreover,
$L_i(\alpha_i/(x-\beta_i))=L_i(\alpha_i)/(x-\beta_i)=0$. So the LCLM of the~$L_i$'s is a
telescoper for~$f$. This completes the proof.
\end{proof}

\begin{theorem}\label{THM:qc}
Let~$f\in k(t, x)$ be as in~\eqref{EQ:cresform}. Then~$f$
has a telescoper~$L$ in~$k(t)\langle Q_t\rangle$ such that~$L(t, Q_t)(f)= D_x(g)$ for some~$g\in k(t, x)$
if and only if all the~$\beta_i$ are in~$k$.
\end{theorem}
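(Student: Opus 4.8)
The plan is to prove Theorem~\ref{THM:qc} by adapting the proof of Theorem~\ref{THM:dc} almost verbatim, replacing the ordinary shift $S_t$ by the $q$-shift $Q_t$ and Lemma~\ref{LM:const}~(i) by Lemma~\ref{LM:const}~(ii). Since a rational function has a telescoper if and only if its residual form does and $f$ is already in the continuous residual form~\eqref{EQ:cresform}, I work directly with $f=\sum_{i=1}^m \alpha_i/(x-\beta_i)$, assuming without loss of generality that each $\alpha_i\neq 0$.

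For sufficiency I would show that every summand $\alpha_i/(x-\beta_i)$ with $\beta_i\in k$ carries its own telescoper and then combine. If $\beta_i\in k$, the partial-fraction computation forces $\alpha_i\in k(t)(\beta_i)=k(t)$, so $\alpha_i$ is an honest rational function of $t$ and is annihilated by the first-order operator $L_i=Q_t-\alpha_i(qt)/\alpha_i(t)\in k(t)\langle Q_t\rangle$. Because $\beta_i$ is constant, $Q_t$ fixes $x-\beta_i$, whence $L_i(\alpha_i/(x-\beta_i))=L_i(\alpha_i)/(x-\beta_i)=0=D_x(0)$, so $L_i$ is a telescoper for the $i$-th summand. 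Taking the least common left multiple of the $L_i$ (using that a finite sum of functions each admitting a telescoper again admits one) produces a telescoper for $f$.

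The substance is in necessity, which is where I expect the main obstacle. Suppose $L=\sum_{\ell=0}^{\rho}e_\ell Q_t^\ell$ with $e_\rho\neq 0$ satisfies $L(f)=D_x(g)$. Expanding gives $L(f)=\sum_{\ell=0}^{\rho}\sum_{i=1}^m e_\ell Q_t^\ell(\alpha_i)/(x-Q_t^\ell(\beta_i))$, and by Proposition~\ref{PROP:ratint} every continuous residue of this expression vanishes. The key step is to track the poles of maximal $q$-shift: for each $i$ the residue at $Q_t^\rho(\beta_i)$ receives the nonzero contribution $e_\rho Q_t^\rho(\alpha_i)$ from the leading term, and since the $\beta_i$ are pairwise distinct this is the only $\ell=\rho$ contribution there, so it can only be cancelled by a term with $\ell<\rho$ and $Q_t^\ell(\beta_j)=Q_t^\rho(\beta_i)$, that is $Q_t^{\rho-\ell}(\beta_i)=\beta_j$ with $\rho-\ell>0$. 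Thus every $\beta_i$ has a strictly positive $Q_t$-power lying in $\{\beta_1,\ldots,\beta_m\}$, which is exactly the hypothesis of Proposition~\ref{PROP:periodic} with $\sigma=Q_t$ and equality as the (trivially $\sigma$-compatible) equivalence relation.

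Applying Proposition~\ref{PROP:periodic} yields a single nonzero $m\in\bN$ with $Q_t^m(\beta_i)=\beta_i$ for all $i$, and Lemma~\ref{LM:const}~(ii) then gives $\beta_i\in k$, completing necessity. The genuine obstacle is the residue-cancellation bookkeeping just described: one must argue that the extremal term, being nonzero and unique among its shift level, forces the existence of a forward $Q_t$-shift relating two of the $\beta_i$, and this is precisely where the distinctness of the $\beta_i$ and the normalization $\alpha_i\neq 0$ are used. The remaining ingredients --- the sufficiency construction, the implication from $q$-periodicity to membership in $k$, and the commutation of $L$ with $D_x$ --- are routine and follow the $S_t$ case of Theorem~\ref{THM:dc} line for line.
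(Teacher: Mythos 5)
Your proof is correct and takes essentially the same route as the paper, which proves this theorem simply by repeating the proof of Theorem~\ref{THM:dc} with $S_t$ replaced by $Q_t$ and Lemma~\ref{LM:const}~(i) replaced by Lemma~\ref{LM:const}~(ii): vanishing of continuous residues via Proposition~\ref{PROP:ratint}, the extremal-pole cancellation feeding Proposition~\ref{PROP:periodic} with equality as the equivalence relation, and the first-order operators $Q_t-\alpha_i(qt)/\alpha_i(t)$ combined by LCLM for sufficiency. Your only (harmless) deviation is running the cancellation bookkeeping at the leading index $\rho$ rather than at the lowest nonzero index $\ell_0$ as in the paper's proof of Theorem~\ref{THM:dc}; if anything this matches the hypothesis of Proposition~\ref{PROP:periodic} more directly.
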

\begin{proof} The proof proceeds in a similar way as above replacing~$S_t$ by~$Q_t$
and  Lemma~\ref{LM:const}~(i) by~Lemma~\ref{LM:const}~(ii).
\end{proof}
\begin{example}
Let~$f=1/(x+t)$. Since the root of~$x+t$ in~$\overline{k(t)}$ is~$t$, which is not in~$k$, $f$ has no
telescoper in either~$k(t)\langle S_t\rangle$ or~$k(t)\langle Q_t\rangle$
with respect to~$D_x$ by Theorems~\ref{THM:dc} and~\ref{THM:qc}.
\end{example}

\subsubsection{Telescopers with respect to~$\Delta_x$}\label{SUBSECT:existenceshift}
Let~$f\in k(t, x)$ be of the form
\begin{equation}\label{EQ:dresform}
    f = \sum_{i=1}^m\sum_{j=1}^{n_i} \frac{\alpha_{i, j}}{(x-\beta_i)^j},
\end{equation}
where~$\alpha_{i,j}, \beta_i\in \overline{k(t)}$, $\alpha_{i, n_i}\neq 0$, and the~$\beta_i$ are
in distinct~$\bZ$-orbits.
\begin{theorem}\label{THM:cd}
Let~$f\in k(t, x)$ be as in~\eqref{EQ:dresform}. Then~$f$
has a telescoper~$L$ in~$k(t)\langle D_t\rangle$ such that~$L(t, D_t)(f)= \Delta_x(g)$
for some~$g\in k(t, x)$ if and only if all the~$\beta_i$ are in~$k$.
\end{theorem}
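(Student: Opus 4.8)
The plan is to prove Theorem~\ref{THM:cd} by mimicking the structure already used for Theorems~\ref{THM:dc} and~\ref{THM:qc}, but now the telescoper lives in $k(t)\langle D_t\rangle$ (a differential operator in $t$) while the obstruction is with respect to $\Delta_x$ (summability in $x$). The governing principle, established in Section~\ref{SUBSECT:existence}, is that $f$ has a telescoper if and only if its residual form does, and that $L(t,D_t)$ commutes with $\Delta_x$. Since $f$ is already in discrete residual form~\eqref{EQ:dresform}, I will work directly with it.

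For the forward direction, suppose a nonzero $L = \sum_{\ell=0}^\rho e_\ell D_t^\ell \in k(t)\langle D_t\rangle$ satisfies $L(f) = \Delta_x(g)$. The key computation is how $D_t$ acts on a single term $\alpha/(x-\beta)^j$: by the chain rule one gets contributions $D_t(\alpha)/(x-\beta)^j$ together with higher-order poles $j\alpha D_t(\beta)/(x-\beta)^{j+1}$, all supported at the \emph{same} point $\beta$ (unlike the $S_t$ and $Q_t$ cases, where the pole location shifts). Thus $D_t$ preserves each $\bZ$-orbit of poles, and $\dres_x$ commutes with $D_t$ in the sense that $\dres_x(D_t(f),[\beta],j) = D_t(\dres_x(f,[\beta],j))$, analogous to the continuous-residue commutation relation derived just before Theorem~\ref{THM:cc}. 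Because $L(f)$ is $\Delta_x$-summable, Proposition~\ref{PROP:ratsum} forces $\dres_x(L(f),[\beta_i],n_i)=0$, i.e. $L(\dres_x(f,[\beta_i],n_i))=0$. So each top-multiplicity discrete residue $\dres_x(f,[\beta_i],n_i)$, an element of $\overline{k(t)}$, is annihilated by the \emph{differential} operator $L$. Here is where the argument must diverge from the shift case: a differential annihilator places \emph{no} restriction on an algebraic function (Proposition~\ref{PROP:aflde} guarantees every element of $\overline{k(t)}$ has one), so I cannot extract information about the $\beta_i$ from annihilation of the residues alone. The constraint instead comes from how $L$ moves the pole \emph{structure}.

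The real obstruction, and the step I expect to be the main difficulty, is to show the $\beta_i$ must lie in $k$ purely from the differential structure. The mechanism must be that $D_t$ creates higher-order poles at $\beta_i$ whenever $D_t(\beta_i)\neq 0$: applying $L$ raises the multiplicity of the pole at each $\beta_i$ (the term $D_t^\ell$ produces a pole of order $n_i+\ell$ at $\beta_i$ with leading discrete residue proportional to $\alpha_{i,n_i}\prod(\cdots)(D_t\beta_i)^\ell$), and for $L(f)$ to be summable \emph{all} these top residues must vanish. If $D_t(\beta_i)\neq 0$, the highest-multiplicity term cannot be cancelled by lower-order data, forcing $e_\rho\,\alpha_{i,n_i}(D_t\beta_i)^\rho=0$; since $e_\rho\neq 0$ and $\alpha_{i,n_i}\neq 0$, this gives $D_t(\beta_i)=0$, whence $\beta_i\in k$ by Lemma~\ref{LM:const}~(iii). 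This is the heart of the proof and requires carefully tracking the leading pole terms through the action of each $D_t^\ell$.

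For the converse, assume every $\beta_i\in k$. Then $D_t(\beta_i)=0$, so $D_t$ acts only on the numerators: $D_t(\alpha_{i,j}/(x-\beta_i)^j) = D_t(\alpha_{i,j})/(x-\beta_i)^j$, preserving both the pole location and its order. As in the proof of Theorem~\ref{THM:dc}, partial-fraction decomposition gives $\alpha_{i,j}\in k(t)(\beta_i)=k(t)$, so each $\alpha_{i,j}$ is a rational function of $t$ annihilated by some nonzero $L_{i,j}\in k(t)\langle D_t\rangle$ (Proposition~\ref{PROP:aflde}, or simply a first-order operator $D_t - D_t(\alpha_{i,j})/\alpha_{i,j}$). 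Since $\beta_i\in k$, we have $L_{i,j}(\alpha_{i,j}/(x-\beta_i)^j) = L_{i,j}(\alpha_{i,j})/(x-\beta_i)^j = 0$, so each individual fraction has a differential telescoper with certificate $g=0$. Taking the LCLM of all the $L_{i,j}$ yields a single telescoper $L$ annihilating $f$ entirely, so $L(f)=0=\Delta_x(0)$, completing the proof.
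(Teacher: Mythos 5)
Your proposal is correct and follows essentially the same route as the paper's proof: the forward direction tracks the leading pole term $e_\rho\,(n_i)_\rho\,\alpha_{i,n_i}(D_t(\beta_i))^\rho/(x-\beta_i)^{n_i+\rho}$ of $L(f)$, invokes Proposition~\ref{PROP:ratsum} (using that the $\beta_i$ lie in distinct $\bZ$-orbits, so this top coefficient is itself a discrete residue and must vanish) to force $D_t(\beta_i)=0$ and hence $\beta_i\in k$ by Lemma~\ref{LM:const}~(iii), while the converse uses the first-order annihilators $D_t - D_t(\alpha_{i,j})/\alpha_{i,j}$ and their LCLM with certificate $g=0$, exactly as in the paper. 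The only blemish is your side remark that $\dres_x(D_t(f),[\beta],j)=D_t(\dres_x(f,[\beta],j))$, which is false in general when $D_t(\beta)\neq 0$ (a pole of order $j-1$ at $\beta+\ell$ contributes a cross term $(j-1)D_t(\beta)\,\alpha_{j-1,\ell}$ at order $j$, unlike the continuous-residue case), but since you explicitly abandon that line of reasoning before the decisive step, it does not affect the correctness of the proof.
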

\begin{proof}
Suppose that there exists a nonzero~$L\in k(t)\langle D_t\rangle$ such that~$L(t, D_t)(f)=\Delta_x(g)$
for some~$g\in k(t, x)$. Write~$L=\sum_{\ell=0}^{\rho} e_{\ell}D_t^{\ell}$ with~$e_{\ell}\in k(t)$.
By induction on~$\ell$, we get
\[D_t^{\ell}\left(\frac{\alpha_{i, n_i}}{(x-\beta_i)^{n_i}}\right) =
\frac{(n_i)_{\ell}\alpha_{i, n_i}(D_t(\beta_i))^{\ell}}{(x-\beta_i)^{n_i+\ell}} + \, \, \text{lower terms,}\]
where~$(n_i)_{\ell} = n_i (n_i+1) \cdots (n_i+\ell-1)$. Then we have
\[L(f) = \sum_{i=1}^m \frac{(n_i)_{\rho}\alpha_{i, n_i}(D_t(\beta_i))^{\rho}}{(x-\beta_i)^{n_i+\rho}}
+\,\, \text{lower terms.}\]
Since~$L(f)$ is rational summable with respect to~$\Delta_x$ and the~$\beta_i$ are in distinct~$\bZ$-orbits,
we get~$(n_i)_{\rho}\alpha_{i, n_i}(D_t(\beta_i))^{\rho}=0$ for all~$i\in \{1, \ldots, m\}$ by Proposition~\ref{PROP:ratsum}.
Since~$\alpha_{i, n_i}\neq 0$ and~$(n_i)_{\rho}>0$, $D_t(\beta_i)=0$, which implies that~$\beta_i\in k$
{by Lemma~\ref{LM:const}~(iii)}.

For the opposite implication, the proof is similar to that of Theorem~\ref{THM:dc}.
Let~$L_{i,j}$ be the operator~$D_t - D_t(\alpha_{i, j})/\alpha_{i,j}\in k(t)\langle D_t \rangle$. Then the LCLM of the~$L_{i, j}$ is
a telescoper for~$f$ with respect to~$\Delta_x$.
\end{proof}

\begin{example}\label{EX:transcendental} Let
\[f = \frac{1}{x^2 - t} =\frac{1}{2\sqrt{t}}\left(\frac{1}{x - \sqrt{t}} - \frac{1}{x +\sqrt{t}}\right).\]
Note that $f$ is already in residual form with respect to~$\Delta_x$. By
Theorem~\ref{THM:cd}, there is no linear differential
operator~$L(t,D_t) \in k(t)\langle D_t\rangle$ and $g \in k(t,x)$ such that $L(t,D_t)f = \Delta_x(g)$. Furthermore,
Proposition~3.1 in~\cite{Hardouin2008} and the descent argument similar to that
given in the proof of Corollary~3.2 of~\cite{Hardouin2008} (or Section 1.2.1
of~\cite{DH2011}) implies that the sum
\[F(t,x) = \sum_{i=1}^{x -1}\frac{1}{{ i}^2-t} \ \  \mbox { (satisfying~$S_x(F) - F = f$) }\]
 satisfies no polynomial differential equation $P(t,x,F,D_tF,D_t^2F, \ldots ) = 0$.
\end{example}

The following theorem is the same as~\cite[Theorem 1]{AbramovLe2002}. We give an alternative
proof using the knowledge developed in previous sections.

\begin{theorem}\label{THM:dd}
Let~$f\in k(t, x)$ be as in~\eqref{EQ:dresform}. Then~$f$
has a telescoper~$L$ in~$k(t)\langle S_t\rangle$ such that~$L(t, S_t)(f)= \Delta_x(g)$
for some~$g\in k(t, x)$ if and only if all the~$\beta_i=r_i t +c_i$ with~$r_i\in \bQ$ and~$c_i\in k$.
\end{theorem}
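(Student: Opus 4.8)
The plan is to mirror the structure of the proof of Theorem~\ref{THM:cd}, but now the relevant obstruction comes from the $\bZ$-orbit structure under the shift $S_t$ rather than from differentiation killing constants. Write $L = \sum_{\ell=0}^{\rho} e_\ell S_t^\ell$ with $e_\ell \in k(t)$ and (after normalizing) $e_\rho = 1$, and let $\ell_0$ be the smallest index with $e_{\ell_0} \neq 0$. Applying $S_t^\ell$ to a term $\alpha_{i,j}/(x-\beta_i)^j$ produces $S_t^\ell(\alpha_{i,j})/(x - S_t^\ell(\beta_i))^j$, so
\[
L(f) = \sum_{\ell=\ell_0}^{\rho} \sum_{i=1}^m \sum_{j=1}^{n_i} \frac{e_\ell S_t^\ell(\alpha_{i,j})}{(x - S_t^\ell(\beta_i))^j}.
\]
The hypothesis $L(t,S_t)(f) = \Delta_x(g)$ means $L(f)$ is rational summable, so by Proposition~\ref{PROP:ratsum} all of its discrete residues must vanish. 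The key observation is that the poles appearing in $L(f)$ are the various shifts $S_t^\ell(\beta_i) = \beta_i + \ell$ of the original poles, so the distinct $\bZ$-orbits occurring are exactly $[\beta_1], \ldots, [\beta_m]$ (the shifts stay within the same orbit).

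Next I would extract an orbit-stability statement. Consider the top multiplicity $j = n_i$ for each $i$: the coefficient $e_{\ell_0} S_t^{\ell_0}(\alpha_{i,n_i})$ contributes to the discrete residue of $L(f)$ at the orbit $[\beta_i]$. The crucial point is to show that, because summability forces the discrete residues to vanish and the operator has a leading nonzero coefficient at $\ell_0$, the orbits of the $\beta_i$ must be permuted among themselves by $S_t^{\ell_0}$ in a way that returns into the finite set $T = \{[\beta_1], \ldots, [\beta_m]\}$. Then I would apply Proposition~\ref{PROP:periodic}, taking $R = \overline{k(t)}$, the equivalence relation $\alpha_1 \sim \alpha_2 \Leftrightarrow \alpha_1 - \alpha_2 \in \bZ$, and the bijection $\sigma(\alpha(t)) = \alpha(t+1)$, exactly as in Example~\ref{EX:intlin}. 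This yields a nonzero $m \in \bN$ with $S_t^m(\beta_i) - \beta_i \in \bZ$ for all $i$, and Lemma~\ref{LM:intlin} then forces $\beta_i = \frac{n_i}{m} t + c_i$ with $n_i \in \bZ$, $c_i \in k$, i.e.\ $\beta_i = r_i t + c_i$ with $r_i \in \bQ$.

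For the converse I would show directly that each $\beta_i = r_i t + c_i$ gives a summand $\sum_j \alpha_{i,j}/(x-\beta_i)^j$ admitting a telescoper in $k(t)\langle S_t\rangle$, after which the LCLM of the individual telescopers handles the full sum (using the fact, cited after Table~\ref{tab:ninetelepb}, that a sum of terms with telescopers has a telescoper). The idea is that when $\beta_i = r_i t + c_i$ is linear in $t$ with rational slope, a suitable power $S_t^{N}$ shifts $\beta_i$ by an integer, namely $S_t^{N}(\beta_i) = \beta_i + N r_i \in \beta_i + \bZ$ when $N$ clears the denominator of $r_i$; this brings $S_t^N(\beta_i)$ back into the same pole (up to an integer shift in $x$) so that an appropriate combination of shifts produces something summable with respect to $\Delta_x$. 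I would construct an operator of the form $S_t^{N} - c(t)$ (or a product of such, one per orbit) that moves the pole $\beta_i$ to an integer translate of itself while adjusting the numerators $\alpha_{i,j} \in k(t)$ so that the resulting discrete residues cancel against the $\Delta_x$-exact part.

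The main obstacle I anticipate is the converse direction: unlike the $D_x$ case in Theorem~\ref{THM:cd}, here annihilating the numerators is not enough, since the relevant obstruction is a discrete residue (a \emph{sum} over an orbit) rather than a single residue. One must arrange for $L$ both to bring the shifted pole $S_t^\ell(\beta_i) = \beta_i + \ell r_i$ back into the orbit $[\beta_i]$ (which requires $r_i \in \bQ$ so that some integer power of the shift lands on an integer translate) and to make the accumulated numerators sum to zero over the orbit so that Lemma~\ref{LM:dres} applies. Getting these two requirements to hold simultaneously — controlling the numerators $S_t^\ell(\alpha_{i,j})$, which themselves are elements of $k(t)$ depending on $t$, while the pole returns to its orbit — is the delicate bookkeeping step, and I expect it to require a careful choice of the telescoper's order (a multiple of the denominator of $r_i$) together with an explicit summable certificate built term by term as in Lemma~\ref{LM:dres}.
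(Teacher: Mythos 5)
Your forward direction rests on a false statement that undercuts the whole argument: you assert that ``the poles appearing in $L(f)$ are the various shifts $S_t^\ell(\beta_i) = \beta_i + \ell$ of the original poles, so the distinct $\bZ$-orbits occurring are exactly $[\beta_1],\ldots,[\beta_m]$.'' This is wrong. The operator $S_t$ shifts $t$, not $x$, so $S_t^\ell(\beta_i)$ is the algebraic function $\beta_i(t+\ell)$, which in general lies in a completely different $\bZ$-orbit from $\beta_i$: for $\beta_i = t^2$ one gets $(t+\ell)^2 \notin t^2 + \bZ$, and for $\beta_i = \sqrt{t}$ one gets $\sqrt{t+\ell}$. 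In fact, the existence of some $\ell>0$ with $S_t^\ell(\beta_i)-\beta_i \in \bZ$ is, by Lemma~\ref{LM:intlin}, essentially equivalent to $\beta_i = r_i t + c_i$ with $r_i \in \bQ$ --- i.e.\ it is the \emph{conclusion} of Theorem~\ref{THM:dd}, not a free observation; if your claim were true, every residual form would pass the orbit test and the theorem would be false (e.g.\ $1/(x-t^2)$ would have a telescoper, which it does not). As a consequence, the ``orbit-stability statement'' that you yourself flag as ``the crucial point to show'' is never established, and it cannot be established along the lines you indicate, since your stated picture makes it vacuous. The actual derivation (this is what the paper does) is: normalize $e_0 \neq 0$; split $f$ into fixed-multiplicity layers $f_\lambda = \sum_i \alpha_{i,n_\lambda}/(x-\beta_i)^{n_\lambda}$, which must telescope separately because $S_t$ and $S_x$ preserve pole multiplicities; then, since $e_0\alpha_{\lambda,n_\lambda}\neq 0$ contributes to the discrete residue of $L(f_\lambda)$ at the orbit $[\beta_\lambda]$ of multiplicity $n_\lambda$, the vanishing of that residue (Proposition~\ref{PROP:ratsum}) forces some other term $e_\ell S_t^\ell(\alpha_{i',n_\lambda})/(x-S_t^\ell(\beta_{i'}))^{n_\lambda}$ with $\ell \neq 0$ to have its pole in the \emph{same} $\bZ$-orbit. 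Only this cancellation argument produces the hypothesis of Proposition~\ref{PROP:periodic} (in the setting of Example~\ref{EX:intlin}), after which Lemma~\ref{LM:intlin} finishes as you outline.

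For the converse you correctly identify what is needed but stop at ``I would construct \dots'' and ``I expect it to require \dots delicate bookkeeping.'' The bookkeeping is in fact a one-line construction: write $r_i = p_i/m$ with a common denominator $m$ for all $i$, note that $\alpha_{i,j}\in k(t)$ (because $\beta_i \in k(t)$, the residues lie in $k(t)(\beta_i)=k(t)$), and take $L_{i,j} = \alpha_{i,j}(t)\,S_t^m - \alpha_{i,j}(t+m)$. Then
\begin{equation*}
L_{i,j}\left(\frac{\alpha_{i,j}}{(x-\beta_i)^j}\right)
= \alpha_{i,j}(t)\,\alpha_{i,j}(t+m)\left(\frac{1}{(x-\beta_i-p_i)^j} - \frac{1}{(x-\beta_i)^j}\right),
\end{equation*}
whose discrete residue at $[\beta_i]$ of multiplicity $j$ is zero, hence it is summable by Lemma~\ref{LM:dres} and Proposition~\ref{PROP:ratsum}; the LCLM of the $L_{i,j}$ is then a telescoper for $f$. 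So the converse half of your proposal is a correct plan left incomplete, while the forward half contains a genuine error plus a missing key step, and as written the proof does not go through.
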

\begin{proof}
Suppose that there exists a nonzero~$L\in k(t)\langle S_t\rangle$ such that~$L(t, S_t)(f)=\Delta_x(g)$
for some~$g\in k(t, x)$. Write~$L=\sum_{\ell=0}^{\rho} e_{\ell}S_t^{\ell}$ with~$e_{\ell}\in k(t)$
and~$e_0\neq 0$.
For any~$\lambda \in \{1, \ldots, m\}$, we consider the rational function
\[f_{\lambda} = \sum_{i=1}^m \frac{\alpha_{i, n_{\lambda}}}{(x-\beta_i)^{n_{\lambda}}},\quad \text{where~$\alpha_{\lambda, n_{\lambda}}\neq 0$ by assumption} .\]
Without loss of generality, we may assume that the other~$\alpha_{i, n_{\lambda}}$ with~$i\neq \lambda$ are
also nonzero. Since the shift operators~$S_t, S_x$ preserve the multiplicity, we
have~$L(f_{\lambda})=\Delta_x(g_{\lambda})$ for some~$g_{\lambda}\in k(t, x)$. By Proposition~\ref{PROP:ratsum},
all the residues of~$L(f_{\lambda})$ are zero. We now use the notation and analysis of Example~\ref{EX:intlin}.
We see that the
set~$T=\{[\beta_1], \ldots, [\beta_m]\}$ satisfies the property that for any~$i \in \{1, \ldots, m\}$, there exists
a nonzero~$m_i$ such that~$S_t^{m_i}([\beta_i])\in T$. As in  Example~\ref{EX:intlin},
we conclude that~$\beta_i=\frac{p_i}{m} t +c_i$ with~$p_i, m\in \bZ$ and~$c_i\in k$.

The opposite implication follows from the fact that the linear
operator
\[L_{i, j}=\alpha_{i, j}(t)S_t^{m}-\alpha_{i,j}(t+m)\]
is a telescoper for the fraction~$f_{i, j} = \alpha_{i, j}/(x-(\frac{p_i}{m} t +c_i))^j$
with respect to~$\Delta_x$ since~$\dres(L_{i, j}(f_{i, j}), [\frac{p_i}{m} t +c_i], j)=0$.
Then the LCLM of the~$L_{i, j}$ is a telescoper for~$f$ with respect to~$\Delta_x$.
\end{proof}

\begin{theorem}\label{THM:qd}
Let~$f\in k(t, x)$ be as in~\eqref{EQ:dresform}. Then~$f$
has a telescoper~$L$ in~$k(t)\langle Q_t\rangle$ such that~$L(t, Q_t)(f)= \Delta_x(g)$
for some~$g\in k(t, x)$ if and only if all the~$\beta_i$ are in~$k$.
\end{theorem}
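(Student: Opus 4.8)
The plan is to follow the template of Theorem~\ref{THM:dd}, replacing the shift $S_t$ by the $q$-shift $Q_t$ while keeping the additive $\bZ$-orbit equivalence dictated by $\Delta_x$. The only case-specific input that changes is the first-order rigidity lemma: here it is Lemma~\ref{LM:cstqd} rather than Lemma~\ref{LM:intlin}, and this is exactly what forces $\beta_i \in k$ instead of a linear polynomial in $t$. For the forward direction I would start from a nonzero $L = \sum_{\ell=0}^{\rho} e_\ell Q_t^\ell$ with $e_\ell \in k(t)$ and $e_\rho \neq 0$ satisfying $L(f) = \Delta_x(g)$. First I would record that both $Q_t$ and $S_x$ (hence $\Delta_x$) preserve the multiplicity of a pole, so grouping the partial fraction \eqref{EQ:dresform} by multiplicity lets me assume $f = \sum_{i=1}^m \alpha_{i,j}/(x-\beta_i)^j$ for a single $j$, with each $\alpha_{i,j}\neq 0$ and the $\beta_i$ in distinct $\bZ$-orbits (this is the $f_\lambda$ reduction of Theorem~\ref{THM:dd}). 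Then
\[
L(f) = \sum_{\ell=0}^{\rho} \sum_{i=1}^{m} \frac{e_\ell\, Q_t^\ell(\alpha_{i,j})}{\bigl(x - Q_t^\ell(\beta_i)\bigr)^{j}},
\]
and by Proposition~\ref{PROP:ratsum} every discrete residue of $L(f)$ must vanish.

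The heart of the argument, and the step I expect to be the main obstacle, is extracting the hypothesis of Proposition~\ref{PROP:periodic} from this vanishing. I would examine the outermost shift: for each fixed $i$ the orbit $[Q_t^\rho(\beta_i)]$ receives the contribution $e_\rho Q_t^\rho(\alpha_{i,j}) \neq 0$ from the term $(\ell,i)=(\rho,i)$, and no other $(\rho,i')$ with $i'\neq i$ can land in this orbit, since the $\beta_{i'}$ lie in distinct $\bZ$-orbits and the automorphism $Q_t^{\rho}$ preserves orbit-distinctness. Hence, for the residue at $[Q_t^\rho(\beta_i)]$ to cancel, there must exist $\ell' < \rho$ and an index $i'$ with $[Q_t^{\ell'}(\beta_{i'})] = [Q_t^\rho(\beta_i)]$; applying $Q_t^{-\ell'}$ (which respects the additive $\bZ$-orbit structure) gives $Q_t^{\rho-\ell'}([\beta_i]) = [\beta_{i'}] \in T$, where $T = \{[\beta_1],\dots,[\beta_m]\}$ and $\rho-\ell' \geq 1$. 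Thus for every $i$ there is a nonzero $m_i$ with $Q_t^{m_i}([\beta_i]) \in T$.

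I would then invoke the specialization of Proposition~\ref{PROP:periodic} with $\sigma = Q_t$ and the $Q_t$-compatible relation $\alpha_1 \sim \alpha_2 \iff \alpha_1 - \alpha_2 \in \bZ$ (an instance analogous to Examples~\ref{EX:intlin} and~\ref{EX:intqlin}, whose compatibility follows because $Q_t$ fixes $\bZ$) to produce a single $m \geq 1$ with $Q_t^m(\beta_i) - \beta_i \in \bZ$ for all $i$. Finally Lemma~\ref{LM:cstqd} yields $\beta_i \in k$, completing this implication.

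For the converse, assume all $\beta_i \in k$. Then $Q_t(\beta_i) = \beta_i$, and, as in the remark following \eqref{EQ:cparfrac}, each $\alpha_{i,j}$ lies in $k(t)(\beta_1,\dots,\beta_m) = k(t)$. Mimicking the construction in Theorem~\ref{THM:dc}, I would take $L_{i,j} = \alpha_{i,j}(t)\,Q_t - \alpha_{i,j}(qt) \in k(t)\langle Q_t\rangle$, which annihilates $\alpha_{i,j} \in k(t)$; since $\beta_i$ is fixed by $Q_t$, we get $L_{i,j}\bigl(\alpha_{i,j}/(x-\beta_i)^{j}\bigr) = 0 = \Delta_x(0)$, so $L_{i,j}$ is a telescoper for that term. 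The LCLM of the $L_{i,j}$ is then a telescoper for $f$, using the fact that a sum of terms each admitting a telescoper again admits one.
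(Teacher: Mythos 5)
Your proof is correct and takes essentially the same route as the paper's: the same multiplicity-by-multiplicity $f_\lambda$ reduction, Proposition~\ref{PROP:ratsum} to kill residues, Proposition~\ref{PROP:periodic} applied to the additive $\bZ$-orbit relation with $Q_t$ as the bijection, Lemma~\ref{LM:cstqd} to conclude $\beta_i\in k$, and the same operators $L_{i,j}=\alpha_{i,j}(t)Q_t-\alpha_{i,j}(qt)$ combined by LCLM for the converse. The only (harmless) differences are that you normalize the leading coefficient $e_\rho\neq 0$ where the paper takes $e_0\neq 0$, and that you spell out explicitly the cancellation argument extracting the hypothesis of Proposition~\ref{PROP:periodic} from the vanishing residues, a step the paper leaves implicit.
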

\begin{proof}
Suppose that there exists a nonzero~$L\in k(t)\langle Q_t\rangle$ such that~$L(t, Q_t)(f)=\Delta_x(g)$
for some~$g\in k(t, x)$. Write~$L=\sum_{\ell=0}^{\rho} e_{\ell}Q_t^{\ell}$ with~$e_{\ell}\in k(t)$
and~$e_0\neq 0$.
For any~$\lambda \in \{1, \ldots, m\}$, we consider the rational function
\[f_{\lambda} = \sum_{i=1}^m \frac{\alpha_{i, n_{\lambda}}}{(x-\beta_i)^{n_{\lambda}}},\quad \text{where~$\alpha_{\lambda, n_{\lambda}}\neq 0$ by assumption} .\]
Without loss of generality, we may assume that the other~$\alpha_{i, n_{\lambda}}$ with~$i\neq \lambda$ are
also nonzero. Since the operators~$Q_t, S_x$ preserve the multiplicity, we
have~$L(f_{\lambda})=\Delta_x(g_{\lambda})$ for some~$g_{\lambda}\in k(t, x)$.
By Proposition~\ref{PROP:ratsum},
all the residues of~$L(f_{\lambda})$ are zero.
We shall again use the reasoning and notation in Example~\ref{EX:intlin} where~$[ \ \ ]$
is an equivalence class of the equivalence relation that~$\alpha_1\sim \alpha_2$
in~$\overline{k(t)}$ if~$\alpha_1-\alpha_2\in \bZ$.  In particular, the
set~$T=\{[\beta_1], \ldots, [\beta_m]\}$ satisfies the property that for any~$i \in \{1, \ldots, m\}$, there exists
a nonzero~$m_i$ such that~$Q_t^{m_i}([\beta_i])\in T$. Taking the shift mapping~$Q_t$ as  the bijection,
Proposition~\ref{PROP:periodic} and Lemma~\ref{LM:cstqd} imply
that~$\beta_i\in k$ for all~$i$ with~$1\leq i \leq m$.

The opposite implication follows from the fact that the linear
operator
\[L_{i, j}=\alpha_{i, j}(t)Q_t-\alpha_{i,j}(qt)\]
is a telescoper for the fraction~$f_{i, j} = \alpha_{i, j}/(x-\beta_i)^j$
with respect to~$\Delta_x$ since $\dres(L_{i, j}(f_{i, j}), [\beta_i], j)=0$.
Then the LCLM of the~$L_{i, j}$ is a telescoper for~$f$ with respect to~$\Delta_x$.
\end{proof}
\subsubsection{Telescopers with respect to~$\Delta_{q, x}$}\label{SUBSECT:existenceqshift}
Let~$f\in k(t, x)$ be of the form
\begin{equation}\label{EQ:qresform}
    f = c + \sum_{i=1}^m\sum_{j=1}^{n_i} \frac{\alpha_{i, j}}{(x-\beta_i)^j},
\end{equation}
where~$c\in k(t)$, $\alpha_{i,j}, \beta_i\in \overline{k(t)}$, $\alpha_{i, n_i}\neq 0$, and the~$\beta_i$ are in distinct~$q^\bZ$-orbits.
\begin{theorem}\label{THM:cq}
Let~$f\in k(t, x)$ be as in~\eqref{EQ:qresform}. Then~$f$
has a telescoper~$L$ in~$k(t)\langle D_t\rangle$ such that~$L(t, D_t)(f)= \Delta_{q, x}(g)$
for some~$g\in k(t, x)$ if and only if all the~$\beta_i$ are in~$k$.
\end{theorem}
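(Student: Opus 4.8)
The plan is to mirror the structure of Theorems~\ref{THM:cd}, \ref{THM:dd}, and \ref{THM:qd}, since the $q$-discrete setting with $\partial_x = \Delta_{q,x}$ and $\partial_t = D_t$ is the exact analogue of Theorem~\ref{THM:cd} (the $D_t$ case, but now with $q^\bZ$-orbits replacing $\bZ$-orbits). First I would handle the forward direction: assume a nonzero $L = \sum_{\ell=0}^\rho e_\ell D_t^\ell \in k(t)\langle D_t\rangle$ with $e_\rho \neq 0$ satisfies $L(t,D_t)(f) = \Delta_{q,x}(g)$, and extract information from the highest-multiplicity poles. The key computation is that applying $D_t^\ell$ to a term $\alpha_{i,n_i}/(x-\beta_i)^{n_i}$ produces a leading term $(n_i)_\ell \alpha_{i,n_i}(D_t(\beta_i))^\ell/(x-\beta_i)^{n_i+\ell}$ of multiplicity $n_i + \ell$, exactly as in the proof of Theorem~\ref{THM:cd}. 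Thus
\[
L(f) = \sum_{i=1}^m \frac{(n_i)_\rho \, \alpha_{i,n_i}\,(D_t(\beta_i))^\rho}{(x-\beta_i)^{n_i+\rho}} + \text{lower terms.}
\]

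Next I would invoke the $q$-summability criterion, Proposition~\ref{PROP:ratqsum}, in place of Proposition~\ref{PROP:ratsum}. Because the $\beta_i$ lie in distinct $q^\bZ$-orbits, the top-multiplicity terms cannot cancel against one another, so the vanishing of all $q$-discrete residues of $L(f)$ forces $(n_i)_\rho\,\alpha_{i,n_i}\,(D_t(\beta_i))^\rho = 0$ for each $i$. Since $\alpha_{i,n_i}\neq 0$ and $(n_i)_\rho > 0$, this yields $D_t(\beta_i) = 0$, whence $\beta_i \in k$ by Lemma~\ref{LM:const}~(iii). One subtlety specific to the $q$-case is the $\beta_i \neq 0$ hypothesis in Definition~\ref{DEF:qres} and Proposition~\ref{PROP:ratqsum}; I would note that a pole at $\beta_i = 0$ is already in $k$, so it poses no obstruction to the conclusion, and the constant $c$ and the $xp_1 + p_2/x^s$ parts of any residual form are harmless since they are $q$-summable by Lemma~\ref{LM:ratqsum}.

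For the converse, I would follow the opposite implication of Theorem~\ref{THM:cd}: assuming every $\beta_i \in k$, it suffices to produce a telescoper for each single fraction $\alpha_{i,j}/(x-\beta_i)^j$ with $\beta_i \in k$ and then take the LCLM, using the fact cited from~\cite[Lemma 1]{AbramovLe2002} that sums of functions with telescopers have telescopers. Since $\beta_i \in k$, the partial-fraction process gives $\alpha_{i,j} \in k(t)(\beta_i) = k(t)$, and the first-order operator $L_{i,j} = D_t - D_t(\alpha_{i,j})/\alpha_{i,j}$ annihilates $\alpha_{i,j}$, hence $L_{i,j}(\alpha_{i,j}/(x-\beta_i)^j) = 0 = \Delta_{q,x}(0)$. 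The constant term $c \in k(t)$ is likewise annihilated by some $D_t - D_t(c)/c$ (or is simply zero), so the LCLM of all these operators is a telescoper for $f$.

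The main obstacle, such as it is, lies entirely in the forward direction: one must be careful that the $q$-discrete residue (which weights $\alpha_{i,j,\ell}$ by $q^{-\ell j}$) and the possibility of a pole at the origin do not interfere with the leading-multiplicity argument. Since $D_t$ commutes with the $q$-shift $Q_x$ and preserves pole multiplicities exactly as in the ordinary shift case, and since distinct $q^\bZ$-orbits prevent top-order cancellation, the argument goes through verbatim with $\Delta_x$ replaced by $\Delta_{q,x}$ and Proposition~\ref{PROP:ratsum} replaced by Proposition~\ref{PROP:ratqsum}; I expect no genuinely new difficulty beyond bookkeeping.
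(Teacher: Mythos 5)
Your proposal is correct and follows exactly the route the paper takes: the paper's proof of Theorem~\ref{THM:cq} is literally ``proceeds in the same way as Theorem~\ref{THM:cd},'' and your argument is precisely that adaptation, with Proposition~\ref{PROP:ratqsum} replacing Proposition~\ref{PROP:ratsum}, $q^\bZ$-orbits replacing $\bZ$-orbits, and the leading-multiplicity computation plus Lemma~\ref{LM:const}~(iii) forcing $D_t(\beta_i)=0$. Your extra remarks on the constant $c$, the $q$-summable parts, and the (vacuous) pole at the origin are careful bookkeeping of details the paper leaves implicit, not a deviation in method.
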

\begin{proof}
The proof proceeds in the same way as that in Theorem~\ref{THM:cd}.
\end{proof}

\begin{theorem}\label{THM:dq}
Let~$f\in k(t, x)$ be as in~\eqref{EQ:qresform}. Then~$f$
has a telescoper~$L$ in~$k(t)\langle S_t\rangle$ such that~$L(t, S_t)(f)= \Delta_{q, x}(g)$
for some~$g\in k(t, x)$ if and only if all the~$\beta_i$ are in~$k$.
\end{theorem}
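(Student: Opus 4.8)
The plan is to follow the template of Theorems~\ref{THM:dd} and~\ref{THM:qd}. In the forward direction I will isolate the obstruction to $q$-summability one multiplicity and one $q^{\bZ}$-orbit at a time, deduce from Proposition~\ref{PROP:ratqsum} that the $q$-discrete residues of $L(f)$ vanish, convert this into a periodicity statement about the orbits $[\beta_i]_q$ under $S_t$ by means of Proposition~\ref{PROP:periodic}, and finally invoke the mixed shift/$q$-shift rigidity of Lemma~\ref{LM:cstdq}. The converse is a direct construction.

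For the converse, suppose every $\beta_i \in k$. Exactly as in Theorem~\ref{THM:dd}, the partial-fraction process forces $\alpha_{i,j} \in k(t)(\beta_i) = k(t)$, and likewise $c \in k(t)$. Any nonzero $h \in k(t)$ is annihilated by the first order operator $S_t - S_t(h)/h \in k(t)\langle S_t\rangle$, so I let $L_{i,j}$ annihilate $\alpha_{i,j}$ and $L_c$ annihilate $c$. Since each $\beta_i$ is a constant, $S_t$ passes through the pole and $L_{i,j}\!\left(\alpha_{i,j}/(x-\beta_i)^j\right) = L_{i,j}(\alpha_{i,j})/(x-\beta_i)^j = 0$, while $L_c(c)=0$; hence the LCLM $L$ of all the $L_{i,j}$ and $L_c$ satisfies $L(f)=0=\Delta_{q,x}(0)$.

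For the forward direction, let $L = \sum_{\ell=0}^{\rho} e_\ell S_t^{\ell} \in k(t)\langle S_t\rangle$ be a telescoper, normalized so that $e_\rho \neq 0$. Since $L(f)$ is $q$-summable, Proposition~\ref{PROP:ratqsum} gives that all $q$-discrete residues of $L(f)$ vanish. Because $S_t$ preserves the multiplicity of a pole, the multiplicity-$n_\lambda$ poles of $L(f)$ arise precisely from the multiplicity-$n_\lambda$ poles of $f$ shifted by the $S_t^{\ell}$. Fix $\lambda$ and work at the orbit $[S_t^{\rho}(\beta_\lambda)]_q$ and multiplicity $n_\lambda$. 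As the $\beta_i$ lie in distinct $q^{\bZ}$-orbits and $S_t$ preserves that distinctness, the pole $S_t^{\rho}(\beta_\lambda)$ is the only one among the $\ell=\rho$ terms lying in $[S_t^{\rho}(\beta_\lambda)]_q$, and it contributes $e_\rho S_t^{\rho}(\alpha_{\lambda,n_\lambda}) \neq 0$ (recall $\alpha_{\lambda,n_\lambda}\neq 0$ from \eqref{EQ:qresform}). As that residue must vanish, some pole $S_t^{\ell'}(\beta_i)$ with $\ell' < \rho$ also lies in $[S_t^{\rho}(\beta_\lambda)]_q$, i.e. $S_t^{\rho-\ell'}([\beta_\lambda]_q) = [\beta_i]_q$ belongs to $T := \{[\beta_1]_q, \ldots, [\beta_m]_q\}$. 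Hence for every $\lambda$ there is a positive $m_\lambda$ with $S_t^{m_\lambda}([\beta_\lambda]_q) \in T$, so Proposition~\ref{PROP:periodic} applies to the bijection $\sigma = S_t$ acting on the quotient by the relation $\alpha_1 \sim \alpha_2 \Leftrightarrow \alpha_1/\alpha_2 \in q^{\bZ}$ (which is $S_t$-compatible because $q \in k$), yielding a nonzero $m$ with $S_t^m([\beta_i]_q) = [\beta_i]_q$, i.e. $S_t^m(\beta_i) = q^{m_i}\beta_i$ for some $m_i \in \bZ$, for all $i$. Lemma~\ref{LM:cstdq} then forces $m_i = 0$ and $\beta_i \in k$.

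The step I expect to be the main obstacle is the orbit-collision bookkeeping that yields the premise of Proposition~\ref{PROP:periodic}. The crucial points are that $S_t$ preserves both the multiplicity of a pole and the distinctness of $q^{\bZ}$-orbits, so that the leading ($\ell = \rho$) contribution at the orbit $[S_t^{\rho}(\beta_\lambda)]_q$ is a single nonzero term that cannot cancel against another $\ell = \rho$ term; its cancellation must therefore come from a strictly smaller shift $\ell' < \rho$, and it is exactly this positive gap $\rho - \ell'$ that supplies the positive exponent needed in Proposition~\ref{PROP:periodic}. Once this is established the reduction to Lemma~\ref{LM:cstdq} is routine; note in particular that, unlike Theorem~\ref{THM:dd}, the $q$-orbit rigidity here pins each $\beta_i$ all the way down to $k$ rather than merely to a $\bQ$-linear function of $t$.
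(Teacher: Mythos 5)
Your proof is correct and follows essentially the same route as the paper's: vanishing of the $q$-discrete residues of the telescoped function (Proposition~\ref{PROP:ratqsum}) supplies the hypothesis of Proposition~\ref{PROP:periodic} for the bijection $S_t$ acting on $q^{\bZ}$-orbits, Lemma~\ref{LM:cstdq} then pins each $\beta_i$ to $k$, and the converse is the identical LCLM construction with first-order annihilators of the $\alpha_{i,j}$ and of $c$. The only deviation is cosmetic but welcome: by normalizing at the leading coefficient $e_\rho$ and examining the orbit $[S_t^{\rho}(\beta_\lambda)]_q$ you obtain the ``outgoing'' condition $S_t^{m_\lambda}([\beta_\lambda]_q)\in T$ exactly as Proposition~\ref{PROP:periodic} requires, whereas the paper's normalization $e_0\neq 0$ most naturally yields the dual ``incoming'' statement, so your rendering of the orbit-collision step is, if anything, slightly more explicit than the paper's.
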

\begin{proof}
Suppose that there exists a nonzero~$L\in k(t)\langle S_t\rangle$ such that~$L(t, S_t)(f)=\Delta_{q, x}(g)$
for some~$g\in k(t, x)$. Write~$L=\sum_{\ell=0}^{\rho} e_{\ell}S_t^{\ell}$ with~$e_{\ell}\in k(t)$
and~$e_0\neq 0$.
For any~$\lambda \in \{1, \ldots, m\}$, we consider the rational function
\[f_{\lambda} = \sum_{i=1}^m \frac{\alpha_{i, n_{\lambda}}}{(x-\beta_i)^{n_{\lambda}}},\quad \text{where~$\alpha_{\lambda, n_{\lambda}}\neq 0$ by assumption} .\]
Without loss of generality, we may assume that the other~$\alpha_{i, n_{\lambda}}$ with~$i\neq \lambda$ are
also nonzero. Since the operators~$S_t, Q_x$ preserve the multiplicity, we
have~$L(f_{\lambda})=\Delta_{q, x}(g_{\lambda})$ for some~$g_{\lambda}\in k(t, x)$.
By Proposition~\ref{PROP:ratqsum},
all the residues of~$L(f_{\lambda})$ are zero.
We now use the reasoning and notation in Example~\ref{EX:intqlin}. In particular, the
set~$T=\{[\beta_1]_q, \ldots, [\beta_m]_q\}$ satisfies that for any~$i \in \{1, \ldots, m\}$, there exists
a nonzero~$m_i$ such that~$S_t^{m_i}([\beta_i]_q)\in T$. Taking the shift mapping~$S_t$ as bijection,
Proposition~\ref{PROP:periodic} and Lemma~\ref{LM:cstdq} imply
that~$\beta_i\in k$ for all~$i$ with~$1\leq i \leq m$.

The opposite implication follows from the fact that~$c(t)$ is
annihilated by the operator~$L_0 =c(t)S_t - c(t+1)$ and
the linear operator
\[L_{i, j}=\alpha_{i, j}(t)S_t-\alpha_{i,j}(t+1)\]
is a telescoper for the fraction~$f_{i, j} = \alpha_{i, j}/(x-\beta_i)^j$
with respect to~$\Delta_{q, x}$ since $\dres(L_{i, j}(f_{i, j}), [\beta_i]_q, j)=0$.
Then the LCLM of the~$L_0$ and~$L_{i, j}$ is a telescoper for~$f$ with respect to~$\Delta_{q, x}$.
\end{proof}

The following theorem is a $q$-analogue of Theorem~\ref{THM:dd}, which
has also been shown in~\cite[Theorem 1]{Le2001}.
\begin{theorem}\label{THM:qq}
Let~$f\in k(t, x)$ be as in~\eqref{EQ:qresform}. Then~$f$
has a telescoper~$L$ in~$k(t)\langle Q_t\rangle$ such that~$L(t, Q_t)(f)= \Delta_{q, x}(g)$
for some~$g\in k(t, x)$ if and only if all the~$\beta_i=c_i t^{r_i}$ with~$r_i\in \bQ$ and~$c_i\in k$.
\end{theorem}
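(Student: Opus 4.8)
The plan is to follow the pattern of Theorems~\ref{THM:dd} and~\ref{THM:dq}, replacing the shift $S_t$ by the $q$-shift $Q_t$ and substituting the multiplicative analogues---Example~\ref{EX:intqlin} and Lemma~\ref{LM:qintlin}---for the additive ones used there. For necessity, I would suppose $L=\sum_{\ell=0}^{\rho}e_\ell Q_t^\ell$ with $e_\ell\in k(t)$ and nonzero leading coefficient $e_\rho$ satisfies $L(t,Q_t)(f)=\Delta_{q,x}(g)$. For a fixed $\lambda$ I isolate the terms of $x$-pole multiplicity $n_\lambda$,
\[
f_\lambda=\sum_{i=1}^m\frac{\alpha_{i,n_\lambda}}{(x-\beta_i)^{n_\lambda}},
\]
assuming without loss of generality that every $\alpha_{i,n_\lambda}\neq 0$. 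Since $Q_t$ and $Q_x$ both preserve $x$-pole multiplicity, comparing the multiplicity-$n_\lambda$ components of $L(f)=\Delta_{q,x}(g)$ gives $L(f_\lambda)=\Delta_{q,x}(g_\lambda)$, so Proposition~\ref{PROP:ratqsum} forces every $q$-discrete residue of $L(f_\lambda)$ to vanish.

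The crux is to convert this vanishing into the orbit-return hypothesis of Example~\ref{EX:intqlin}, taken with the relation $\alpha_1\sim\alpha_2\iff\alpha_1/\alpha_2\in q^{\bZ}$ on $\overline{k(t)}$, the bijection $\sigma=Q_t$, and $T=\{[\beta_1]_q,\ldots,[\beta_m]_q\}$. The poles of $L(f_\lambda)$ are the $Q_t^\ell(\beta_i)$ with $0\le\ell\le\rho$; because the $\beta_i$ lie in distinct $q^{\bZ}$-orbits and $\sigma$ permutes orbits bijectively, the orbit $Q_t^\rho([\beta_i]_q)$ contains exactly one top-level pole, $Q_t^\rho(\beta_i)$, whose coefficient $e_\rho Q_t^\rho(\alpha_{i,n_\lambda})$ is nonzero. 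Vanishing of the $q$-discrete residue there then forces a second contributing pole $Q_t^{\ell'}(\beta_{i'})$ with $\ell'<\rho$ in the same orbit, i.e.\ $Q_t^{\rho-\ell'}([\beta_i]_q)=[\beta_{i'}]_q\in T$. This is precisely $\sigma^{m_i}([\beta_i]_q)\in T$ with $m_i=\rho-\ell'>0$, so Proposition~\ref{PROP:periodic} yields a common $m$ with $Q_t^m(\beta_i)=q^{s_i}\beta_i$ for some $s_i\in\bZ$, and Lemma~\ref{LM:qintlin} gives $\beta_i=c_i t^{s_i/m}$ with $c_i\in k$, as desired. (The trailing coefficient $e_0\neq 0$ would serve equally well, after applying Proposition~\ref{PROP:periodic} to $\sigma^{-1}$.)

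For sufficiency I would build a telescoper term by term. Writing $\beta_i=c_it^{r_i}$ with $r_i=s_i/m_i$ in lowest terms, one has $Q_t^{m_i}(\beta_i)=q^{s_i}\beta_i$, and I claim that
\[
L_{i,j}=\alpha_{i,j}(t)\,Q_t^{m_i}-q^{-s_i j}\,\alpha_{i,j}(q^{m_i}t)
\]
is a telescoper for $f_{i,j}=\alpha_{i,j}/(x-\beta_i)^j$: applying it produces only the two poles $q^{s_i}\beta_i$ and $\beta_i$, both in the single orbit $[\beta_i]_q$, and a short computation shows that their weighted sum---the $q$-discrete residue of Lemma~\ref{LM:qres}---is zero, so $L_{i,j}(f_{i,j})$ is $\Delta_{q,x}$-summable. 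The constant $c\in k(t)$ is annihilated by $L_0=c(t)Q_t-c(qt)$. Taking the LCLM of $L_0$ and all the $L_{i,j}$ produces one operator $L$; since $f\in k(t,x)$ its poles and residues are permuted by $\mathrm{Gal}(\overline{k(t)}/k(t))$, this LCLM is Galois-stable and hence lies in $k(t)\langle Q_t\rangle$, and the certificate descends to $k(t,x)$ exactly as in Proposition~\ref{PROP:ratqsum}.

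The hard part will be the middle step---turning ``all $q$-discrete residues of $L(f_\lambda)$ vanish'' into the forward statement $\sigma^{m_i}([\beta_i]_q)\in T$. The delicate point is to exploit the leading coefficient together with the distinctness of the $q^{\bZ}$-orbits of the $\beta_i$, so that the top-level poles cannot cancel among themselves and a strictly lower-level pole must enter each top orbit; once this is in place, Proposition~\ref{PROP:periodic}, Lemma~\ref{LM:qintlin}, and the explicit telescopers above make the rest routine.
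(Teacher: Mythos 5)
Your proposal is correct and follows essentially the same route as the paper's proof: isolate the multiplicity-$n_\lambda$ component $f_\lambda$, use that $Q_t,Q_x$ preserve multiplicity together with Proposition~\ref{PROP:ratqsum} to get the orbit-return hypothesis, then apply Proposition~\ref{PROP:periodic} (via Example~\ref{EX:intqlin}) and Lemma~\ref{LM:qintlin}, and for sufficiency build the same explicit operators $\alpha_{i,j}(t)Q_t^{m}-q^{-js_i}\alpha_{i,j}(q^{m}t)$ plus $L_0$ and take their LCLM. The only differences are cosmetic or supplementary: you normalize the leading coefficient $e_\rho\neq 0$ where the paper uses $e_0\neq 0$, and you spell out the cancellation argument and the Galois-descent of the LCLM and certificate, details the paper leaves implicit.
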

\begin{proof}
Suppose that there exists a nonzero~$L\in k(t)\langle Q_t\rangle$ such that~$L(t, Q_t)(f)=\Delta_{q, x}(g)$
for some~$g\in k(t, x)$. Write~$L=\sum_{\ell=0}^{\rho} e_{\ell}Q_t^{\ell}$ with~$e_{\ell}\in k(t)$
and~$e_0\neq 0$.
For any~$\lambda \in \{1, \ldots, m\}$, we consider the rational function
\[f_{\lambda} = \sum_{i=1}^m \frac{\alpha_{i, n_{\lambda}}}{(x-\beta_i)^{n_{\lambda}}},\quad \text{where~$\alpha_{\lambda, n_{\lambda}}\neq 0$ by assumption} .\]
Without loss of generality, we may assume that the other~$\alpha_{i, n_{\lambda}}$ with~$i\neq \lambda$ are
also nonzero. Since the $q$-shift operators~$Q_t, Q_x$ preserve the multiplicity, we
have~$L(f_{\lambda})=\Delta_{q, x}(g_{\lambda})$ for some~$g_{\lambda}\in k(t, x)$.
By Proposition~\ref{PROP:ratqsum},
all the residues of~$L(f_{\lambda})$ are zero. In particular, the
set~$T=\{[\beta_1]_q, \ldots, [\beta_m]_q\}$ satisfies that for any~$i \in \{1, \ldots, m\}$, there exists
a nonzero~$m_i$ such that~$Q_t^{m_i}([\beta_i]_q)\in T$. By the analysis in Example~\ref{EX:intqlin},
we conclude that~$\beta_i=c_it^{{p_i}/{m}}$ with~$p_i, m\in \bZ$ and~$c_i\in k$.

The opposite implication follows from the fact that~$c(t)$ is annihilated by
the operator~$L_0 =cS_t - c(t+1)$ and
the linear operator
\[L_{i, j}=\alpha_{i, j}(t)Q_t^{m}-q^{-jp_i}\alpha_{i,j}(q^mt)\]
is a telescoper for the fraction~$f_{i, j} = \alpha_{i, j}/(x-(c_it^{{p_i}/{m}}))^j$
with respect to~$\Delta_{q, x}$ since~$\qres(L_{i, j}(f_{i, j}), [c_it^{{p_i}/{m}}]_q, j)=0$.
Then the LCLM of the~$L_0$ and~$L_{i, j}$ is a telescoper for~$f$ with respect to~$\Delta_{q, x}$.
\end{proof}

The necessary and sufficient conditions for the existence of telescopers enable
us to decide the termination of the Zeilberger algorithm for rational-function
inputs. After reducing the given rational function into a residual form, one
can detect the existence by investigating the denominator. For instance, we could
check whether the denominator factors into two univariate polynomials respectively in~$t$
and~$x$ in the case when~$\partial_t=D_t$ and~$\partial_x=\Delta_x$. Combining
the existence criteria with the Zeilberger algorithm yields a complete algorithm
for creative telescoping with rational-function inputs.

\subsection{Characterization of telescopers}\label{SUBSECT:chartele}
We have shown that telescopers exist for a special class
of rational functions.
Now, we will characterize the linear differential
and ($q$-)recurrence operators that could be telescopers
for rational functions. Using such a characterization, we will give a direct algebraic proof
of a theorem of Furstenberg  stating that the diagonal of a rational power series in two variables
is algebraic~\cite{Furstenberg1967}. In all of these considerations, residues are still the key.

For a rational function~$f\in k(t, x)$, all of the telescopers for~$f$
in~$k(t)\langle D_t\rangle$ form a left ideal in~$k(t)\langle D_t\rangle$, denoted by~$\mathcal{T}_f$.
Since the ring~$k(t)\langle D_t\rangle$ is a left Euclidean domain, the monic telescoper
of minimal order generates the left ideal~$\mathcal{T}_f$, and we
call this generator~\emph{the minimal telescoper} for~$f$.

\begin{theorem}\label{THM:telecc}
Let~$L(t, D_t)$ be a linear differential operator in~$k(t)\langle D_t\rangle$.
Then~$L$ is a telescoper for some~$f\in k(t, x)\setminus D_x(k(t, x))$
such that~$L(f)= D_x(g)$ with~$g\in k(t, x)$ if and only if $L(y(t))=0$ has
a nonzero solution algebraic over~$k(t)$. Moreover, if~$L$ is the
minimal telescoper for~$f$, then all solutions of~$L(y(t))=0$ are algebraic over~$k(t)$.
\end{theorem}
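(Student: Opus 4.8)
The plan is to analyze the relationship between a telescoper $L$ and the continuous residues of $f$, exploiting the commutativity $\cres_x(D_t(f),\beta)=D_t(\cres_x(f,\beta))$ established just before Theorem~\ref{THM:cc}. Writing $f$ in its residual form $f=\sum_{i=1}^m \alpha_i/(x-\beta_i)$ with distinct $\beta_i$, the key observation is that $L(f)=D_x(g)$ holds if and only if all residues of $L(f)$ vanish, by Proposition~\ref{PROP:ratint}. Since $L\in k(t)\langle D_t\rangle$ commutes with $D_t$ acting on coefficients and the $\beta_i\in\overline{k(t)}$ are constant with respect to the residue-tracking (as the poles of $f$ do not move under $D_t$), applying $L$ term by term gives $\cres_x(L(f),\beta_i)=L(\cres_x(f,\beta_i))=L(\alpha_i)$. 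Hence $L$ is a telescoper for such an $f$ precisely when $L(\alpha_i)=0$ for every $i$, i.e.\ each residue $\alpha_i$ is a solution of $L(y)=0$.

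From this dictionary the two directions follow. For the forward direction, first I would reduce $f$ to residual form without loss of generality, since a telescoper for $f$ is a telescoper for its residual form and $f\notin D_x(k(t,x))$ guarantees that at least one residue $\alpha_i$ is nonzero. Given that $L$ is a telescoper, $L(\alpha_i)=0$ for all $i$, so $L(y)=0$ has the nonzero algebraic solution $\alpha_i\in\overline{k(t)}$. For the converse, suppose $L(y)=0$ has a nonzero solution $\alpha$ algebraic over $k(t)$. I would then construct $f=\alpha/(x-\beta)$ for a suitable $\beta$ (for instance $\beta\in k$, or even $\beta=0$) so that $f$ is in residual form, $f\notin D_x(k(t,x))$ because $\alpha\neq 0$, and $\cres_x(L(f),\beta)=L(\alpha)=0$, making $L(f)=D_x(g)$ for some $g$; one must check $f\in k(t,x)$, which holds when $\alpha\in k(t)$, and more generally one takes an appropriate $K$-rational combination over a full Galois orbit of $\alpha$ to descend into $k(t,x)$.

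For the ``moreover'' claim, suppose $L$ is the minimal telescoper for $f$ and, for contradiction, that $L(y)=0$ has a solution space not entirely algebraic over $k(t)$. The residues $\alpha_i$ of $f$ are all annihilated by $L$ and are algebraic over $k(t)$, so they span a subspace $V$ of the solution space consisting of algebraic functions. I would consider the monic operator $\tilde L$ of minimal order annihilating all the $\alpha_i$; since the $\alpha_i$ are algebraic, each is annihilated by some operator by Proposition~\ref{PROP:aflde}, and $\tilde L$ (the LCLM of these, or the operator whose solution space is exactly $\mathrm{span}_k\{\text{orbit of the }\alpha_i\}$) has all solutions algebraic. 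Because $\tilde L(\alpha_i)=0$ for all $i$, the same residue computation shows $\tilde L$ is itself a telescoper for $f$, so $L\mid \tilde L$ on the left by minimality; comparing, $L$ and $\tilde L$ generate the same left ideal $\mathcal{T}_f$, forcing $L=\tilde L$ and hence all solutions of $L(y)=0$ algebraic.

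The main obstacle will be the converse construction and the minimality argument: ensuring the constructed $f$ genuinely lies in $k(t,x)$ rather than merely $\overline{k(t)}(x)$ requires the Galois-descent/symmetric-function argument used in Propositions~\ref{PROP:ratint} and~\ref{PROP:ratsum}, and the ``moreover'' part hinges on showing that the minimal telescoper cannot have order exceeding that of the minimal algebraic annihilator of the residues without violating minimality. The delicate point is verifying that $\tilde L$ is not just \emph{some} telescoper but lies in $\mathcal{T}_f$ with order at most that of $L$, which forces equality in the left-Euclidean domain $k(t)\langle D_t\rangle$; I expect this to require a careful accounting of how the solution space of the minimal telescoper is exactly spanned by the residues together with their conjugates.
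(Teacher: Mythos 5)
Your residue dictionary (for a residual form $f=\sum_i\alpha_i/(x-\beta_i)$, the operator $L$ is a telescoper exactly when $L(\alpha_i)=0$ for all $i$) and your forward direction are correct and agree with the paper's argument. The genuine gap is in the converse. From a nonzero algebraic solution $\alpha$ of $L(y)=0$ you must build $f\in k(t,x)\setminus D_x(k(t,x))$, and your candidate $f=\alpha/(x-\beta)$ with $\beta\in k$ lies in $k(t,x)$ only when $\alpha\in k(t)$. Your fallback, ``an appropriate $K$-rational combination over a full Galois orbit,'' cannot be made to work with poles fixed in $k$: if all poles $\beta_i$ are in $k$, then Galois descent forces every residue to lie in $k(t)$, and by your own dictionary each residue must be a solution of $L(y)=0$; so you would need a nonzero \emph{rational} solution of $L$, which need not exist. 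Concretely, for $L=D_t-\frac{1}{2t}$ (which annihilates $\alpha=\sqrt{t}$) the kernel is $k\cdot\sqrt{t}$, the trace of $\alpha$ vanishes, and no nonzero $f=\sum_i r_i(t)/(x-\beta_i)$ with $r_i\in k(t)$, $\beta_i\in k$ admits $L$ as a telescoper. The poles must move with the Galois action, and this is the specific idea you are missing, which the paper supplies: let $P\in k[t,x]$ be the minimal polynomial of $\alpha$ and set $f=xD_x(P)/P$. Then $f=\deg_xP+\sum_i\alpha_i/(x-\alpha_i)$, i.e.\ the poles are the conjugates of $\alpha$ and the residue at each pole equals that pole; each conjugate satisfies $L(\alpha_i)=\sigma_i(L(\alpha))=0$ because automorphisms over $k(t)$ commute with $D_t$ (the derivation analogue of Lemma~\ref{LM:commuting}), so $L$ is a telescoper for this manifestly rational $f$, whose residues are all nonzero. (For $\alpha=\sqrt{t}$ this produces $f=2+2t/(x^2-t)$.)

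The ``moreover'' part has the right shape but needs two repairs. First, defining $\tilde L$ as ``the LCLM'' of annihilators obtained from Proposition~\ref{PROP:aflde} does not give an operator with all solutions algebraic: that proposition supplies \emph{some} annihilator, and an LCLM of non-minimal annihilators can have transcendental solutions (e.g.\ $(D_t-1)(D_t-\frac{1}{2t})$ kills $\sqrt{t}$ but also $e^t$). The nontrivial input you elide is the existence of an operator whose solution space is \emph{spanned by} the residues and their conjugates; the paper gets exactly this by forming the Rothstein--Trager resultant $R(t,z)=\mathrm{resultant}_x(b,\,a-zD_x(b))$ of the residual form $a/b$ and invoking the construction of \cite[\S 2]{CormierSingerTragerUlmer2002}, which yields $L_R$ with solution space spanned by the roots of $R$. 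Second, you do not need to prove $L=\tilde L$: since $L_R$ is a telescoper and $L$ is the minimal one, $L$ right-divides $L_R$, so every solution of $L(y)=0$ is a solution of $L_R(y)=0$ and hence algebraic, which is all that is claimed.
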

\begin{proof}
Suppose that there exists~$f\in k(t, x)\setminus D_x(k(t, x))$ such that~$L(f)= D_x(g)$ for some~$g\in k(t, x)$.
Since~$f$ is not rational integrable with respect to~$x$, $f$ has a nonzero residue by Proposition~\ref{PROP:ratint}.
Since~$L$ is a telescoper for~$f$ with respect to~$D_x$,
$L$ vanishes at all residues of~$f$. So~$L(y(t))=0$ has a nonzero
algebraic solution in~$\overline{k(t)}$ because any residue of a rational function
in~$k(t, x)$ is algebraic over~$k(t)$.

Conversely, if~$\alpha\in \overline{k(t)}$ is a nonzero algebraic solution of~$L(y(t))=0$ with minimal
polynomial~$P\in k[t, x]$, then~$L$ is a telescoper for the rational function~$f=xD_x(P)/P$ with respect to~$D_x$.

Let~$a/b\in k(t, x)$ be the residual form of~$f$ with respect to~$D_x$. All of the residues of~$a/b$
are roots of the polynomial~$R(t, z) = \mbox{resultant}_x(b, a-zD_x(b))\in k(t)[z]$. By
the method in~\cite[\S 2]{CormierSingerTragerUlmer2002}, one can construct the minimal operator~$L_R$
in~$k(t)\langle D_t\rangle$ such that~$L_R(\alpha(t))=0$ for all roots of~$R$ in~$\overline{k(t)}$.
Moreover, the solutions space of~$L_R$ is spanned by the roots of~$R$. Since~$L_R$ vanishes
at all residues of~$f$, $L_R$ is a telescoper for~$f$. If~$L$ is the minimal telescoper for~$f$,
then~$L$ divides~$L_R$ on the right. Thus, all solutions of~$L(y(t))=0$ are solutions of~$L_R(y(t))=0$,
and therefore algebraic over~$k(t)$.
\end{proof}
The diagonal~$\operatorname{diag}(f)$ of a formal power series~$f=\sum_{i, j\geq 0} f_{i, j}t^ix^j\in k[[t, x]]$ is
defined by
\[\operatorname{diag}(f) = \sum_{i\geq 0} f_{i, i} t^i\in k[[t]].\]
Using the characterization of telescopers in Theorem~\ref{THM:telecc}, we now give a proof of a theorem of Furstenberg
that the diagonal of a rational power series in two variables is algebraic~\cite{Furstenberg1967}.
For other proofs, see the papers~\cite{Fliess1974, Gessel1980, Haiman1993}
and Stanley's book~\cite[Theorem 6.3.3]{Stanley1999}.

Let~$\mathcal {F}=k((x))$ be the quotient field of~$k[[x]]$ and~$\mathcal {F}[[t]]$ be
the formal power series over~$\mathcal {F}$. We use the notation~$[x^{-1}](a)$ to denote the
coefficient of~$x^{-1}$ in~$a\in \mathcal {F}$. For a formal power series~$g=\sum_{i\geq 0} a_i(x)t^i \in \mathcal {F}[[t]]$,
we define
\[[x^{-1}](g)=\sum_{i\geq 0} ([x^{-1}](a_i))t^i\in k[[t]],\]
and two derivations
\[D_t(g) = \sum_{i\geq 0} i a_i(x)  t^{i-1}, \quad D_x(g) = \sum_{i\geq 0} D_x(a_i)t^{i}.\]
The ring~$\mathcal {F}[[t]]$ then becomes a $k[t, x]\langle D_t, D_x\rangle$-module.
By definition, we have
\[[x^{-1}](D_t(g)) = D_t([x^{-1}](g)) \quad  \text{and} \quad [x^{-1}](t^i(g)) = t^i([x^{-1}](g))
\]
for all~$i\in \bN$. By induction, we have~$L([x^{-1}](g))=[x^{-1}](L(g))$ for all~$L\in k[t]\langle D_t\rangle$.
Since~$[x^{-1}](D_x(a))=0$ for any~$a\in \mathcal {F}$, we get~$[x^{-1}](D_x(g))=0$ for any~$g\in \mathcal {F}[[t]]$.
Let~$f=\sum_{i, j\geq 0} f_{i, j}t^ix^j$ be a formal power series
in~$k[[t, x]]$. Then~$F=f(x, t/x)/x$ is in~$\mathcal {F}[[t]]$. Applying~$[x^{-1}]$ to~$F$ yields
\[[x^{-1}](F) = [x^{-1}](\sum_{i, j\geq 0} f_{i, j} x^{i-j-1}t^j) = \sum_{j\geq 0} f_{j, j} t^j
= \operatorname{diag}(f).\]
If~$L\in k[t]\langle D_t\rangle$ be
such that~$L(F)=D_x(G)$ for some~$G\in \mathcal {F}[[t]]$, then applying~$[x^{-1}]$ to both sides
of~$L(F)=D_x(G)$ yields~$L(\operatorname{diag}(f))=0$. In summary, we have the following lemma.

\begin{lemma}\label{LM:diag}
Let~$f\in k[[t, x]]$ and~$F=f(x, t/x)/x\in \mathcal {F}[[t]]$. If~$L\in k[t]\langle D_t\rangle$ is
a telescoper for~$F$
such that~$L(F)=D_x(G)$ with~$G\in \mathcal {F}[[t]]$, then~$L(\operatorname{diag}(f))=0$.
\end{lemma}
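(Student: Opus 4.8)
The plan is to apply the $k$-linear ``coefficient of $x^{-1}$'' functional $[x^{-1}]$ to both sides of the telescoping relation $L(F) = D_x(G)$ and read off the assertion. Everything rests on the three properties of $[x^{-1}]$ recorded just above the lemma: that it commutes with $D_t$ and with multiplication by $t$, that it annihilates every $D_x$-image, and that it sends $F$ to $\operatorname{diag}(f)$. Since these are already established in the surrounding discussion, the proof itself is a short formal manipulation; the substance lies in confirming the setup is legitimate.

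First I would make precise that $F = f(x,t/x)/x$ really is an element of $\mathcal{F}[[t]]$, so that $[x^{-1}]$ may be applied to it coefficientwise in $t$. Writing $f = \sum_{i,j\geq 0} f_{i,j} t^i x^j$, the coefficient of $t^j$ in $F$ is $\sum_{i \geq 0} f_{i,j}\, x^{i-j-1}$, whose exponents are bounded below by $-j-1$; hence each $t$-coefficient is a genuine Laurent series in $\mathcal{F} = k((x))$ and $F \in \mathcal{F}[[t]]$. Extracting the coefficient of $x^{-1}$ then forces $i - j - 1 = -1$, i.e.\ $i = j$, so that $[x^{-1}](F) = \sum_{j \geq 0} f_{j,j}\, t^j = \operatorname{diag}(f)$.

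Next I would assemble the commutation properties into the single statement $L([x^{-1}](g)) = [x^{-1}](L(g))$ for every $L \in k[t]\langle D_t\rangle$ and every $g \in \mathcal{F}[[t]]$. Since any such $L$ is a $k$-linear combination of operators of the form $t^i D_t^{\ell}$, this follows by induction from the two identities $[x^{-1}](D_t(g)) = D_t([x^{-1}](g))$ and $[x^{-1}](t^i g) = t^i [x^{-1}](g)$. In particular, applying $[x^{-1}]$ to $L(F)$ gives $[x^{-1}](L(F)) = L([x^{-1}](F)) = L(\operatorname{diag}(f))$. On the other side, because $[x^{-1}](D_x(a)) = 0$ for every $a \in \mathcal{F}$, we get $[x^{-1}](D_x(G)) = 0$. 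Combining these with the hypothesis $L(F) = D_x(G)$ yields $L(\operatorname{diag}(f)) = [x^{-1}](L(F)) = [x^{-1}](D_x(G)) = 0$, which is the claim.

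The step I would treat as the main obstacle is the bookkeeping of the second paragraph: one must verify that forming $F = f(x,t/x)/x$ genuinely lands in $\mathcal{F}[[t]]$ (so that the functional $[x^{-1}]$ is even defined on $F$) and that the exponent-matching $i = j$ correctly isolates the diagonal. Once the module structure of $\mathcal{F}[[t]]$ and the well-definedness of $[x^{-1}]$ are in place, the commutation with $L$ and the vanishing on $D_x$-images are purely formal, and the conclusion is immediate.
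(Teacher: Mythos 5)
Your proposal is correct and follows essentially the same route as the paper: the paper establishes exactly the same three facts (the commutation $L([x^{-1}](g))=[x^{-1}](L(g))$ for $L\in k[t]\langle D_t\rangle$, the vanishing $[x^{-1}](D_x(g))=0$, and the identity $[x^{-1}](F)=\operatorname{diag}(f)$ via the exponent computation $i-j-1=-1$) in the discussion preceding the lemma, and then obtains the conclusion by applying $[x^{-1}]$ to both sides of $L(F)=D_x(G)$. Your additional check that $F$ genuinely lies in $\mathcal{F}[[t]]$ is a nice bit of care that the paper leaves implicit.
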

%

In the following, we prove Furstenberg's diagonal theorem.
\begin{theorem}[Furstenberg, 1967]\label{THM:diag}
Let~$f\in k[[t, x]]\cap k(t, x)$.
Then the diagonal of~$f$ is a power series algebraic over~$k(t)$.
\end{theorem}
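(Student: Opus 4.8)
The plan is to transport the rational-function telescoping identity furnished by Theorem~\ref{THM:cc} into the ring $\mathcal{F}[[t]]$ and then combine Lemma~\ref{LM:diag} with the telescoper characterization of Theorem~\ref{THM:telecc}. First I would set $F=f(x,t/x)/x$. As already computed just before Lemma~\ref{LM:diag}, the substitution $t\mapsto x,\ x\mapsto t/x$ followed by division by $x$ sends $f=\sum_{i,j\ge 0}f_{i,j}t^i x^j$ to $F=\sum_{i,j\ge 0}f_{i,j}x^{i-j-1}t^j\in\mathcal{F}[[t]]$, whose coefficient of each $t^j$ lies in $k((x))=\mathcal{F}$ and for which $[x^{-1}](F)=\operatorname{diag}(f)$. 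The decisive feature is that $F$ is \emph{simultaneously} a rational function in $k(t,x)$, so the existence result Theorem~\ref{THM:cc} applies to it.

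If $F$ already lies in $D_x(k(t,x))$, say $F=D_x(g)$, then expanding $g$ as a Laurent series in $t$ over $\mathcal{F}$ and comparing with $F\in\mathcal{F}[[t]]$ forces the coefficients of the negative powers of $t$ in $g$ to be $D_x$-constant, hence in $k$; discarding that part gives $F=D_x(G)$ with $G\in\mathcal{F}[[t]]$, whence $\operatorname{diag}(f)=[x^{-1}](F)=[x^{-1}](D_x(G))=0$, which is algebraic. Otherwise $F\in k(t,x)\setminus D_x(k(t,x))$, and I would take $L\in k(t)\langle D_t\rangle$ to be the \emph{minimal} telescoper for $F$, with $L(F)=D_x(g)$ for some $g\in k(t,x)$.

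To invoke Lemma~\ref{LM:diag} I must supply an operator with polynomial (not merely rational) coefficients in $t$ and a certificate living in $\mathcal{F}[[t]]$. Clearing denominators by left-multiplication with a suitable $p(t)\in k[t]$ gives $\tilde{L}=p(t)L\in k[t]\langle D_t\rangle$ and $\tilde{L}(F)=D_x(p(t)g)$. Since $\tilde{L}$ has polynomial coefficients and $\mathcal{F}[[t]]$ is a $k[t]\langle D_t,D_x\rangle$-module, $\tilde{L}(F)\in\mathcal{F}[[t]]$; exactly as in the degenerate case, the negative-$t$-power part of $p(t)g$ is killed by $D_x$ and may be subtracted off to produce $G\in\mathcal{F}[[t]]$ with $\tilde{L}(F)=D_x(G)$. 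Lemma~\ref{LM:diag} then yields $\tilde{L}(\operatorname{diag}(f))=0$, and since left-multiplication by the nonzero $p(t)$ does not change the solution space, $L(\operatorname{diag}(f))=0$ as well.

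The final step upgrades annihilation by $L$ into algebraicity, and I expect this to be the conceptual crux. An \emph{arbitrary} telescoper from Theorem~\ref{THM:cc} would only show that $\operatorname{diag}(f)$ satisfies a linear differential equation with polynomial coefficients, i.e.\ is $D$-finite, which is strictly weaker than algebraic. The whole force of the argument rests on having chosen $L$ to be the minimal telescoper of $F\in k(t,x)\setminus D_x(k(t,x))$, so that the \emph{moreover} clause of Theorem~\ref{THM:telecc} guarantees that \emph{every} solution of $L(y(t))=0$ is algebraic over $k(t)$; in particular $\operatorname{diag}(f)$ is. The only other point demanding care is the bookkeeping in passing between the rational-function identity and its expansion in $\mathcal{F}[[t]]$ --- clearing the denominators in $t$ and disposing of the negative powers of $t$ --- which the preceding paragraphs handle.
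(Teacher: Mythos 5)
Your proof is correct and follows essentially the same route as the paper: the substitution $F=f(x,t/x)/x$, the minimal telescoper of $F$ (with coefficients cleared to lie in $k[t]$), Lemma~\ref{LM:diag}, and the \emph{moreover} clause of Theorem~\ref{THM:telecc}. You are in fact more careful than the paper on two points it leaves implicit --- replacing the rational certificate $g$ by one in $\mathcal{F}[[t]]$ (the coefficients of negative powers of $t$ are $D_x$-constants, hence can be discarded) and treating the degenerate case $F\in D_x(k(t,x))$ separately, where the \emph{moreover} clause of Theorem~\ref{THM:telecc} does not literally apply.
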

\begin{proof}
Let~$F=f(x, t/x)/x$. Since~$f$ is a rational function in~$k(t, x)$, so is~$F$.
Let~$L\in k(t)\langle D_t \rangle$ be the minimal telescoper for~$F$. Since
multiplying by an element of~$k[t]$ commutes with the derivation~$D_x$, we can always
assume that the coefficients of~$L$ are polynomials in~$k[t]$.
By Theorem~\ref{THM:telecc},
all of the solutions of~$L(y(t))=0$ are algebraic over~$k(t)$. So the diagonal of~$f$
is algebraic over~$k(t)$ since~$L(\operatorname{diag}(f))=0$ by Lemma~\ref{LM:diag}.
\end{proof}

The following example is borrowed from the recent paper by Ekhad and Zeilberger~\cite{EZ2011}, from which
one can see how Zeilberger's method of creative telescoping
plays a role in solving concrete problems in combinatorics.

\begin{example}
Let~$s(n)$ be the number of binary words of length~$n$ for which the number of occurrences of~$00$
is the same as that of~$01$ as subwords. Stanley~\cite{Stanley2011} asked for a proof of
the following formula
\begin{equation}\label{EQ:sf}
S(t) \triangleq \sum_{n=0}^{\infty} s(n) t^n = \frac{1}{2}\left(\frac{1}{1-t} + \frac{1+2t}{\sqrt{(1-t)(1-2t)(1+t+2t^2)}}
\right).
\end{equation}
We first show that the generating function~$S(t)$ is an algebraic function over~$k(t)$.
The key ingredient is the Goulden-Jackson cluster method~\cite{GJ1979}. Noonan and
Zeilberger~\cite{NoonanZeilberger1999} gave an elegant survey of this method
together with an efficient implementation. Let~$\mathcal {W}$ be the set of all binary words
and let~$\tau_{00}(w), \tau_{01}(w)$ be the numbers of occurrences
of~$00$ and~$01$ in~$w\in \mathcal {W}$, respectively. Ekhad and Zeilberger~\cite{EZ2011} define the
generating function
\[f(t, y, z) = \sum_{w\in \mathcal {W}} t^{\text{length(w)}} y^{\tau_{00}(w)}z^{\tau_{01}(w)}.\]
Loading the package~{\sf DAVID{\_}IAN} created by Noonan and Zeilberger to {\sf Maple}, typing
{\sf GJstDetail([0, 1], \{[0, 0], [0, 1]\}, t, s)}, and replacing~$s[0, 0], s[0, 1]$
by~$y, z$, respectively, we get an explicit form of~$f(t, y, z)$,
\[f(t, y, z) = \frac{(1-y)t +1}{(y-z)t^2-(1+y)t+1},\]
which is a rational function of three variables. By definition, the desired generating function~$S(t)$
is the coefficient of~$x^{-1}$ in~$F(t, x) := x^{-1}f(t, x, x^{-1})$.
Since~$\tau_{00}(w)$ and~$\tau_{01}(w)$ are bounded by~${\text{length(w)}}$, the function~$F(t, x)$
is an element in the ring~$k((x))[[t]]$. Therefore, the coefficient~$[x^{-1}](F)$ is annihilated
by any telescoper for~$F$ in~$k[t]\langle D_t\rangle$. By Theorem~\ref{THM:telecc}, the function~$S(t)$
must be an algebraic function over~$k(t)$. By typing~{DETools[Zeilberger](F, t, x, Dt)} in {\sf Maple},
we get the minimal telescoper~$L$ for~$F$, which is
\begin{align*}
L =& \left( -1+5\,t-13\,{t}^{2}-30\,{t}^{4}+23\,{t}^{3}+40\,{t}^{5}-40\,{t}^{6}+16\,{t}^{7}
\right) {{\it Dt}}^{2} \\
&\, \, + \left( 80\,{t}^{6}-168\,{t}^
{5}+152\,{t}^{4}-88\,{t}^{3}+24\,{t}^{2}-2\,t+2 \right) {\it Dt}\\
&\, \, +48\,{
t}^{5}-72\,{t}^{4}+48\,{t}^{3}-12\,{t}^{2}-6\,t.
\end{align*}
To show Stanley's formula~\eqref{EQ:sf}, it suffices to verify that~$S(t)$ satisfies the equation~$L(y(t)) = 0$,
and check the two initial condition:~$y(0) =1$ and~$D_t(y)(0) = 2$.
Moreover, we could also rediscover Stanley's formula by solving the
differential equation. Thanks to Zeilberger's method, many classical combinatorial identities now can be
proved and rediscovered automatically all by computer.
\end{example}

Except the case when~$\partial_t=D_t$ and~$\partial_x=D_x$ as above, we will show that
telescopers  for non-integrable or non-summable rational functions in~$k(t, x)$
have at least one nonzero rational solution in~$k(t)$. Of these 8 cases,
6 follow easily from an examination of some of the proofs above.
These cases are considered in  Theorem~\ref{THM:telemixed}.
The remaining two cases require a slightly more detailed
proof and are considered in Theorem~\ref{THM:teleddqq}.

\begin{theorem}\label{THM:telemixed}
Let~$L\in k(t)\langle \partial_t\rangle$ and~$f\in k(t, x)$ satisfy one of the following conditions:
\begin{enumerate}
  \item $\partial_t = D_t$ and~$f\notin \Delta_x(k(t, x))$;
  \item $\partial_t = D_t$ and~$f\notin \Delta_{q, x}(k(t, x))$;
  \item $\partial_t = S_t$ and~$f\notin D_x(k(t, x))$;
  \item $\partial_t = S_t$ and~$f\notin \Delta_{q, x}(k(t, x))$;
  \item $\partial_t = Q_t$ and~$f\notin D_x(k(t, x))$;
  \item $\partial_t = Q_t$ and~$f\notin \Delta_x(k(t, x))$.
\end{enumerate}
Then~$L(t, \partial_t)$ is a telescoper for some~$f\in k(t, x)$ if and only if~$L(y(t))=0$ has
a nonzero rational solution in~$k(t)$.
\end{theorem}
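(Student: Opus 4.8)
The plan is to treat all six cases uniformly by leveraging the existence theorems already established, and to observe that each of the six pairs $(\partial_t,\partial_x)$ is exactly one for which the corresponding existence result concludes that every pole lies in $k$. First I would reduce to the situation where $f$ is in its residual form with respect to $\partial_x$; this is legitimate because $L$ is a telescoper for $f$ if and only if it is one for its residual form, and $f\notin\partial_x(k(t,x))$ precisely when this residual form is nonzero. The key structural fact is that each operator $\partial_t\in\{D_t,S_t,Q_t\}$ fixes the constants $k$ (that is, $D_t(\beta)=0$ and $S_t(\beta)=Q_t(\beta)=\beta$ for $\beta\in k$), so once the poles of $f$ are known to lie in $k$ they are preserved by $\partial_t$ and hence by $L$.

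For the forward direction, suppose $L$ is a telescoper for some $f\notin\partial_x(k(t,x))$. The six hypotheses correspond respectively to Theorems~\ref{THM:cd}, \ref{THM:cq}, \ref{THM:dc}, \ref{THM:dq}, \ref{THM:qc}, and~\ref{THM:qd}, each of which forces every pole $\beta_i$ of $f$ to lie in $k$. Because $\partial_t$ fixes each $\beta_i$ and preserves multiplicities, I would check that $L(f)$ is again in residual form with the same poles, its residue at $[\beta_i]$ of multiplicity $j$ being exactly $L(\alpha_{i,j})$ (and, in the $\Delta_{q,x}$ cases, its residue at infinity being $L(c)$). The telescoper condition $L(f)=\partial_x(g)$ then forces, through the appropriate vanishing criterion (Proposition~\ref{PROP:ratint}, \ref{PROP:ratsum}, or~\ref{PROP:ratqsum}), that $L(\alpha_{i,j})=0$ for all $i,j$ (and $L(c)=0$). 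Since $f$ has a nonzero residue, at least one $\alpha_{i,j}$ (or $c$) is nonzero; and because $\beta_i\in k$, the partial-fraction coefficients lie in $k(t)(\beta_1,\ldots,\beta_m)=k(t)$, exactly as in the proof of Theorem~\ref{THM:dc}. That nonzero coefficient is therefore a nonzero rational solution of $L(y(t))=0$.

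For the converse I would simply exhibit $f$. Given a nonzero $r\in k(t)$ with $L(r)=0$, fix any $\beta\in k$, choosing $\beta\neq 0$ in the $\Delta_{q,x}$ cases so that the $q$-residual form is legitimate, and set $f=r/(x-\beta)$. Its residue at $\beta$ (continuous, discrete, or $q$-discrete, according to $\partial_x$) equals $r\neq 0$, so $f\notin\partial_x(k(t,x))$ by the corresponding proposition. Since $\partial_t$ fixes $\beta$, one has $\partial_t^\ell(f)=\partial_t^\ell(r)/(x-\beta)$ for every $\ell$, whence $L(f)=L(r)/(x-\beta)=0=\partial_x(0)$; thus $L$ is a telescoper for $f$.

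The routine but essential step, and the only place where the individual cases must actually be inspected, is verifying that once $\beta_i\in k$ the residue of $L(f)$ is genuinely $L$ applied termwise to the residues of $f$; this is where one uses both the commutation of $L$ with $\partial_x$ and the invariance of the pole set under $\partial_t$, and where the small bookkeeping differences among the three flavours of residue appear (the extra constant $c$ and residue at infinity for $\Delta_{q,x}$, the $\bZ$- versus $q^\bZ$-orbit structure for $\Delta_x$). I expect no real conceptual obstacle beyond correctly matching each of the six cases to its existence theorem: the substance has been front-loaded into those theorems and into Lemma~\ref{LM:const}, so here it only remains to extract that a telescoper must annihilate a residue which is now guaranteed to be rational.
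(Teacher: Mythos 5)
Your proof is correct and follows essentially the same route as the paper: reduce to the residual form, invoke Theorems~\ref{THM:cd}, \ref{THM:cq}, \ref{THM:dc}, \ref{THM:dq}, \ref{THM:qc} and~\ref{THM:qd} to force all poles $\beta_i$ into $k$, observe that $L$ then acts coefficientwise on the partial fractions so that the vanishing-residue criteria (Propositions~\ref{PROP:ratint}, \ref{PROP:ratsum}, \ref{PROP:ratqsum}) give $L(\alpha_{i,j})=0$ with some $\alpha_{i,j}\in k(t)$ nonzero, and for the converse exhibit an explicit $f$ annihilated by $L$.

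One point where you in fact improve on the paper's own argument: for the converse the paper takes $f=r(t)/x$ in all six cases, but in the two $\Delta_{q,x}$ cases this $f$ is rational $q$-summable (Lemma~\ref{LM:ratqsum}, part 3; directly, $r/x=\Delta_{q,x}\bigl(qr/((1-q)x)\bigr)$), so it fails the hypothesis $f\notin\Delta_{q,x}(k(t,x))$. Your choice $f=r/(x-\beta)$ with $\beta\in k$ taken nonzero in those cases has nonzero $q$-discrete residue $r$ at $[\beta]_q$, hence is not $q$-summable by Proposition~\ref{PROP:ratqsum}, and is still killed by $L$ since $\partial_t$ fixes $\beta$; this repairs the defect while leaving the other four cases unchanged. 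You also explicitly track the residue at infinity (forcing $L(c)=0$) in the $q$-cases, which the paper's displayed residual form quietly drops; both are harmless but your bookkeeping is the more careful one.
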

\begin{proof}
Suppose that~$L(y(t))=0$ has a nonzero rational solution~$r(t)$ in~$k(t)$. Then~$L$ is a telescoper
for~$f=r(t)/x$ and~$f$ satisfies the assumption above. For the opposite implication, Theorems~\ref{THM:cd},
\ref{THM:cq}, \ref{THM:dc}, \ref{THM:dq}, \ref{THM:qc} and~\ref{THM:qd} imply that the residual form of~$f$
is of the form~$a/b$ such that~$b=b_1(t)b_2(x)$ with~$b_1\in k[t]$ and~$b_2\in k[x]$. Then
\[\frac{a}{b} = \sum_{i=1}^m \sum_{j=1}^{n_i} \frac{\alpha_{i, j}}{(x-\beta_i)^j}, \]
where~$\alpha_{i,j}\in k(t)$ and~$\beta_i\in k$ are in distinct ($q$-)orbits.
If~$L$ is a telescoper for~$f$, then~$L$ is also a telescoper for~$a/b$.
Since all the~$\beta_i$ are free of~$t$, we have
\[L(a/b) = \sum_{i=1}^m \sum_{j=1}^{n_i} \frac{L(\alpha_{i, j})}{(x-\beta_i)^j}=\partial_x(g), \quad
\text{where~$\partial_x\in \{D_x, \Delta_x, \Delta_{q, x}\}$}.\]
By Propositions~\ref{PROP:ratint}, \ref{PROP:ratsum}, and~\ref{PROP:ratqsum}, we have~$L(\alpha_{i,j})=0$.
Since~$a/b$ is not zero, at least one of the~$\alpha_{i, j}$ is nonzero. Thus~$L(y(t))=0$
has at least one nonzero rational solution in~$k(t)$.
\end{proof}

\begin{theorem}\label{THM:teleddqq}
Let~$L\in k(t)\langle \partial_t\rangle$ and~$f\in k(t, x)$ satisfy one of the following conditions:
$(1)$~$\partial_t = S_t$ and~$f\notin \Delta_{x}(k(t, x))$; $(2)$~$\partial_t = Q_t$ and~$f\notin \Delta_{q, x}(k(t, x))$.
Then~$L(t, \partial_t)$ is a telescoper for some~$f\in k(t, x)$ if and only if~$L(y(t))=0$ has
a nonzero rational solution in~$k(t)$.
\end{theorem}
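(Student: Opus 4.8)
The plan is to imitate the structure of the proof of Theorem~\ref{THM:telemixed}, while confronting the one new phenomenon that distinguishes these two cases from the six treated there: by Theorems~\ref{THM:dd} and~\ref{THM:qq} the poles~$\beta_i$ need not lie in~$k$, so~$\partial_t$ genuinely \emph{moves} the poles and thereby couples residues attached to different~$\bZ$-orbits (resp.\ $q^\bZ$-orbits). The backward direction is immediate and identical to Theorem~\ref{THM:telemixed}: if~$r(t)\in k(t)$ is a nonzero solution of~$L(y)=0$, then~$L$ is a telescoper for~$f=r(t)/x$ in case~$(1)$ and for~$f=r(t)/(x-1)$ in case~$(2)$, the shifted pole in the latter being forced by the nonvanishing requirement in~\eqref{EQ:qresform}; in both cases~$f$ has a nonzero residue and hence is not~$\partial_x$-summable.

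For the forward direction I would first put everything in normal form. As in Section~\ref{SUBSECT:existence} I may assume~$f$ equals its residual form, and after factoring a power of the invertible operator~$\partial_t$ out of~$L$ I may assume~$L=\sum_{\ell=0}^{\rho}e_\ell\partial_t^{\ell}$ with~$e_0\neq 0$; neither operation disturbs the telescoper property or the solution space, since~$S_t,Q_t$ commute with~$\partial_x$. Theorem~\ref{THM:dd} for case~$(1)$, or Theorem~\ref{THM:qq} for case~$(2)$, then forces every~$\beta_i$ into the shape~$r_it+c_i$ (resp.\ $c_it^{r_i}$) with~$r_i\in\bQ$, so each~$\beta_i$ lies in~$k(t)$ (resp.\ in some~$k(t^{1/n})$) and each residue~$\alpha_{i,j}$ is rational over the same field. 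Because~$\partial_t$ and~$\partial_x$ preserve the multiplicity index~$j$ and carry~$\bZ$-orbits to~$\bZ$-orbits, the requirement that~$L(f)$ be~$\partial_x$-summable decouples both over~$j$ and across the~$\partial_t$-cycles into which the finitely many orbits~$[\beta_i]$ fall, orbits in different cycles never colliding under~$\partial_t$. Hence I may reduce to an~$f$ supported on a single multiplicity~$j$ and a single~$\partial_t$-cycle~$[\beta],\partial_t[\beta],\dots,\partial_t^{m-1}[\beta]$, where~$m$ is the denominator in lowest terms of the relevant~$r_i$, the period produced by Examples~\ref{EX:intlin} and~\ref{EX:intqlin}.

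The core of the argument is then a residue computation on this single cycle. Writing~$f=\sum_{s=0}^{m-1}\alpha_s/(x-\gamma_s)^{j}$ with~$\gamma_s=\partial_t^{s}(\beta)$, applying~$L$, and collecting the terms whose poles land in each of the~$m$ orbits of the cycle, the vanishing of every discrete (resp.\ $q$-discrete) residue of~$L(f)$ becomes a \emph{cyclic} linear system in~$\alpha_0,\dots,\alpha_{m-1}$ whose coefficients are the residue-class slices~$\tilde L_{u}=\sum_{\ell\equiv u\,(m)}e_\ell\partial_t^{\ell}$ of~$L$ (in case~$(2)$ each slice carries the weight~$q^{-\ell j}$ dictated by Definition~\ref{DEF:qres}). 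Summing this system over all~$m$ orbit classes telescopes the slices back into~$L$ and gives~$L\!\left(\sum_{s}\alpha_s\right)=0$; since~$\sum_s\alpha_s\in k(t)$, this already delivers the required nonzero rational solution \emph{as long as that sum is nonzero}.

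I expect the genuine obstacle to be exactly the degenerate case~$\sum_s\alpha_s=0$, where the residues cancel and the crude sum is vacuous. To break it I would diagonalize the cyclic system by the finite Fourier transform over a primitive~$m$-th root of unity~$\zeta\in k$: the transformed equations become~$\bigl(\sum_{\ell}\zeta^{-\chi\ell}e_\ell\partial_t^{\ell}\bigr)(\hat\alpha_\chi)=0$ for~$\chi\in\bZ/m\bZ$, where~$\hat\alpha_\chi=\sum_s\zeta^{-\chi s}\alpha_s\in k(t)$ is not identically zero for at least one~$\chi$. For~$\chi=0$ the transformed operator is~$L$ itself and we are finished; the delicate point—and the step I expect to require real work—is to show that one can always reduce to this~$\chi=0$ situation, that is, to exclude the possibility that only the properly twisted operators~$\sum_\ell\zeta^{-\chi\ell}e_\ell\partial_t^{\ell}$ with~$\zeta^{\chi}\neq 1$ acquire a rational solution. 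Lemma~\ref{LM:const} and Proposition~\ref{PROP:afrde} stand ready to certify that any algebraic solution thrown up by the construction is genuinely rational, and the~$q$-analogue would run in parallel with~$\bZ$-orbits replaced by~$q^\bZ$-orbits and a final trace descent from~$k(t^{1/n})$ down to~$k(t)$.
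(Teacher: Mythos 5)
Your reduction is set up correctly, and it is in fact more careful than the paper's own argument at the decisive point. The paper simply asserts that if $L$ telescopes $f$ then $L$ telescopes each single fraction $\alpha_{i,j}/(x-\beta_i)^j$ separately, exhibits $\alpha_{i,j}(t)S_t^{\mu_i}-\alpha_{i,j}(t+\mu_i)$ as the minimal telescoper of that fraction, and concludes by right divisibility; your cyclic system shows exactly why that decoupling assertion needs justification, since under $S_t$ the finitely many $\bZ$-orbits inside one cycle exchange residues with one another. (Your replacement of the witness $r(t)/x$ by $r(t)/(x-1)$ in case $(2)$ of the backward direction is also a genuine correction: $r(t)/x$ lies in $\Delta_{q,x}(k(t,x))$ by Lemma~\ref{LM:ratqsum}.)

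However, the step you flag as open---excluding the possibility that only a twisted slice operator acquires a rational solution---cannot be carried out, because that possibility really occurs: the theorem as stated is false, and your Fourier computation is precisely what exposes this. Take
\[
L = S_t+1, \qquad f=\frac{1}{x-\frac{t}{2}}-\frac{1}{x-\frac{t+1}{2}}\in k(t,x).
\]
The two poles lie in distinct $\bZ$-orbits and the discrete residues are $1$ and $-1$, so $f\notin\Delta_x(k(t,x))$ by Proposition~\ref{PROP:ratsum}. Yet
\[
L(f) \;=\; \frac{1}{x-\frac{t}{2}}-\frac{1}{x-\frac{t}{2}-1} \;=\; \Delta_x\!\left(\frac{1}{x-\frac{t}{2}-1}\right),
\]
so $L$ is a telescoper for $f$. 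But $y(t+1)+y(t)=0$ has no nonzero solution in $k(t)$: such a $y$ satisfies $S_t^2(y)=y$, hence $y\in k$ by Lemma~\ref{LM:const}, hence $y=-y=0$. In your notation this is exactly the degenerate case $\sum_s\alpha_s=0$ (here $m=2$, $\alpha_0=1$, $\alpha_1=-1$), where only the twisted slice $\sum_\ell(-1)^\ell e_\ell S_t^\ell = 1-S_t$ has the rational solution $1$. Note that the paper's own proof collapses on the same example, since $S_t+1$ is \emph{not} a telescoper for the single fraction $1/(x-t/2)$. So the ``real work'' you anticipated should be redirected: rather than forcing $\chi=0$, your analysis (run forwards and, for the converse, backwards through the cycle construction, taking $\alpha_s=\omega^s r(t)$) proves the corrected statement: $L=\sum_\ell e_\ell S_t^\ell$ is a telescoper for some $f\notin\Delta_x(k(t,x))$ if and only if there is a root of unity $\omega\in k$ such that $\sum_\ell \omega^\ell e_\ell S_t^\ell$ has a nonzero solution in $k(t)$, i.e., $L(y)=0$ has a solution of the form $\omega^t r(t)$ with $r\in k(t)$ nonzero; the $q$-case is analogous with $q^\bZ$-orbits.
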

\begin{proof}
Suppose that~$L(y(t))=0$ has a nonzero rational solution~$r(t)$ in~$k(t)$. Then~$L$ is a telescoper
for~$f=r(t)/x$ and~$f$ satisfies the assumption above.
For the opposite implication, we only prove the assertion for the first case, that is, when~$L$ and~$f$ satisfies the condition~$(1)$.
The remaining assertion follows in a similar manner.
Theorem~\ref{THM:dd} implies that the residual form~$a/b$ of~$f$ can be decomposed into
\[\frac{a}{b} = \sum_{i=1}^m \sum_{j=1}^{n_i} \frac{\alpha_{i, j}}{(x-\beta_i)^j},\]
where~$\alpha_{i, j}\in k(t)$ and~$\beta_i =\frac{\lambda_i}{\mu_i}t+c_i$ with~$c_i\in k$, $\lambda_i\in \bZ$
and~$\mu_i\in \bN$ such that~$\gcd(\lambda_i, \mu_i)=1$ and the~$\beta_i$ are in distinct~$\bZ$-orbits.
If~$L\in k(t)\langle D_t\rangle$ is a telescoper for~$f$, then~$L$ is a telescoper for~$a/b$.
Moreover, $L$ is a telescoper for each fraction~$f_{i, j} = {\alpha_{i, j}}/{(x-\beta_i)^j}$.
We claim that the
operator~$L_{i, j} := \alpha_{i,j}(t)S_t^{\mu_i} - \alpha_{i, j}(t+\mu_i)\in k(t)\langle D_t \rangle$ is the minimal
telescoper for~$f_{i, j}$ with respect to~$\Delta_x$. In fact, $L_{i, j}$ is a telescoper for~$f_{i,j}$
as shown in the proof of~Theorem~\ref{THM:dd}. It remains to show the minimality. Assume that there exists
a telescoper~$\tilde{L}_{i, j}$ of order less than~$\mu_i$ for~$f_{i, j}$.
Write~$\tilde{L}_{i, j}=\sum_{\ell=0}^{\mu_i-1} e_{\ell}S_t^{\ell}$. Then
\[\tilde{L}_{i, j}(f_{i, j}) = \sum_{\ell=0}^{\mu_i-1} \frac{e_{\ell}
 \alpha_{i, j}(t+\ell)}{(x-(\frac{\lambda_i}{\mu_i}t+\frac{\lambda_i}{\mu_i}\ell + c_i))^j}.\]
Since~$\gcd(\lambda_i, \mu_i)=1$ and~$\ell\in \{0, \ldots, \mu_i-1\}$,
the values~$\frac{\lambda_i}{\mu_i}t+\frac{\lambda_i}{\mu_i}\ell + c_i$ are in distinct~$\bZ$-orbits.
If~$\tilde{L}_{i, j}(f_{i, j})$ is rational summable, then all the residues~$e_{\ell}
 \alpha_{i, j}(t+\ell)$ are zero by Proposition~\ref{PROP:ratsum}. Since~$\alpha_{i,j} \neq 0$, we
have~$\tilde{L}_{i,j}$ is a zero operator. The claim holds. Since~$L$ is a telescoper for~$f_{i, j}$,
$L_{i, j}$ divides~$L$ on the right. Note that the rational function~$\alpha_{i, j}\in k(t)$
is a nonzero solution of~$L_{i, j}(y(t))=0$. Thus, $L$ has at least one nonzero rational solution in~$k(t)$.
\end{proof}


{\section*{Appendix}\label{SECT:appendix}
In this appendix, we present proofs of Propositions~\ref{PROP:afrde} and~\ref{PROP:aflqe}. Let~$K \subset E$ be difference fields of characteristic zero
with automorphism~$\sigma$ and assume that the constants
$E^\sigma$ of~$E$ are in~$K$.  Furthermore assume that~$E$ is
algebraically closed.

\begin{lemma}\label{LM:shiftclose}
Let~$u\in E$ be algebraic over~$K$
and assume that~$u$ satisfies a homogeneous linear difference
equation over~$K$. Then there exists a field~$F \subset E$
with~$\sigma(F) = F$, $K \subset F$,  $[F : K]< \infty$, and~$u \in F$.
\end{lemma}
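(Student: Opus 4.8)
The plan is to produce $F$ as the $K$-subalgebra of $E$ generated by finitely many shifts of $u$, and to show that the existence of a linear difference equation forces this algebra to be simultaneously a finite field extension of $K$ and stable under $\sigma$. The difference equation is precisely what prevents the $\sigma$-orbit of $u$ from generating an infinite extension, and algebraicity of $u$ is what upgrades the resulting vector space to a field.

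First I would record the difference equation in the form $\sum_{i=0}^{n} a_i\sigma^i(u)=0$ with $a_i\in K$ and $a_n\neq 0$, after discarding any vanishing top coefficients. Solving for the top shift gives
\[
\sigma^n(u) = -a_n^{-1}\sum_{i=0}^{n-1}a_i\,\sigma^i(u),
\]
so $\sigma^n(u)$ lies in the finite-dimensional $K$-vector space $V:=\operatorname{span}_K\{u,\sigma(u),\dots,\sigma^{n-1}(u)\}$. Applying $\sigma^{j}$ to this relation and inducting on $j$, every forward shift $\sigma^j(u)$ with $j\geq 0$ lies in $V$: each step re-expresses the new top shift in terms of the previous $n$ shifts (those with indices between the new index minus $n$ and the new index minus one), all already in $V$ by the induction hypothesis, using $\sigma(K)=K$ to keep the coefficients in $K$. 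Thus $V$ already contains the entire forward $\sigma$-orbit of $u$.

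Next I would set $B:=K[u,\sigma(u),\dots,\sigma^{n-1}(u)]$, the $K$-subalgebra of $E$ generated by a spanning set of $V$, so that $V\subseteq B$. Since $u$ is algebraic over $K$ and $\sigma^i(K)=K$, each generator $\sigma^i(u)$ is a root of the polynomial obtained by applying $\sigma^i$ to the minimal polynomial of $u$, hence is algebraic over $K$. A $K$-algebra generated by finitely many algebraic elements is a finite field extension of $K$, so $B$ is a field with $[B:K]<\infty$, $K\subseteq B\subseteq E$, and $u\in B$. It remains to check $\sigma(B)=B$. Here $\sigma(B)=K[\sigma(u),\dots,\sigma^{n}(u)]$; its generators $\sigma(u),\dots,\sigma^{n-1}(u)$ lie in $B$ by construction and $\sigma^n(u)\in V\subseteq B$ by the first step, so $\sigma(B)\subseteq B$. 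Because $\sigma$ is additive and satisfies $\sigma(Kb)=K\sigma(b)$ (as $\sigma(K)=K$), it carries a $K$-basis of $B$ to a $K$-basis of $\sigma(B)$, whence $\dim_K\sigma(B)=\dim_K B<\infty$; combined with $\sigma(B)\subseteq B$ this forces $\sigma(B)=B$. Taking $F:=B$ then meets all the requirements.

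The step I expect to require the most care is the $\sigma$-stability of $F$: since $\sigma$ restricts to a $\sigma|_K$-semilinear rather than $K$-linear map, the equality $\sigma(B)=B$ must be justified through the dimension count above rather than by a naive linear-algebra appeal, and the containment $\sigma(B)\subseteq B$ is what genuinely uses the difference equation (through $\sigma^n(u)\in V$). I would also remark that this lemma uses neither of the standing hypotheses that $E^\sigma\subseteq K$ nor that $E$ is algebraically closed; those are reserved for the deductions of Propositions~\ref{PROP:afrde} and~\ref{PROP:aflqe} that follow.
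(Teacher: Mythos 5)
Your proof is correct and rests on the same basic construction as the paper's: both take the subfield $F=K(u,\sigma(u),\ldots,\sigma^{n-1}(u))$ (your algebra $B$ coincides with this field, since its generators are algebraic over $K$) and apply powers of $\sigma$ to the recurrence to show that every forward shift $\sigma^{j}(u)$, $j\geq 0$, lies in $F$, giving $[F:K]<\infty$ and $\sigma(F)\subseteq F$. The one point where you genuinely diverge is in upgrading this containment to the equality $\sigma(F)=F$ required by the statement. The paper normalizes the recurrence so that its trailing coefficient $b_0$ is nonzero and inducts in the other direction, applying negative powers of $\sigma$ to the relation to conclude $\sigma^{-i}(u)\in F$ for all $i\geq 0$, whence $\sigma^{-1}(F)\subseteq F$ as well. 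You instead observe that $\sigma$ is $\sigma|_K$-semilinear, so it carries a $K$-basis of $B$ to a $K$-basis of $\sigma(B)$ (using $\sigma(K)=K$, which also justifies the paper's implicit use of $\sigma^{\pm 1}$-images of coefficients remaining in $K$); hence $\dim_K\sigma(B)=\dim_K B<\infty$, and the containment $\sigma(B)\subseteq B$ forces equality. Both arguments are sound: yours avoids the normalization $b_0\neq 0$ entirely (only a nonzero leading coefficient is needed), while the paper's is more explicit, exhibiting $\sigma^{-1}(u)$ directly as a $K$-linear combination of the shifts $u,\ldots,\sigma^{n-1}(u)$. Your closing remark is also accurate: neither the hypothesis $E^\sigma\subseteq K$ nor the algebraic closedness of $E$ is used in this lemma; they enter only in the subsequent deductions of Propositions~\ref{PROP:afrde} and~\ref{PROP:aflqe}.
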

\begin{proof}
Let $u$ satisfy
\begin{equation}\label{eqn1}
\sigma^n(u) + b_{n-1} \sigma^{n-1}(u) + \cdots + b_0u = 0
\end{equation}
with~$b_i \in K, b_0 \neq 0$ and let~$F = K(u, \sigma(u), \ldots , \sigma^{n-1}(u))$.
We have that~$[F:K] < \infty$ since for any~$i$, $\sigma^i(u)$ is algebraic over~$K$.
To see that~$\sigma(F) \subset F$ it is enough to show that~$\sigma^i(u) \in F$ for all~$i$.
This is certainly true for~$i = 0, \ldots n$. If~$i > n$,  apply~$\sigma^{i-n}$ to
equation~\eqref{eqn1} and proceed by induction to conclude~$\sigma^{i}(u) \in F$.
If~$i <0$ apply~$\sigma^{i}$ and proceed by induction to conclude~$\sigma^i(u) \in F$.
\end{proof}

\begin{lemma}\label{LM:aflre1}
Let~$K = k(t)$, where~$k$ is algebraically closed.
Let~$(E, \sigma)$ be a difference field such that~$K \subset E$, $\sigma(t) = t+1$
and~$[E : K] < \infty$. The~$E = K$.
\end{lemma}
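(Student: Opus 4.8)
The plan is to regard $E$ as an algebraic function field of one variable over $k$ and to exploit the fact that $\sigma$ restricts to the shift on $K$. First I would record the structural facts. Since $\sigma$ extends $S_t$, it fixes $k$ pointwise, so $\sigma(K)=K$ and $\sigma$ is a $k$-automorphism of $E$; because $k$ is algebraically closed of characteristic zero it is exactly the field of constants of $E$, and $E/K$ is finite and separable of degree $n:=[E:K]$. The goal is to prove $n=1$.

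The heart of the argument is a ramification analysis. I would consider the places of $K=k(t)$: the finite places correspond to the points $t=a$ with $a\in k$, and there is one place at infinity. Only finitely many places of $K$ ramify in $E$ (separability in characteristic zero). Since $\sigma$ is a $k$-automorphism carrying $E$ onto itself, it permutes the places of $E$ and hence the places of $K$, preserving ramification indices; thus the finite branch locus $S\subset K$ is stable under $\sigma$ and under all its powers. But on finite places the induced action of $\sigma$ is the translation $(t=a)\mapsto(t=a+1)$, whose orbits are all infinite. A finite $\sigma$-stable set of finite places must therefore be empty, so $E/K$ is unramified over every finite place, i.e.\ ramified at most over $t=\infty$.

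I would then finish with the Riemann--Hurwitz formula. Writing $g_E$ for the genus of $E$ and letting $s\ge 1$ be the number of places of $E$ lying over $\infty$, with ramification indices $e_1,\dots,e_s$ and $\sum_i e_i=n$ (residue degrees are all $1$ since $k$ is algebraically closed), tameness in characteristic zero gives different degree $\sum_i(e_i-1)=n-s$. Hence
\[
2g_E-2 \;=\; n\,(2\cdot 0-2)+(n-s)\;=\;-\,n-s .
\]
Since $g_E\ge 0$ and $s\ge 1$, this yields $0\le 2g_E = 2-n-s\le 1-n$, forcing $n\le 1$. Therefore $n=1$ and $E=K$, as claimed.

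The step I expect to be the main obstacle is making the ramification transport fully rigorous: one must check carefully that $\sigma$ induces a well-defined, ramification-index-preserving bijection on places that restricts to a nontrivial integer translation on the finite places of $k(t)$, and that $k$ is genuinely the constant field so that residue degrees equal $1$ and $\sum_i e_i=n$. Conceptually, everything after the branch-locus step is just the statement that the affine line $\mathbb{A}^1_k$ admits no nontrivial connected finite \'etale cover in characteristic zero; I would either invoke this directly or, as above, deduce it from Riemann--Hurwitz in order to keep the appendix self-contained.
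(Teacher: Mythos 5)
Your proof is correct and is essentially the paper's own argument: both show the branch locus is a finite $\sigma$-stable set of places of $K$, hence contained in the place at infinity (since translation orbits of finite places are infinite), and then apply Riemann--Hurwitz with $\sum_i e_i = n$ to get $2g_E-2=-n-s$, forcing $g_E=0$ and $n=s=1$. The only cosmetic difference is that you note the \'etale-cover interpretation as an aside, which the paper does not.
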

\begin{proof}
Let~$n = [E:K]$ and~$g$ be the genus of~$E$.
The Riemann-Hurwitz formula (see~\cite[p.\ 106]{Chevalley1951} or~\cite[p.\ 125]{Fulton1989})
yields
\begin{equation}\label{RHfmla}
2g-2  =  -2n + \sum_P(e(P) - 1),
\end{equation}
where the sum is over all places~$P$ of~$E$ and~$e(P)$ is the ramification index of~$P$ with
respect to~$K$.
There are only a finite number of places~$Q$ of~$K$ over
which places of~$E$ ramify and the automorphism~$\sigma$ leaves the set of
such places invariant. On the other hand, the only finite set of
places of~$K$ that is left invariant by~$\sigma$ is the place at
infinity.  Therefore, if~$P$ is a place of~$E$ with~$e(P) > 1$,
then~$P$ lies above the place at infinity. Note that for any place~$Q$
of~$K$, Theorem~1 of~\cite[p.\ 52]{Chevalley1951} implies (under our
assumptions) that
\begin{equation}\label{ramsum}
\sum_{\mbox{$P$ lies above~$Q$}} e(P) = n.
\end{equation}
Therefore we have
\begin{eqnarray*}
2g-2 & = & -2n + \sum_{\mbox{$P$ lies above~$\infty$}}(e(P) - 1)\\& =&
-2n +n-t \\
& =& -n-t,
\end{eqnarray*}
where~$t$ is the number of
places above infinity. Since~$n$ and~$t$ are both positive integers
and~$g$ is nonnegative, we must have~$g=0$ and~$n = t = 1$.
In particular, since~$n = 1$, we have~$E= K$.
\end{proof}

\noindent{\underline{\emph{Proof of Proposition~\ref{PROP:afrde}.}}
Suppose that~$\alpha(t)$ satisfies the linear recurrence relation
\[S_t^n(\alpha) + a_{n-1} S_t^{n-1} (\alpha) + \cdots + a_0 \alpha = 0,\]
where~$a_i\in k(t)$. By Lemma~\ref{LM:shiftclose}, the
field~$E = k(t)(\alpha, S_t(\alpha), \ldots, S_t^{n-1}(\alpha)) \subset \overline{k(t)}$
is a difference field extension of~$k(t)$. Since~$[E:k(t)]<\infty$, $E=k(t)$ by Proposition~\ref{LM:aflre1}.
Thus~$\alpha\in k(t)$. \hfill $\Box$

\begin{remark} Proposition~\ref{PROP:afrde} has been shown in~\cite[Theorem 1]{Benzaghou1992},~\cite[Prop.\ 4.4]{vdPutSinger1997}
and~\cite[Theorem 5.2]{Bell2008}. The proof in~\cite[Theorem 5.2]{Bell2008} is based on
analytic properties of algebraic functions.\\[0.1in]
In this proposition, we assume that~$\alpha(t)$ satisfies a polynomial equation over~$k(t)$ and \underline{lies in a field}. This latter condition cannot be weakened without weakening the conclusion. For example, the sequence~$y=(-1)^n$ satisfies $y^2-1 = 0$ but~$k(t)[y]$ is a ring with zero divisors.  The above references give a complete characterization of sequences satisfying both linear recurrences and polynomial equations.
\end{remark}

The following result is a~$q$-analogue of Lemma~\ref{LM:aflre1}.
\begin{lemma}\label{LM:aflqre1}
Let~$K = k(t)$, where~$k$ is algebraically closed. Let~$(E, \sigma)$ be
a difference field such that~$K \subset E$, $\sigma(t) = qt$ with~$q \in k\setminus\{0\}$ and
not a root of unity, and~$[E:K] < \infty$. Then~$E = k(t^{{1}/{n}})$ for some
positive integer~$n$.
\end{lemma}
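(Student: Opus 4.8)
The plan is to follow the proof of Lemma~\ref{LM:aflre1} almost verbatim, the only change being that the single $\sigma$-invariant place of $k(t)$ under $t\mapsto t+1$ is now replaced by the \emph{two} invariant places $t=0$ and $t=\infty$ under $t\mapsto qt$. First I would apply the Riemann--Hurwitz formula \eqref{RHfmla} to the finite extension $K=k(t)\subset E$ of degree $n$, observing that since $k$ is algebraically closed all residue degrees equal $1$, so the fundamental identity \eqref{ramsum}, namely $\sum_{P\mid Q}e(P)=n$, holds for every place $Q$ of $K$. As before only finitely many places of $K$ ramify in $E$, and because $\sigma$ is a field automorphism of $E$ carrying ramified places to ramified places with the same ramification index, this finite set is invariant under the induced action of $\sigma$ on the places of $K$. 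The crucial structural point, and the only place where the $q$-hypothesis enters, is that under $t\mapsto qt$ every place $t=a$ with $a\neq 0,\infty$ has the infinite orbit $\{\,t=aq^{i}: i\in\bZ\,\}$ (as $q$ is not a root of unity), so the only finite $\sigma$-invariant set of places of $k(t)$ is contained in $\{0,\infty\}$. Hence ramification can occur only over $0$ and over $\infty$.

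Next I would carry out the Euler-characteristic count. Writing $r$ and $s$ for the numbers of places of $E$ lying above $0$ and above $\infty$ respectively, identity \eqref{ramsum} gives $\sum_{P\mid 0}(e(P)-1)=n-r$ and $\sum_{P\mid\infty}(e(P)-1)=n-s$, while all remaining summands vanish, so \eqref{RHfmla} becomes
\[
2g-2 \;=\; -2n+(n-r)+(n-s)\;=\;-r-s.
\]
Since $g\geq 0$ and $r,s\geq 1$, this forces $r=s=1$ and $g=0$. Thus $E$ has genus zero, and there is a unique place above $0$ and a unique place above $\infty$, each totally ramified with ramification index $n$.

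Finally, from $g=0$ (together with the automatic existence of a rational place over the algebraically closed $k$) I would conclude that $E=k(u)$ for some $u$ transcendental over $k$, and view the inclusion $k(t)\subset k(u)$ as a degree-$n$ morphism $t=\phi(u)$ of $\mathbb{P}^1$. After a M\"obius change of coordinate in $u$ I may assume the unique place over $t=0$ is $u=0$ and the unique place over $t=\infty$ is $u=\infty$; total ramification then says that $\phi$ has a zero of order $n$ supported only at $u=0$ and a pole of order $n$ supported only at $u=\infty$, so $\operatorname{div}(\phi)=\operatorname{div}(u^n)$ and hence $\phi(u)=cu^n$ for some $c\in k^\times$. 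Therefore $u^n=c^{-1}t$, and since $c^{-1/n}\in k$ we obtain $E=k(u)=k(t^{1/n})$, as claimed. The bulk of the argument is routine once the place-counting is set up; I expect the main obstacle to be purely the bookkeeping of this last step, namely correctly normalizing the coordinate $u$ and arguing that a rational function on $\mathbb{P}^1$ with a single zero and single pole, each of full order $n$, is a scalar multiple of $u^n$, while checking throughout that the identification stays defined over $k$.
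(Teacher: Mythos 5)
Your proposal is correct and follows essentially the same route as the paper's proof: the Riemann--Hurwitz formula together with the $\sigma$-invariance of the (finite) set of ramified places of $k(t)$ forces $g=0$ with a single, totally ramified place over each of $t=0$ and $t=\infty$. The only difference is in packaging the endgame: where you invoke the rationality of a genus-zero function field over the algebraically closed $k$ and normalize a coordinate $u$ so that $t=cu^n$, the paper constructs the generator directly via Riemann's theorem (producing $y\in E$ with divisor $P_0P_\infty^{-1}$, whence $y^nt^{-1}\in k$ and $[E:k(y)]=1$), which is precisely the standard proof of the rationality fact you cite.
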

\begin{proof}
Let~$[E:K] = n$ and~$g$ be the genus of~$E$. We again
consider the set of places of~$K$ over which places of~$E$ ramify.
This set is left invariant by~$\sigma$ and so must be a subset of
the set containing the place at~$0$ and the place at~$\infty$.
Therefore, ramification can occur only at~$0$ and~$\infty$.
Equations~\eqref{RHfmla} and~\eqref{ramsum} imply
\begin{eqnarray*}
2g-2 & = & -2n + \sum_{\mbox{~$P$ lies above~$0$}}(e(P) - 1) +
\sum_{\mbox{$P$ lies above~$\infty$}}(e(P) - 1)\\
 &=&-2n +2n-t_0-t_\infty  \\ &=& -t_0-t_\infty
\end{eqnarray*}
where~$t_0, t_\infty$ are the number of places above~$0$ and~$\infty$.
Since~$t_0$ and~$t_\infty$ are positive and~$g$ is
nonnegative, we must have that~$g=0$ and~$t_0 = t_\infty = 1$.
Therefore, $E$ has one place~$P_0$ over~$0$ with~$e_{P_0} = n$ and
one place~$P_\infty$ over~$\infty$ with~$e_{P_\infty} = n$. Writing
divisors multiplicatively, Riemann's Theorem (\cite[p.\ 22]{Chevalley1951}) implies that
\begin{equation*}
l(P_0P^{-1}_\infty)  \geq  d(P_0P^{-1}_\infty) -g+1 = 0-0+1 = 1
\end{equation*}
where~$l(P_0P^{-1}_\infty)$ is the dimension of the space of
elements of~$E$ which are~$\equiv 0 \mod {P_0P^{-1}_\infty}$.
Note that since the degree~$P_0P^{-1}_\infty$ is~$0$, this latter condition
implies that any such element has~$P_0P^{-1}_\infty$ as its divisor.
Therefore, there exists an element~$y \in E$ whose divisor is~$P_0P^{-1}_\infty$.
Note that the element~$t$ has divisor~$P_0^nP^{-n}_\infty$ and therefore~$y^nt^{-1}$
must be in~$k$. Therefore~$y = c\, t^{{1}/{n}}$ for some~$c \in k$.  Finally,
Theorem 4 of~\cite[p.\ 18]{Chevalley1951} states that~$[E:k(y)]$
equals the degree of the divisor of zeros of~$y$, that is,
$[E:k(y)] = 1$.  Therefore~$ E = k(y) = k(t^{{1}/{n}})$.
\end{proof}

\noindent{\underline{\emph{Proof of Proposition~\ref{PROP:aflqe}.}}
Suppose that~$\alpha(t)$ satisfies the linear $q$-recurrence relation
\[Q_t^n(\alpha) + a_{n-1} Q_t^{n-1} (\alpha) + \cdots + a_0 \alpha = 0,\]
where~$a_i\in k(t)$. By Lemma~\ref{LM:shiftclose}, the
field~$E = k(t)(\alpha, Q_t(\alpha), \ldots, Q_t^{n-1}(\alpha)) \subset \overline{k(t)}$
is a difference field extension of~$k(t)$. Since~$[E:k(t)]<\infty$, $E=k(t^{1/n})$
by Lemma~\ref{LM:aflqre1}.
Thus~$\alpha\in k(t^{1/n})$.} \hfill $\Box$


\bibliographystyle{plain}

\end{document}